\author[A.V. Bolsinov, V.S. Matveev, T. Mettler, and S. Rosemann]{Alexey V. Bolsinov, Vladimir S. Matveev, Thomas Mettler, and Stefan Rosemann}
\numberwithin{equation}{section}
\newcommand{\grad}{\operatorname{grad}}
\newcommand{\be}{\begin{equation}}
\newcommand{\ee}{\end{equation}}
\newcommand{\weg}[1]{}
\newcommand{\bq}{\begin{equation}}
\newcommand{\eq}{\end{equation}}
\renewcommand{\i}{\mathrm{i}}
\newcommand{\R}{\mathbb{R}}
\newcommand{\C}{\mathbb{C}}
\newcommand{\tr}{\operatorname{tr}}
\newcommand{\Id}{\mathrm{Id}}
\newcommand{\gnabla}[0]{\tensor[^g]{\nabla}{}}
\newtheorem{thm}{Theorem}[section]
\newtheorem{lem}[thm]{Lemma}
\newtheorem{cor}[thm]{Corollary}
\newtheorem{prop}[thm]{Proposition}
\theoremstyle{definition}
\theoremstyle{remark}
\newtheorem{remark}[thm]{Remark}
\keywords{K\"ahler geometry, c-projective geometry, Hamiltonian 2-forms.}
\date{October 31, 2013.}
\thanks{TM thanks Forschungsinstitut f\"ur Mathematik (\textsc{fim}) at ETH Z\"urich and Schweizerischer Nationalfonds \textsc{snf}  for financial support via the postdoctoral fellowship PA00P2\textunderscore 142053.  VM and SR thank  DFG  and FSU Jena for partial financial support. AB and TM are grateful to FSU Jena for financial support for several trips to Jena where a part of the writing for this article took place}
\address{Department of Mathematical Sciences,
 Loughborough University,
 LE11 3TU, UK }
 \email{A.Bolsinov@lboro.ac.uk} 
\address{Institute of Mathematics, Friedrich-Schiller-Universit\"at Jena, Jena, Germany.}
\email{vladimir.matveev@uni-jena.de}
\address{Mathematical Institute, University of Oxford, UK.}
\email{mettler@maths.ox.ac.uk}
\address{Institute of Mathematics, Friedrich-Schiller-Universit\"at Jena, Jena, Germany.}
\email{stefan.rosemann@uni-jena.de}
\title[K\"ahler surfaces admitting c-projective vector fields]{Four-dimensional K\"ahler metrics \\admitting c-projective vector fields}
\begin{document}

\begin{abstract}
A vector field on a K\"ahler manifold is called c-projective if its flow preserves the $J$-planar curves. We give a complete local classification of K\"ahler real 4-dimensional manifolds  that  admit an essential  c-projective vector field. {An important technical step is a local description of 4-dimensional c-projectively equivalent metrics of arbitrary signature. As an  application of our results we  prove   the natural analog of the classical Yano-Obata conjecture in the 
    pseudo-Riemannian  4-dimensional case.} 
\end{abstract}
\maketitle

\section{Introduction}
\subsection{Definitions, results and motivation}

Let $(M, J, \nabla)$ be a real $2n$-dimensional smooth manifold equipped with a complex structure $J\in \textrm{End}(TM)$ which is parallel w.r.t.~a torsion-free affine connection $\nabla$.
A \emph{$J$-planar curve}  is a regular curve $\gamma:I\subseteq \mathbb{R} \to M$
 such that  the $2$-plane spanned by $\dot{\gamma}$ and $J \dot\gamma$ is parallel along $\gamma$, i.e., \begin{align}
\nabla_{\dot{\gamma}}\dot{\gamma}\wedge\dot{\gamma}\wedge J\dot{\gamma}=0.  \label{eq:gengeo}
\end{align}
In the literature, $J$-planar curves are also called \emph{holomorphically planar} or simply $h$-\emph{planar} curves. The notion of $J$-planar curves is an analog of the notion of geodesics, since  geodesics could be defined as regular curves  satisfying
$$
\nabla_{\dot{\gamma}}\dot{\gamma}\wedge\dot{\gamma}=0.
$$
Note that contrary to geodesics, at every point and in every direction there exist infinitely many $J$-planar curves.

A vector field $v$ on  $(M,J,\nabla)$ is called \textit{c-projective} if its (possibly locally defined) flow maps $J$-planar curves to $J$-planar curves. It is easy to see that such a vector field $v$ {automatically } preserves the complex structure. The set of c-projective vector fields with respect to  $(\nabla,J)$  forms a finite-dimensional  real Lie algebra; the set of affine (i.e., $\nabla$- and $J$-preserving) vector fields form a  subalgebra of this Lie algebra.

{
 \begin{remark} Most classical  sources  use the name ``h-projective'' or ``holomorphically-projective'' for what we call ``c-projective''  in our paper. We also used ``h-projective'' in our previous publications \cite{MR2948791,MR2998672}. 
  Recently a group of geometers studying c-projective geometry  from different viewpoints decided to change the name from h-projective to c-projective, since a c-projective change of connections (see \eqref{eq:christtrafo}), though being complex in the natural sense,  is generically not holomorphic. \weg{ The prefix  ``c-'' is chosen to be reminiscent of ``complex-'' but is not supposed
to be pronounced nor regarded as such since ``complex projective'' is
already used differently in the literature.}
 \end{remark} }

In this article we study essential  c-projective vector fields of Levi-Civita connections of  metrics of arbitrary signature. We assume that the metric $g$ is  hermitian  w.r.t.~to $J$, i.e., $g(J., .)$ is  skew-symmetric which, in view of $\nabla^g J=0$ implies that $g$ is K\"ahler.   
  The word \emph{essential} above means that  the vector field is not an affine vector field, i.e., it does not preserve the Levi-Civita connection. In the four-dimensional {Riemannian}  case, a vector field $v$ being affine for a K\"ahler metric $g$ implies one of the following possibilities: 1) $v$ is Killing for $g$; 2) $v$ is a homothety vector field for $g$; 3) the manifold is locally the direct product of two surfaces with a definite metric on each of them and $v$ is the direct sum of homothetic vector fields on the components; 4) $g$ is flat.  In view of this fact, all nonessential c-projective vector fields are easy to describe, at least for Riemannian metrics.

Note also that if we forget about the metrics and speak about the local description of c-projective vector fields for $(M^4, J, \nabla)$ (where $\nabla$ is an affine torsion-free connection preserving $J$), then the local   description almost everywhere is fairly simple. Indeed, in a coordinate system $x^1,...,x^4$ such that the c-projective vector field is given by $v=\tfrac{\partial }{\partial x^1}$ the Christoffel symbols $\Gamma_{jk}^i$ of  any   connection $\nabla$
  {such that  $v$ is a c-projective vector field for $(J,\nabla)$} are given  by
 $$
 \Gamma^i_{jk}=\check\Gamma^i_{jk}+ \phi_j \delta^i_k + \phi_j \delta^i_k - \phi_s J^s_{\ j} J^i_{\ k}  - \phi_s J^s_{\ k} J^i_{\ j},
 $$
  where the functions $\check\Gamma^i_{jk}$ do not  depend on the variable $x^1$ and the functions $\phi_i$ are arbitrary.

The main result of our paper, see Theorems  \ref{thm:normalformwithv} and \ref{thm:allmetricsintheclass}, is a complete local  description of the triples $(g,J,v)$  (K\"ahler metric, complex structure, essential c-projective vector field)
in a neighbourhood of almost every point in the four-dimensional case. Although the precise statements are slightly lengthy we indeed provide an explicit description of the components of the metric, of the K\"ahler form $\omega(.,.)= g(J., .)$ and of the c-projective vector field in terms of elementary functions. The parameters in this description are almost  arbitrary numbers $\beta, c,C, c_1, c_2, d_1,d_2$ and, in certain cases, almost arbitrary smooth  nonzero function $G$ of one variable.

An important step in this article (that may be viewed as a separate result) is a local description of {\it c-projectively equivalent} (i.e., sharing the same $J$-planar curves) four-dimensional split-signature K\"ahler metrics. In the case where $g$ is positive definite, such a description is a special case of the results of Apostolov et al \cite{MR2228318}  on Hamiltonian $2$-forms, we give some  details in \S \ref{sec:localclass}.

Despite the fact that the problem of classifying c-projectively equivalent  pseudo-Riemannian K\"ahler metrics is interesting on its own, we have another motivation for allowing the metrics to have arbitrary signature: the method how we obtained the  description of the metrics admitting essential c-projective vector fields in fact requires the description of c-projectively equivalent  K\"ahler metrics of arbitrary signature.

We had the following  motivation to study this problem. First of all, the problem is classical. The notions of $J$-planar curves,  c-projective vector fields and  c-projectively equivalent metrics were introduced by Otsuki and Tashiro~\cite{MR0066024,MR0087181}. For a certain period of time  this theory was one of the main research directions of the Japanese ($\sim$1950-1970) and Soviet ($\sim$1970-1990) differential geometric schools. There are many publications and results in this theory  and especially in the theory of c-projective vector fields,  see for example  \cite{MR2998672} for a list of references.  Most  (actually, all we have found) results in the theory of essential c-projective vector fields are negative: under certain assumptions on the geometry of the manifold,  it is proved that an essential c-projective vector field cannot exist. As far as we know, before our paper there  existed  no explicit  examples of essential c-projective vector fields on  K\"ahler  manifolds  of nonconstant holomorphic curvature. Our paper provides all possible
examples in dimension four. Furthermore, using the methods of our paper it is possible to construct an example for every dimension and every signature of the metric. This problem will be studied in a forthcoming article.

The second motivation is that the problem is a natural complex version of a problem posed by  Sophus Lie in 1882. In \cite{Lie}, Sophus Lie explicitly asked to describe all two-dimensional metrics admitting projective vector fields. Recall that a vector field is \emph{projective} if its local flow sends unparametrised geodesics to unparametrised geodesics. From the context it is clear that S.~Lie stated the problem in the local setup and allowed pseudo-Riemannian metrics. In this setting, it has been solved only recently in \cite{MR2368987,MR2892455}. The problem we solve in this paper is, in fact,  just  the Lie problem if we replace ``geodesics'' by ``$J$-planar curves''.
The main idea of our approach is borrowed from \cite{MR2892455} and was also implicitly used in \cite{MR2368987}. A main difficulty in implementing this idea to the c-projective setting  was the lack of a local description of c-projectively equivalent K\"ahler  metrics of indefinite signature. As mentioned above, even if we are interested in Riemannian metrics only, the approach borrowed from \cite{MR2892455} requires the local description of c-projectively equivalent metrics of all signatures, see \S \ref{trick} for details.

A third motivation is that there is a recent interest in the theory of c-projectively equivalent metrics from the side of K\"ahler geometers. It appears that c-projectively equivalent K\"ahler metrics correspond to Hamiltonian two-forms introduced and  studied in \cite{MR2228318,MR2144249, MR2425136, MR2411469}. In this context, the existence of an essential c-projective vector field is a natural geometric condition and we plan to study the possible implications of metrics admitting such a vector field in the future.
\weg{
Fourth, two-dimensional metrics whose algebra of (real) projective vector fields is of submaximal dimension are called Darboux-superintegrable. These metrics are of considerable interest in mathematical physics and differential geometry. We hope that certain metrics from our list will also be of interest to the mathematical physics community, in particular the metrics from the L1-family (see Theorem \ref{thm:normalformwithv}). Furthermore, c-projectively homogeneous four-dimensional K\"ahler metrics might be of interest as well in the context of mathematical physics. }

{
Finally, a main motivation was to prove the natural pseudo-Riemannian generalisation of the Yano-Obata conjecture in the $4$-dimensional case (see Theorem  \ref{YOconjecture} below). }

\subsection{Main theorems}
Assume $(M^4, g, J)$ is a   K\"ahler surface (i.e., a real four-dimensional K\"ahler manifold) of arbitrary signature.
Let  $\hat{g}$ be a $J$-hermitian metric on $(M,J)$. We call the metrics $g$ and $\hat{g}$  \emph{c-projectively equivalent}, if $J$-planar curves  of $g$ are $J$-planar curves of $\hat g$. This condition automatically implies that $(M^4, \hat{g}, J)$ is also K\"ahler (which easily follows from the relation \eqref{eq:christtrafo} below).

If a vector field $v$ is c-projective for $(g,J)$, then the pullback of $g$ with respect to the flow of $v$ is a K\"ahler metric that is c-projectively equivalent to $g$. Moreover, $v$ is a c-projective vector field for every metric $\hat{g}$ in the equivalence class $[g]$ of all metrics that are c-projectively equivalent to $g$. We call $v$ essential for the class $[g]$, if $v$ is essential for some metric $\hat{g}\in [g]$.

Our first theorem classifies all local c-projective equivalence classes $[g]$ in (real) four dimensions admitting an  essential c-projective vector field $v$.

\begin{thm}\label{thm:normalformwithv}
Let $(M,g_0,J)$ be a K\"ahler surface  of non-constant holomorphic curvature admitting an essential c-projective vector field $v$. Then in a neighbourhood of almost every point of $M$ there are local coordinates such that a certain metric $g \in [g_0]$, its K\"ahler form $\omega=g(J.,.)$ and $v$ are given by one of the cases L1--L4,CL1--CL4,D1--D3 described below.

Liouville case: There are coordinates $x,y,s,t$ and functions $\rho(x)$, $\sigma(y)$, $F(x)$, $G(y)$ of one variable such that $g$ and $\omega$ take the form
\begin{align}
\begin{array}{c}
g=(\rho-\sigma)(F^2 dx^2+\epsilon G^2 dy^2)+\frac{1}{\rho-\sigma}\left[\left(\frac{\rho'}{F}\right)^2\left(ds+\sigma dt\right)^2+\epsilon\left(\frac{\sigma'}{G}\right)^2\left(ds+\rho dt\right)^2\right],\vspace{1mm}\\
\omega=\rho'dx\wedge (ds+\sigma dt)+\sigma'dy\wedge ( ds+\rho dt),
\end{array}\nonumber
\end{align}
where $\epsilon=1$ in case of positive signature and $\epsilon=-1$ in case of split signature. The functions $\rho,\sigma,F,G$ and $v$ are as in the cases L1--L4 below.

\begin{itemize}
\item Case L1: $\rho(x)=x,\sigma(y)=y,F=c_1,G=c_2$ and $v=\partial_{x}+\partial_{y}-t\partial_s,$ where $c_1,c_2$ are constants.

\item Case L2: $\rho(x)=c_1\mathrm{e}^{(\beta-1)x},\sigma(y)=c_2\mathrm{e}^{(\beta-1)y},F(x)=d_1\mathrm{e}^{-\frac{1}{2}(\beta+2)x},G(y)=d_2\mathrm{e}^{-\frac{1}{2}(\beta+2)y}$ and $v=\partial_{x}+\partial_{y}-(\beta+2)s\,\partial_s-(2\beta+1)t\,\partial_t,$ where $\beta\neq 1$ and $c_1,c_2,d_1,d_2$ are constants.

\item Case L3: $\rho(x)=x,\sigma(y)=y,F(x)=c_1e^{-\frac{3}{2}x},G(y)=c_2e^{-\frac{3}{2}y}$ and $v=\partial_{x}+\partial_{y}-(3s+t)\partial_s-3t\,\partial_t,$ where $c_1,c_2$ are constants.

\item Case L4: $\rho(x)=-\tan(x),\sigma(y)=-\tan(y),F(x)=\frac{c_1\mathrm{e}^{-\frac{3}{2}\beta x}}{\sqrt{|\cos(x)|}},G(y)=\frac{c_2\mathrm{e}^{-\frac{3}{2}\beta y}}{\sqrt{|\cos(y)|}}$ and $v=\partial_{x}+\partial_{y}-(3\beta\, s-t)\partial_s-(s+3\beta\, t)\partial_t,$ where $\beta,c_1,c_2$ are constants.

\end{itemize}

Complex Liouville case: This case occurs in split signature only. There are coordinates  $z=x+\i y,s,t$ and holomorphic functions $\rho(z),F(z)$ such that $g$ and $\omega$ take the form
\begin{align}
\begin{array}{c}
g=\frac{1}{4}(\bar{\rho}-\rho)(F^2 dz^2-\bar{F}^2 d\bar{z}^2)+\frac{4}{\rho-\bar{\rho}}\left[\left(\frac{1}{\bar{F}}\frac{\partial\bar{\rho}}{\partial\bar{z}}\right)^2\left(ds+\rho\,dt\right)^2-\left(\frac{1}{F}\frac{\partial \rho}{\partial z}\right)^2\left(ds+\bar{\rho}\,dt\right)^2\right],\vspace{1mm}\\
\omega=\frac{\partial \rho}{\partial z}dz\wedge (ds+\bar{\rho}dt)+\frac{\partial \bar{\rho}}{\partial \bar{z}}d\bar{z}\wedge (ds+\rho dt).
\end{array}\nonumber
\end{align}
The functions $\rho,F$ and $v$ are as in the cases CL1--CL4 below.

\begin{itemize}
\item Case CL1: $\rho(z)=z,F(z)=c_1+\i c_2$ and $v=\partial_z+\partial_{\bar{z}}- t\,\partial_s,$ where $c_1,c_2$ are constants.

\item Case CL2: $\rho(z)=e^{(\beta-1)z},F(z)=(c_1+\i c_2)e^{-\frac{1}{2}(\beta+2)z}$ and $v=\partial_z+\partial_{\bar{z}}-(\beta+2)s\,\partial_s-(2\beta+1)t\,\partial_t,$ where $\beta\neq 1$ and $c_1,c_2$ are constants.

\item Case CL3: $\rho(z)=z,F(z)=(c_1+\i c_2)\mathrm{e}^{-\frac{3}{2}z}$ and $v=\partial_w+\partial_{\bar{w}}-(3s+t)\partial_s-3t\,\partial_t,$ where $c_1,c_2$ are constants.

\item Case CL4: $\rho(z)=-\tan(z),F(z)=\frac{(c_1+\i c_2)\mathrm{e}^{-\frac{3}{2}\beta z}}{\sqrt{\cos(z)}}$ and $v=\partial_z+\partial_{\bar{z}}-(3\beta\, s- t)\partial_s-(s+3\beta\,t)\partial_t,$ where $\beta,c_1,c_2$ are constants.

\end{itemize}

Degenerate case: There are coordinates $x,t,u_1,u_2$, functions $\rho(x),F(x)$ of one variable and a positive or negative definite $2D$ K\"ahler structure $(h,j,\Omega=h(j.,.))$ on the domain $\Sigma\subseteq \mathbb{R}^2$ of $u_1,u_2$ such that $g$ and $\omega$ take the form
\begin{align}
\begin{array}{c}
\hat{g}=-\rho h+\rho F^2 d x^2+\frac{1}{\rho}\left(\frac{\rho'}{F}\right)^2\theta^2,\,\,\,\hat{\omega}=-\rho \Omega+\rho'dx\wedge\theta,
\end{array}
\nonumber
\end{align}
where $\theta=dt-\tau$ and $\tau$ is a one-form on $\Sigma$ satisfying $d\tau=\Omega$. The functions $\rho,F$ and the forms $h,\tau$ are as in the cases D1--D3 below.

\begin{itemize}

\item Case D1: $\rho(x)=\tfrac{1}{x}$, $F(x)=\tfrac{c_1}{\sqrt{|x|}}$, $\tau=u_1 du_2$, $h=G(u_2)du_1^2+\tfrac{du_2^2}{G(u_2)}$ and $v=\partial_x+u_2\partial_t +\partial_{u_1}$, where $c_1$ is a constant and $G(u_2)$ is an arbitrary function.

\item Case D2: $\rho(x)=c_1e^{(\beta-1)x}$, $F(x)=d_1e^{-\frac{1}{2}(\beta+2)x}$ for certain constant $\beta\neq 1$, where $c_1,d_1$ are constants
\begin{itemize}
\item Subcase $\beta+2=0$: $\tau=u_1du_2$, $h=G(u_2)du_1^2+\frac{du_2^2}{G(u_2)}$ and $v=\partial_x+u_2 \partial_t +\partial_{u_1}$,
\item Subcase $\beta+2\neq 0$: $\tau =-\tfrac{1}{\beta+2}e^{-(\beta+2)u_1}G(u_2)du_2$, $h=e^{-(\beta+2)u_1}G(u_2)(du_1^2+du_2^2)$ and $v=\partial_x-(\beta+2)t\partial_t+\partial_{u_1}$,
\end{itemize}
where $G(u_2)$ is an arbitrary function.
\item Case D3: $\rho(x)=\tfrac{1}{x}$, $F(x)=\tfrac{c_1 e^{-\frac{3}{2}x}}{\sqrt{|x|}}$, $\tau=-\tfrac{1}{3}e^{-3 u_1}G(u_2)du_2$, $h=e^{-3 u_1}G(u_2)(du_1^2+du_2^2)$ and $v=\partial_x-3t\partial_t+\partial_{u_1}$, where $c_1$ is a constant and $G(u_2)$ is an arbitrary function.

\end{itemize}

Conversely, given local coordinates on $\mathbb{R}^4$ and $(g,\omega,v)$ as in one of the above cases, $(g,\omega)$ defines a K\"ahler structure (whenever the formulas make sense) and $v$ is an essential c-projective vector field for $[g]$.
\end{thm}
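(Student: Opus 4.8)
The plan is to combine the structural classification of the c-projective class $[g_0]$ with an analysis of the linear action induced by $v$ on the finite-dimensional space of solutions of the c-projective mobility equation, in direct analogy with the treatment of the two-dimensional projective Lie problem in \cite{MR2892455}. First I would encode a metric $\hat g\in[g_0]$ by the associated $g_0$-selfadjoint, $J$-commuting endomorphism $A=A(g_0,\hat g)$; these $A$ are exactly the solutions of the linear first-order system governing c-projectively equivalent metrics (the c-projective analogue of the metrizability equation), and they form a finite-dimensional real vector space $\mathcal A$ containing $\mathrm{Id}$, which corresponds to $g_0$. Since $v$ is essential, its flow deforms $g_0$ nontrivially inside $[g_0]$, so $\dim\mathcal A\ge 2$; and since $g_0$ has non-constant holomorphic sectional curvature, the c-projective rigidity result (that degree of mobility $\ge 3$ forces constant holomorphic sectional curvature) gives $\dim\mathcal A=2$, say $\mathcal A=\operatorname{span}\{\mathrm{Id},A\}$ for a generic solution $A$.

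Away from the closed, nowhere dense singular locus where the eigenvalue configuration of $A$ degenerates or its eigenvalues fail to be functionally independent, I would apply the local classification of c-projectively equivalent four-dimensional Kähler metrics (Theorem \ref{thm:allmetricsintheclass}) to reduce $(g,\omega,A)$ to one of three structural normal forms: the Liouville form, with two real functionally independent eigenvalues $\rho,\sigma$ serving as coordinates; the complex-Liouville form, with a complex-conjugate pair of eigenvalues, occurring only in split signature; and the degenerate form, with one eigenvalue constant and transverse geometry given by a $2$-dimensional Kähler factor $(h,j,\Omega)$. In each form the eigenvalues are the coordinates $\rho,\sigma$ (or the holomorphic $\rho(z)$), and two commuting Hamiltonian Killing fields appear as $\partial_s,\partial_t$.

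Next I would exploit that $v$ is c-projective to get a well-defined linear endomorphism $\ell_v\colon\mathcal A\to\mathcal A$, $X\mapsto L_vX$. Writing $\ell_v$ as a $2\times2$ matrix in the basis $\{\mathrm{Id},A\}$ and using the residual freedom (replacing $A$ by $aA+b\,\mathrm{Id}$ and adding an affine field to $v$) to bring it into a canonical form, I would show that the action of $v$ on an eigenvalue $\lambda\in\{\rho,\sigma\}$ of $A$ is governed by a quadratic Riccati-type vector field $\dot\lambda=q(\lambda)$, i.e.\ a one-parameter projective flow on the eigenvalue line. Classifying such flows up to conjugacy by the sign of the discriminant of $q$ produces the three integrable profiles for the eigenvalue functions: exponential (two distinct real fixed points), affine-linear (a repeated fixed point) and $\tan$ (complex fixed points), which is precisely the dichotomy distinguishing L2, the pair L1/L3, and L4, and, run over the holomorphic coordinate, CL2, CL1/CL3 and CL4, and over a single nonconstant eigenvalue, D1--D3. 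Imposing in addition that $v$ normalizes the algebra $\langle\partial_s,\partial_t\rangle$ of Hamiltonian Killing fields fixes the $\partial_s,\partial_t$-components of $v$ and the resonance weights, which separates L1 from L3 and produces the continuous parameter $\beta$.

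Finally I would substitute the determined eigenvalue functions $\rho,\sigma$ back into the c-projective condition $L_vA=\ell_v(A)\in\mathcal A$, equivalently into the requirement that $L_vg$ stay in $[g]$. For each structural case this becomes a decoupled system of ordinary differential equations for the still-free metric functions $F,G$ (and, in the degenerate case, constraints on $\tau,h$ and the transverse function $G(u_2)$), whose integration yields exactly the elementary expressions together with the admissible parameter ranges ($\beta\neq1$, the subcases $\beta+2=0$ versus $\beta+2\neq0$, and the constants $c,C,c_i,d_i$). I expect the main obstacle to be the organizational step of showing that the canonical form of $\ell_v$ together with its interaction with the Killing directions $\partial_s,\partial_t$ is a complete invariant, so that the conjugacy types map bijectively onto L1--L4, CL1--CL4, D1--D3 with no case omitted and no spurious duplication, all while carefully tracking the genericity hypothesis ``almost every point'' needed for $A$ to furnish honest coordinates and for the eigenvalues to be regular.
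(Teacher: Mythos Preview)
Your strategy is essentially the paper's: reduce to degree of mobility two, put the metric in one of the three structural normal forms, use the linear action of $\mathcal L_v$ on the two-dimensional solution space, normalise the $2\times2$ matrix, and solve the resulting first-order ODEs. Two points to tighten.

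First, the structural input you cite as Theorem~\ref{thm:allmetricsintheclass} is logically downstream of the result you are proving; what you actually need is the prior classification (the paper's Theorem~\ref{thm:normalform}) that puts $(g,\omega,A)$ into Liouville, complex Liouville, or degenerate form \emph{before} any c-projective vector field enters.

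Second, your ``residual freedom'' $A\mapsto aA+b\,\mathrm{Id}$ together with adding an affine field to $v$ only realises an affine (upper-triangular plus scaling) subgroup of $GL(2,\mathbb R)$ acting on $\mathcal A$; it is not enough to reach all four Jordan normal forms of $\ell_v$. The paper gets the full $GL(2,\mathbb R)$ by also allowing a change of the reference metric $g$ inside $[g_0]$, which means replacing the basis vector $\mathrm{Id}$ by another (nondegenerate) solution. This is where the nondegeneracy analysis (Lemmas~\ref{lem:order2} and~\ref{lem:order1} in the paper) becomes essential: in the order-one (degenerate) case one shows that for the nilpotent and the nontrivial Jordan-block normal forms the ``metric'' basis vector is forced to be the \emph{other} one, and---crucially---that the complex-eigenvalue normal form of $\ell_v$ is impossible because the constant eigenvalue of $A$ would have to satisfy a quadratic with no real root. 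Without this step your sketch does not explain why there are only three degenerate cases D1--D3 rather than four. Once you add this, your Riccati/discriminant organisation of the eigenvalue flow is equivalent to the paper's Jordan-form organisation, and the remaining ODE integration proceeds identically.
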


\begin{remark}
The listed metrics do not have constant holomorphic sectional curvature for the generic choice of parameters $\beta,c_1,c_2,d_1,d_2$. However, for some cases, parameters yielding constant holomorphic curvature metrics exist and may easily be computed.
\end{remark}

\begin{remark}
In Theorem \ref{thm:normalformwithv} the vector field $v$ need not be essential for the metric $g$, but only for its c-projective equivalence class $[g]=[g_0]$. In fact, $v$ is a Killing vector field for $g$ in the cases L1,CL1 and it is an infinitesimal homothety for $g$ in the cases L2,L3,CL2,CL3. In the cases L4,CL4, $v$ is indeed an essential vector field for $g$. In the degenerate case, $v$ is essential for the metric $g$ in the cases D1 and D3 and it is an infinitesimal homothety for the metric in D2.
\end{remark}

 Theorem \ref{thm:normalformwithv} describes all classes of c-projectively equivalent metrics admitting an essential c-projective vector field. In order to describe all metrics
   admitting an essential c-projective vector field we need to describe all metrics within these equivalence classes. The answer is given by:

\begin{thm}\label{thm:allmetricsintheclass}
Let $(g,J)$ be one of the local K\"ahler structures  of Theorem \ref{thm:normalformwithv}. Assume that $ g$ does not have constant holomorphic curvature and $\hat g\in [g]$.  Then $\hat g$ is either proportional to $ g$ with a constant coefficient  or given in the same coordinate system  by  the formulas below (provided the parameters $C$ and $c$ are such that $\hat g$ is well defined and nondegenerate).

Liouville case:
\begin{equation}\label{eq:ghat1}
\aligned
\hat{g}=&\frac{C}{(\rho-c)^2(\sigma-c)^2(\rho-\sigma)}\Bigg[(\rho-\sigma)^2(\rho-c)(\sigma-c)\left(\frac{F^2}{\rho-c}dx^2+\epsilon\frac{G^2}{\sigma-c}dy^2\right)+\\
&+\left(\left(\frac{\rho'}{F}\right)^2(\sigma-c)+\epsilon
(\rho-c) \left(\frac{\sigma'}{G}\right)^2\right)ds^2+\left(\left(\frac{\rho'}{F}\right)^2\sigma^2(\sigma-c)+\epsilon \rho^2(\rho-c) \left(\frac{\sigma'}{G}\right)^2\right)dt^2+\\
&+2\left(\left(\frac{\rho'}{F}\right)^2\sigma(\sigma-c)+\epsilon \rho(\rho-c) \left(\frac{\sigma'}{G}\right)^2\right)dsdt\Bigg]\endaligned
\end{equation}
\indent Complex Liouville case:
\begin{equation}\label{eq:ghat2}
\aligned
\hat{g}=&\frac{C}{(\rho-c)^2(\bar{\rho}-c)^2(\bar\rho-\rho)}\Bigg[\frac{1}{4}(\bar{\rho}-\rho)^2(\rho-c)(\bar\rho-c)\left(\frac{F^2}{\rho-c}dz^2-\frac{\bar{F}^2}{\bar{\rho}-c}d\bar{z}^2\right)+\\
&+\left(\left(\frac{1}{\bar{F}}\frac{\partial \bar{\rho}}{\partial \bar{z}}\right)^2 (\rho-c)-  \left(\frac{1}{F}\frac{\partial \rho}{\partial z}\right)^2(\bar{\rho}-c)\right)ds^2+\left(\left(\frac{1}{\bar{F}}\frac{\partial \bar{\rho}}{\partial \bar{z}}\right)^2 \rho^2(\rho-c)-\left(\frac{1}{F}\frac{\partial \rho}{\partial z}\right)^2\bar{\rho}^2(\bar{\rho}-c)\right)dt^2+\\
&+2\left(\left(\frac{1}{\bar{F}}\frac{\partial \bar{\rho}}{\partial \bar{z}}\right)^2 \rho(\rho-c)- \left(\frac{1}{F}\frac{\partial \rho}{\partial z}\right)^2\bar{\rho}(\bar{\rho}-c)\right)dsdt\Bigg].
\endaligned
\end{equation}
\indent Degenerate case:
\begin{align}
\hat{g}=\frac{C}{c(\rho-c)}\left(\frac{\rho}{c}h+\frac{\rho F^2}{\rho-c}dx^2+\frac{1}{\rho(\rho-c)}\left(\frac{\rho'}{F}\right)^2\theta^2\right).\label{eq:ghat3}
\end{align}
\end{thm}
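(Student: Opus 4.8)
The plan is to pass to the standard linear reformulation of c-projective equivalence. To any K\"ahler metric $\hat g\in[g]$ one attaches the $g$-self-adjoint, $J$-commuting endomorphism $A=(\det\hat g/\det g)^{1/(2(n+1))}\hat g^{-1}g$ (here $n=2$); then $\hat g\in[g]$ if and only if $A$ solves the c-projective metrizability equation $\gnabla_X A=X\odot\Lambda+JX\odot J\Lambda$ with $\Lambda=\tfrac14\grad_g\tr A$, and conversely $\hat g$ is recovered by $\hat g(\cdot,\cdot)=\mathrm{const}\cdot(\det A)^{-1/2}\,g(A^{-1}\cdot,\cdot)$. This equation is linear, so its solutions form a vector space $\mathcal A$ containing $\Id$, and the scalar solutions $c_0\Id$ are exactly the metrics proportional to $g$ with constant coefficient -- the first alternative in the theorem. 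The task therefore reduces to: (i) show $\dim\mathcal A=2$; (ii) exhibit a second basis solution $A_1$ and compute the corresponding $\hat g$.

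For (i): each normal form of Theorem \ref{thm:normalformwithv} was built so that, besides $\Id$, the metric $g$ carries a distinguished nonscalar solution $A_1$ whose eigenvalues are the separated functions visible in $g$ -- namely $\rho,\sigma$ in the Liouville case, $\rho,\bar\rho$ in the complex Liouville case, and $\rho$ together with a constant in the degenerate case -- so $\dim\mathcal A\ge2$. Conversely, if $\dim\mathcal A\ge3$ then, by the rigidity results for c-projectively equivalent K\"ahler metrics (the c-projective counterpart of the classical Fubini-type theorems, used in \S\ref{sec:localclass} and in the authors' earlier work), the metric $g$ would have constant holomorphic sectional curvature near a generic point, contrary to hypothesis; hence $\mathcal A=\mathrm{span}\{\Id,A_1\}$. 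Thus every $\hat g\in[g]$ not proportional to $g$ corresponds to $A=C_2(A_1-c\,\Id)$ for constants $c,C_2$ with $C_2\neq0$.

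For (ii): substitute $A=C_2(A_1-c\,\Id)$ into $\hat g(\cdot,\cdot)=\mathrm{const}\cdot(\det A)^{-1/2}g(A^{-1}\cdot,\cdot)$ in the coordinates of Theorem \ref{thm:normalformwithv}. In the Liouville case $A_1-c\,\Id$ has eigenvalues $\rho-c,\sigma-c$, each of real multiplicity two, so $\det A=C_2^2(\rho-c)^2(\sigma-c)^2$, while $A^{-1}$ rescales the $\rho$-eigenblock of $g$ by $(\rho-c)^{-1}$ and the $\sigma$-eigenblock by $(\sigma-c)^{-1}$; writing $g$ in the blocks $\{dx,\,ds+\sigma dt\}$ and $\{dy,\,ds+\rho dt\}$ and collecting terms produces precisely the prefactor $C/((\rho-c)^2(\sigma-c)^2(\rho-\sigma))$ together with the weights $1/(\rho-c)$, $1/(\sigma-c)$ on the quadratic pieces, i.e.\ \eqref{eq:ghat1}. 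The complex Liouville case ($\sigma$ replaced by $\bar\rho$) and the degenerate case ($\sigma$ replaced by the constant eigenvalue) are handled identically and give \eqref{eq:ghat2} and \eqref{eq:ghat3}. The converse direction -- that each such $(g,\hat g)$ is an honest c-projectively equivalent pair whenever $\hat g$ is nondegenerate -- is immediate from linearity once $\Id$ and $A_1$ are known to solve the metrizability equation.

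The step I expect to be the main obstacle is the bookkeeping in (ii): one must simultaneously put $g$ and $A_1$ into eigenblock form while keeping the cross terms $ds+\sigma dt$, $ds+\rho dt$ intact, carry the fractional power $(\det A)^{-1/2}$ correctly through the inversion, and check that the outcome is independent of the scaling chosen for $A_1$ (this freedom is absorbed into $C$). A secondary delicate point is the bound $\dim\mathcal A\le2$: the cited rigidity statement only yields constant holomorphic curvature on an open dense subset, which is why the conclusion is stated near almost every point; one should moreover verify that at the exceptional values of $\beta,c_1,c_2,d_1,d_2$ where $\dim\mathcal A$ jumps the metric genuinely acquires constant holomorphic curvature, so that excluding that case is exactly the right hypothesis.
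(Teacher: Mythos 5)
Your proposal is correct and follows essentially the same route as the paper: the paper reduces the statement to Proposition \ref{prop:degree2indim2} (degree of mobility $\le 2$ for non-constant holomorphic curvature), so that $\mathcal A(g,J)=\mathrm{span}\{\Id,A\}$ with $A$ the explicit solution from Theorem \ref{thm:normalform}, and then (as noted in the remark following that theorem) obtains \eqref{eq:ghat1}--\eqref{eq:ghat3} by solving \eqref{eq:defA} for $\hat g$, i.e.\ $\hat g=\mathrm{const}\cdot(\det(A-c\,\Id))^{-1/2}\,g(A-c\,\Id)^{-1}$ in the given eigenblock coordinates. Your two steps (i) and (ii) match this exactly, including the absorption of the scaling freedom into $C$.
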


Combining Theorems \ref{thm:normalformwithv} and \ref{thm:allmetricsintheclass}, we obtain a complete list of K\"ahler metrics of nonconstant holomorphic curvature admitting a c-projective vector field. As an application of the local description of essential c-projective vector fields given by Theorems \ref{thm:normalformwithv} and \ref{thm:allmetricsintheclass}, we obtain:

\begin{thm}\label{YOconjecture}
Let  $(M^4, g, J)$ be a closed K\"ahler surface of arbitrary signature admitting an essential c-projective vector field $v$. Then, for a certain constant $C\ne 0$ the
metric $C\cdot  g$ is a Riemannian metric of   constant positive
holomorphic curvature and therefore $(M^4, C \cdot g,  J)$  is isometric to
$\mathbb{CP}^2$ equipped with the Fubini-Study metric and the canonical complex structure.
\end{thm}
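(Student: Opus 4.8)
The plan is to use Theorems \ref{thm:normalformwithv} and \ref{thm:allmetricsintheclass} to reduce the global statement to a short case check: a closed K\"ahler surface $(M^4,g,J)$ admitting an essential c-projective vector field must, \emph{if it had non-constant holomorphic curvature}, contain an open dense set covered by the charts L1--L4, CL1--CL4, D1--D3, with $g$ (up to a constant factor and up to passing to a c-projectively equivalent metric) given by one of the explicit formulas \eqref{eq:ghat1}--\eqref{eq:ghat3}. The strategy is then to show that none of these local models extends to a closed manifold, forcing $g$ to have constant holomorphic curvature, and then to invoke the classical fact that a closed K\"ahler manifold of constant holomorphic curvature is, after rescaling, $\mathbb{CP}^2$ with the Fubini--Study metric (the sign of the curvature being pinned down by compactness, since the constant-negative and constant-zero models — complex hyperbolic space and $\mathbb{C}^2/\Gamma$ — carry no essential c-projective vector field by the Riemannian Yano--Obata theorem in the one case and are ruled out by compactness and essentiality in the others).

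First I would set up the dichotomy: either $g$ has constant holomorphic sectional curvature everywhere, or the set $U$ where the holomorphic curvature is non-constant (more precisely, where the local normal forms of Theorem \ref{thm:normalformwithv} apply — ``almost every point'') is open and dense. On $U$ one has, around almost every point, a c-projectively equivalent metric in the list, and by Theorem \ref{thm:allmetricsintheclass} the original metric $g$ itself is, up to a constant, one of \eqref{eq:ghat1}--\eqref{eq:ghat3} in those coordinates. The key analytic input I would then extract from these formulas is a function canonically associated with the c-projective class — essentially the eigenvalue function $\rho$ (or the pair $\rho,\sigma$), which for a Hamiltonian $2$-form is a globally defined smooth function on $M$ (it is the root of the characteristic polynomial of the associated endomorphism and is continuous, and smooth on the dense set where the roots are simple). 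On a closed manifold this function attains a maximum and a minimum, and at such a point its differential vanishes.

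The main obstacle — and the heart of the argument — is to derive a contradiction from the behaviour of $\rho$ (and of the metric components) at these critical points, i.e.\ to show that the local models simply cannot be glued into a closed manifold unless they degenerate to the constant-curvature case. Concretely I would argue as follows. In the Liouville cases the coordinate vector fields $\partial_s,\partial_t$ (or appropriate combinations) generate, in the Killing/homothety subcases L1--L3, CL1--CL3, a local isometry or homothety action; a closed manifold admits no nontrivial homothety (the scalar curvature would be forced to be both constant and to scale), which kills L2, L3, CL2, CL3 and the homothetic degenerate cases D2, D3 outright, and reduces L1, CL1 to a genuine Killing field for which one then checks, from the explicit $g$ in \eqref{eq:ghat1}--\eqref{eq:ghat2}, that $v$ being essential for the \emph{class} but Killing for $g$ forces another metric in the class to be homothetic-non-Killing — again impossible on a closed manifold. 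The genuinely essential cases L4, CL4, D1 are the subtle ones: here I would examine the function $\rho$, which in L4 equals $-\tan(x)$ and in D1 equals $1/x$, so $\rho$ has no critical points in the chart; since $\rho$ is globally defined on the closed $M$ and must have a critical point, the dense set where the chart is valid cannot be all of $M$, and one examines what happens on its complement. One shows, using the Hamiltonian $2$-form machinery of \cite{MR2228318} (the structure of the zero/degeneracy locus of the $2$-form) together with the split/definite signature bookkeeping, that the complement is a lower-dimensional submanifold near which the metric \eqref{eq:ghat1}--\eqref{eq:ghat3} cannot close up smoothly — the $(\rho-c)$ and $(\rho-\sigma)$ factors in the denominators become singular in a way incompatible with smooth extension for any choice of constants $c,C$. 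This closes all non-constant-curvature cases, leaving only constant holomorphic curvature, whence the classical classification finishes the proof.

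Finally I would note the sign: among the closed K\"ahler space forms — $\mathbb{CP}^n$ (positive), flat complex tori (zero), and compact quotients of complex hyperbolic space (negative) — only the positively curved one admits an essential c-projective vector field, since in the flat and negative cases the c-projective and affine (resp.\ projective-type) vector fields coincide, as follows from the $4$-dimensional Riemannian Yano--Obata statement already available in the literature (or can be checked directly). Hence $C\cdot g$ has constant positive holomorphic curvature and $(M^4,C\cdot g,J)$ is $\mathbb{CP}^2$ with the Fubini--Study metric, which is exactly the assertion of Theorem \ref{YOconjecture}.
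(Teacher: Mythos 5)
There is a genuine gap, and it sits exactly where the paper has to work hardest. You claim that the homothety subcases L2, L3, CL2, CL3, D2, D3 are killed ``outright'' because a closed manifold admits no nontrivial homothety. This confuses the local normal-form representative $g$ of Theorem \ref{thm:normalformwithv} with the metric $g_0$ that actually lives on the closed manifold: $v$ is a homothety only for that particular representative, which is \emph{not} globally defined on $M$ (its eigenvalue $\rho$, e.g.\ $c_1e^{(\beta-1)x}$ in L2, blows up along the flow of $v$), while the global metric $g_0$ is some other member of the class, given by \eqref{eq:ghat1}--\eqref{eq:ghat3} for suitable $c,C$, and for $g_0$ the field $v$ is genuinely essential, not homothetic. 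So no contradiction with compactness arises this way. In fact L2 and D2 are precisely the two cases that \emph{survive} the first compactness test (boundedness of the eigenvalues of $A$ along integral curves of $v$, governed by $\dot\rho=\rho^2+\beta\rho+\alpha$, which has bounded non-constant solutions exactly when the roots are real and distinct), and the paper then needs an explicit computation of $\mathrm{Scal}(\gamma(\tau))$ for the global representative $\hat g$ along trajectories of $v$, showing that boundedness of the scalar curvature forces the parameters into the constant-holomorphic-curvature locus. Your proposal contains no substitute for this step, and the cases you label as ``the subtle ones'' (L4, CL4, D1, and also L1, L3, D3) are in fact the easy ones: there $\rho(\gamma(\tau))$ is unbounded for every nonconstant solution of the ODE, which immediately contradicts continuity of $\rho$ on a compact manifold, with no need to analyse how the chart fails to close up.

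A second, independent gap is the complex Liouville case. You fold CL1--CL4 into the same Killing/homothety dichotomy, but the paper needs a separate global argument (Theorem \ref{thm:closedcplxconjugate}): if $A$ has a non-real eigenvalue $\rho=\mathcal R+\i\mathcal I$ anywhere, one locates a maximum of $\mathcal I$, shows the singular points lie on a single geodesic, constructs a totally geodesic surface $N$ carrying a canonical complex structure $J_N=\frac{1}{\mathcal I}(A_N-\mathcal R\,\mathrm{Id})$ for which $\rho$ is holomorphic, and applies the maximum principle (or Morera's theorem) to conclude $\rho$ is constant and $A$ parallel. Nothing in your proposal performs this exclusion. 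The overall architecture you describe --- reduce to the local models, use compactness to constrain the parameters, finish with the classification of closed constant-holomorphic-curvature K\"ahler surfaces --- is the right one, but the two analytic engines that make it run (the scalar-curvature asymptotics in L2/D2 and the maximum-principle argument in the complex Liouville case) are missing or replaced by arguments that do not apply.
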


{ The  Riemannian analog of this statement is known as the Yano-Obata conjecture. It was stated and much studied in the 70th, see \cite[\S1.2]{MR2998672} for a historical overview. A proof valid in full generality (in the Riemannian case) appeared only recently  in  \cite{MR2998672}. We hope that the methods and ideas 
developed here will allow to treat the higher-dimensional case in arbitrary signature as well.}

\subsection{Main idea of the proof} \label{trick}

The existence of a c-projective vector field for  $(g, J)$ can easily be cast as an overdetermined \textsc{pde} system on the components of $g$, $J$ and $v$. The system is nonlinear, of second order in the derivatives of the unknown functions and is not tractable by standard methods.  Another method to study existence of c-projective vector fields is based on an observation of Mikes and Domashev  \cite{DomMik1978} who found a linear system of \textsc{pde}s whose solutions correspond to K\"ahler metrics that are c-projectively equivalent to a given one. The system of Mikes-Domashev allows one to rewrite the system of \textsc{pde}s that corresponds to the existence of a c-projective vector field as a linear system of \textsc{pde}s. The system is overdetermined, but unfortunately of third order.

Let us explain a trick that allows us to reduce the existence problem for c-projective vector fields to solving a \textsc{pde} system  with more equations on  less number of  unknown functions (\textsc{pde} systems of higher degree of over-determinacy are usually easier to solve) and which is of first order in the derivatives.  The occurrence of such a simplification might be surprising and indeed requires several preliminary results.  The trick was recently effectively used to solve the Lie problem \cite{MR2892455} and is explained in \cite[\S 2, especially \S 2.3]{MR2892455}. One of the two ideas behind this trick   already appeared in projective geometry in \cite{Fubini,Sol} and the other, in a certain form and in dimension two, in \cite{liouville} (see also~\cite{MR2384718} for the higher dimensional case).
 The trick was already used in c-projective geometry in the proof of the {  (Riemannian)} Yano-Obata conjecture  \cite{MR2998672}.   Let us explain the rough schema/idea/tools of the trick,  the details and the calculations are in \S \ref{sec:solvinglie}.

Let $(M, J)$ be a complex manifold of real dimension $2n\geq 4$. We  define a \textit{c-projective structure} $([\nabla],J)$ on $M$ as  an equivalence class of $J$-parallel affine torsion-free connections $\nabla$ on $TM$. Two such connections are (c-projectively) equivalent if they have the same $J$-planar curves. The condition that two  connections   are c-metrisable equivalent   is in fact an easy linear algebraic condition,  see \eqref{eq:christtrafo}. Certain c-projective structures $([\nabla],J)$ contain a Levi-Civita
connection of a metric that is K\"ahler with respect to $J$. In this case we say that the metric is \textit{compatible} with the c-projective structure,
and the c-projective structure is \emph{metrisable}.

It was recently observed in \cite{MR2998672} that metrics which are compatible with a c-projective structure $([\nabla],J)$ are in one-to-one correspondence with nondegenerate solutions of a certain overdetermined system of linear partial differential equation. Actually, as we already mentioned above, the existence of such a linear system of \textsc{pde}s was
known before, see \cite{DomMik1978}   or  \cite[Chapter 5, \S 2]{sinjukov}. The advantage of the  modification of this system suggested in  \cite{MR2998672} is that the obtained system is c-projectively invariant.  In the language of  Cartan geometry, the system was obtained and explained  in  \cite{mettlerkahmetri}.

Since the system is linear, its space of solutions is a vector space. If its dimension is one, then all metrics compatible with the c-projective structure are mutually proportional with a constant coefficient of proportionality. Then, every c-projective vector field is a homothety or a Killing vector field.  Hence, it is not essential and therefore not of interest for this article. Now, as it follows from \cite[Lemma 6]{MR2948791} (and also from the earlier paper
~\cite{MR2144249} where a similar statement was proven in the language of Hamiltonian $2$-forms for positively definite metrics), if the space of solutions is at least three-dimensional, the metric has constant holomorphic sectional curvature and we are done. Thus, the interesting case is when there exist two solutions $h, \hat h$ of this system and any other solution is a linear combination of these two. 

We consider the Lie-derivative of these solutions w.r.t. the c-projective vector field $v$. Since the solutions are sections of an associated tensor bundle, their Lie derivative is a well-defined section of the same bundle and by standard arguments (using that the system is c-projectively invariant) one concludes that it is also a solution. Since the space of solutions is two-dimensional, we have
\begin{equation} \label{big number}
\mathcal{L}_v h = \gamma h + \delta \hat h  \quad \textrm{and} \quad \mathcal{L}_v \hat h = \alpha h + \beta \hat h
\end{equation}
for certain constants $\alpha$,..., $\delta$. In other words, $\mathcal{L}_v$ is an endomorphism of the (two-dimensional)
space  of solutions of the linear \textsc{pde} system we are interested in. If we replace $h$ and $\hat h$ by another pair of linearly independent solutions, i.e., if we consider another basis in the space of solutions, then the matrix $\begin{pmatrix} \gamma & \alpha \\ \delta & \beta\end{pmatrix}$ changes by a similarity transformation. Moreover, if we scale the projective vector field by a  constant, the matrix of the corresponding endomorphism will be scaled by the same constant. Thus, without loss of generality we may assume that  the matrix $\begin{pmatrix} \gamma & \alpha \\ \delta & \beta\end{pmatrix}$ is given by \eqref{eq:normalformsv}.

Now, the local description of the pairs of c-projectively equivalent metrics gives us the form of the solutions $h$, $\hat h$ in a local coordinate system. The local form depends on two functions of one variable, or on one holomorphic function of one variable, or on one function of one variable and one function of two variables which we consider to be unknown functions. In addition, the four components of the projective vector field are also considered to be unknown functions. The number of equations is relatively big:  each equation of
 \eqref{big number} consists of actually six equations so altogether we have twelve equations. It appears that it is possible and relatively easy to solve this system, which we do.  The result is the desired local classification of c-projective vector fields.

The proof of the Yano-Obata conjecture (Theorem \ref{YOconjecture}) consists of two steps. First we prove that c-projectively equivalent metrics $g$ and $\hat g$ of complex Liouville type  (see Theorems \ref{thm:normalformwithv} and \ref{thm:allmetricsintheclass})  cannot exist on a compact K\"ahler surface (unless they are affinely equivalent).  The point is that in the complex Liouville case, the pair of metrics $g$ and $\hat g$ gives rise to a natural $(1,1)$-tensor field $A=A(g,\hat g)$ with a complex eigenvalue $\rho$  that behaves as a holomorphic function on an open and (locally) dense set.  As $M$ is compact, this property finally leads to a contradiction with the maximum principle {unless $\rho$ is a constant} which implies that  the metrics are affinely equivalent.

In the other two (Liouville and degenerate) cases  we analyse the behaviour of the eigenvalues of $A=A(g,\bar g)$ and the scalar curvature of $g$ along trajectories of a c-projective vector field $v$. It turns out that the eigenvalues of $A$ may be bounded only in the subcases L2 and D2 from Theorem \ref{thm:normalformwithv}. But in these two subcases, the explicit computation of the scalar curvature $\mathrm{Scal} (\tau(t))$ along a generic integral curve $\tau(t)$ of $v$  shows that  the boundedness of   $\mathrm{Scal} (\tau(t))$ amounts to the fact that  $g$ has constant holomorphic curvature.

\subsection{Outline} \label{sec:background}

In \S2 we recall and collect previously obtained results about c-projectively equivalent metrics and c-projective vector fields. We give precise references whenever possible, and prove most of the results for self-containedness (since certain results were obtained in a different mathematical setup, e.g., in the language of Hamiltonian $2$-forms, and it is easier to prove these statements than to translate them).

In \S3 we partially extend the classification of Apostolov et al. to the four-dimensional (pseudo-Riemannian) K\"ahler case, see Theorem \ref{thm:normalform}.  

Having the classification of Theorem \ref{thm:normalform} at hand, in \S4 we apply the technique explained in  \S\ref{trick} to reduce our problem to a first order \textsc{pde}-system  on $v$ and $g$ in Frobenius form {(i.e., such that all first  derivatives of the unknown functions are  explicit expressions in the unknown functions)}, and solve it.  

Finally, \S5 contains the proof of the Yano-Obata conjecture.

\section{Preliminaries}\label{sec:prelim}

\subsection{C-projective structures and compatible K\"ahler metrics}\label{sec:compks}

Let $M$ be a complex manifold of real dimension $2n\geq 4$ with complex structure $J$. Tashiro showed~\cite{MR0087181} that two $J$-linear torsion-free connections $\nabla$ and $\hat{\nabla}$ are c-projectively equivalent if and only if there exists a $1$-form $\Phi$ such that
\begin{align}
\hat{\nabla}_X Y-\nabla_X Y=\Phi(X)Y+\Phi(Y)X-\Phi(JX)JY-\Phi(JY)JX\label{eq:christtrafo}
\end{align}
for all vector fields $X,Y \in \Gamma(TM)$. The equivalence class of connections c-projectively equivalent to $\nabla$ will be denoted by $[\nabla]$. A K\"ahler metric $g$ on $M$ is said to be compatible with $([\nabla],J)$ if  its Levi-Civita connection $\gnabla$ is an element of $[\nabla]$.  Clearly, two K\"ahler metrics are c-projectively equivalent if and only if they are compatible with the same c-projective structure.

Let $\mathcal{E}(\tfrac{1}{n+1})=(\Lambda^{2n} T^* M)^{\frac{1}{n+1}}$ denote the bundle of volume forms of c-projective weight $\tfrac{1}{n+1}$. By definition, its transition functions are those of $\Lambda^{2n} T^* M$ taken to the power $\tfrac{1}{n+1}$. Let $S^2_J T^* M$ be the bundle of hermitian symmetric $(2,0)$ tensors and denote by $S^2_J T^* M (\tfrac{1}{n+1})=S^2_J T^* M \otimes \mathcal{E}(\tfrac{1}{n+1})$ its weighted version. For any $J$-linear torsion-free connection $\nabla$, consider the linear \textsc{pde} system
\begin{align}
\nabla_X h=X\odot \Lambda+JX\odot J\Lambda
\label{eq:maininvariant}
\end{align}
on sections $h$ of $S^2_J T^* M(\tfrac{1}{n+1})$, where $X\odot Y=X\otimes Y+Y\otimes X$ is the symmetric tensor product and $\Lambda^i=\frac{1}{2n}\nabla_k h^{ki}$ is a vector field of weight $\tfrac{1}{n+1}$.

In~\cite{MR2998672}, it was shown that equation \eqref{eq:maininvariant} does not change if we replace $\nabla$ in \eqref{eq:maininvariant} by another connection in $[\nabla]$, in other words, \eqref{eq:maininvariant} is c-projectively invariant. Furthermore, it was shown that a K\"ahler metric $g$ on $(M,J)$ is compatible with $([\nabla],J)$ if and only if the $(2,0)$ tensor field
\begin{align}
h_g=g^{-1}\otimes (\det g)^{1/(2n+2)}\in \Gamma (S^2_J T^* M (\tfrac{1}{n+1}))\label{eq:metricsolution}
\end{align}
is a solution of \eqref{eq:maininvariant}. Conversely, every nondegenerate section $h$ of $S^2_J T^* M(\tfrac{1}{n+1})$ solving \eqref{eq:maininvariant} gives rise to a unique K\"ahler metric $g$ compatible with $([\nabla],J)$.

Equation \eqref{eq:maininvariant} is linear and of finite type. Consequently,  the space of its solutions $\mathcal{S}([\nabla],J)$ is a finite-dimensional vector space whose dimension $d([\nabla],J)$ is called the \textit{degree of mobility of} the c-projective structure $([\nabla],J)$. The generic c-projective structure has degree of mobility $0$  and it remains an open problem to characterise the c-projective structures having degree of mobility at least one (see~\cite{mettlerkahmetri} for partial results in the surface case). For a K\"ahler metric $g$ on $(M,J)$ we define $d(g,J)=d([\gnabla],J)$ and call it the \textit{degree of mobility} of the K\"ahler structure $(g,J)$. {Note that we always have $d(g,J)\geq 1$ since $h_g$ defined in \eqref{eq:metricsolution} solves \eqref{eq:maininvariant}.}

If the c-projective structure $([\nabla],J)$ admits a compatible K\"ahler metric $g$, the equation \eqref{eq:metricsolution} is more or less equivalent to the one obtained by
Mikes and Domashev \cite{DomMik1978}. They  showed that the K\"ahler metrics $\hat{g}$ on $(M,J)$  which are c-projectively equivalent to $g$, are in one-to-one correspondence with nondegenerate  $g$-symmetric endomorphisms $A:TM\rightarrow TM$ commuting with $J$  and solving
\begin{align}
\gnabla_X A=X^{\flat}\otimes \Lambda+\Lambda^\flat \otimes X+(JX)^{\flat}\otimes J\Lambda+(J\Lambda)^\flat\otimes JX\label{eq:main}
\end{align}
for every vector field $X \in \Gamma(TM)$, where $\Lambda=\tfrac{1}{4}\grad_g(\tr\,A)$ and $X^\flat=g(X,.)$. The correspondence is given by
\begin{equation}
A=A(g,\hat{g})=\left(\frac{\det\hat{g}}{\det g}\right)^{1/(2(n+1))}{\hat{g}^{-1}g,}\label{eq:defA}
\end{equation}
{where  $g,\hat g:TM\rightarrow T^* M$ are viewed as bundle isomorphisms and $\hat g^{-1}g:TM\rightarrow TM$ denotes the composition. In local coordinates, we have $(\hat g^{-1}g)^i_j=\hat g^{ik}g_{kj}$, where 
$\hat g^{ik}\hat g_{kj}=\delta^i_j$, so the matrix of  the endomorphism 
$\hat g^{-1}g$ is the product of  the inverse to the matrix of $\hat g$ and of the matrix of $g$. 
We will use similar notation throughout the paper. If for instance $A:TM\rightarrow TM$ is a tensor of type $(1,1)$, then the composition $gA:TM\rightarrow T^* M$ is the tensor of type $(0,2)$ given in local coordinates by $(gA)_{ij}=g_{ik}A^k_j$.}

The linear space of $g$-symmetric $J$-commuting solutions of \eqref{eq:main} will be denoted by $\mathcal{A}(g,J)$. It is easy to check that this space is isomorphic to $\mathcal{S}([\nabla],J)$ via the map $\varphi_g : \mathcal{S}([\nabla],J) \to \mathcal{A}(g,J)$ defined by
\begin{equation}\label{eq:solspaceiso}
\varphi_g(h)=hh_{g}^{-1}
\end{equation}
where $h_g$ is defined by \eqref{eq:metricsolution}. In particular, we have $\varphi_g(h_g)=\mathrm{Id}$. 

As we will explain below, the existence of a c-projective vector field $v$ for $(g,J)$ gives rise to a c-projectively equivalent metric $\hat{g}$. A first step towards the classification of local four-dimensional K\"ahler structures admitting a c-projective vector field is thus to classify the local four-dimensional K\"ahler structures for which equation \eqref{eq:main} admits a (non-trivial) solution. Since we are looking for essential c-projective vector fields, we are seeking for K\"ahler metrics admitting an $A\in \mathcal{A}(g,J)$ which is non-parallel.

In the Riemannian case the classification of K\"ahler structures admitting solutions of \eqref{eq:main} is already known. Indeed, as the reader may easily verify, the elements $A\in \mathcal{A}(g,J)$ are in one-to-one correspondence with the real $(1,\! 1)$-forms $\phi$ on $(M,g,J)$ satisfying
$$
\gnabla_X\phi=\frac{1}{2}\left(d \tr_{\omega}\phi\wedge(JX)^{\sharp}-Jd \tr_{\omega}\phi\wedge X^{\sharp}\right)
$$
for every vector field $X\in \Gamma(TM)$. The correspondence is given by $\phi=g(AJ\cdot,\cdot)$. The solutions $\phi$ of the above equation are called \textit{Hamiltonian} $2$-\textit{forms}. Inspired by the work of Bryant~\cite{MR1824987}, Apostolov et al.~\cite{MR2228318} obtained a complete local classification of Hamiltonian $2$-forms (in all dimensions) and subsequently developed a comprehensive global theory~\cite{MR2144249} with applications in extremal~\cite{MR2425136} - and weakly Bochner-flat K\"ahler metrics~\cite{MR2411469}.

Of course, the definition of a c-projective vector field does not require a metric: let  $([\nabla],J)$ be a c-projective structure on the complex manifold $(M,J)$ of real dimension $2n\geq 4$. A vector field $v$ on $M$ is said to be c-projective with respect to $([\nabla],J)$ if its (locally defined) flow preserves the $J$-planar curves of $([\nabla],J)$. The set of c-projective vector fields on $M$ with respect to a given c-projective structure $([\nabla],J)$ is a Lie algebra and will be denoted by $\mathfrak{p}([\nabla],J)$. For a K\"ahler metric $g$ on $(M,J)$, we define $\mathfrak{p}(g,J)=\mathfrak{p}([\gnabla],J)$.

As we already mentioned, K\"ahler structures $(g,J)$ admitting an essential c-projective vector field $v$ necessarily  have degree of mobility $d(g,J)\geq 2$. Indeed, let $v$ be a c-projective vector field. Then, in a neighborhood of every point we can define a metric $g_t=(\phi_{t}^v)^* g$ for small values of $t$, that by assumption is c-projectively equivalent to $g$ on this neighborhood. Then, $A_{t}=A(g,g_{t})$ constructed by \eqref{eq:defA} is contained in $\mathcal{A}(g,J)$. Note that the derivative of the tensor $A_t$ at $t=0$ is an element of $\mathcal{A}(g,J)$. Calculating this derivative explicitly using \eqref{eq:defA}, we obtain that a vector field $v$ on $(M,g,J)$ whose flow preserves $J$ is c-projective with respect to the K\"ahler metric $g$ if and only if the symmetric $J$-linear endomorphism  {$A_v$  defined by}
\begin{equation}\label{hprojvf}
A_v=\left.\frac{d A_t}{d t}\right|_{t=0}= g^{-1}\mathcal{L}_v g -\frac{1}{2(n+1)}\tr (g^{-1}\mathcal{L}_v g)\Id,
\end{equation}
is contained in $\mathcal{A}(g,J)$. It is straightforward to check that $v$ is an infinitesimal homothety if and only if $A_v$ is proportional to $\mathrm{Id}$. Thus, if $v$ is essential, $\mathrm{Id}$ and $A_v$ are linearly independent which implies that $d(g,J)\geq 2$.

Many proofs in our paper essentially use some algebraic properties of the tensors $g$, $J$ and $A$ which can be easily deduced from the following simultaneous canonical form for $g$, $J$ and $A$.  Let $g$ be a symmetric nondegenerate bilinear form on a vector space $V$ and $J: V\to V$ a complex structure such that $g(J\cdot , \cdot)=-g(\cdot , J\cdot)$.
Suppose that an operator $A:V\to  V$ is $g$-symmetric and commutes with $J$. Then, in an appropriate basis, the matrices of $g$, $J$ and $A$  can be simultaneously reduced to the following forms:
\begin{equation}
\label{eq:propA1}
A= \begin{pmatrix} A' & 0 \\ 0 & A' \end{pmatrix}, \quad  g = \begin{pmatrix} g' & 0 \\ 0 & g' \end{pmatrix} \quad \mbox{and} \quad  J = \begin{pmatrix} 0 & \mathrm{Id} \\  -  \mathrm{Id} & 0 \end{pmatrix}
\end{equation}
where $g' A' =  (A')^\top g'$, i.e., $A'$ is $g'$-symmetric.  In particular, the determinant of $A$ is a full square and below we use the notation $\sqrt{\mathrm{det}A}$  for $\det A'$. Notice that  $\sqrt{\mathrm{det}A}$ is necessarily real but might be negative.

Moreover in dimension 4, there are three essentially different cases for $A'$ and $g'$:
\begin{equation}
\label{eq:propA2}
\begin{array}{lll}
A' = \begin{pmatrix}   \rho & 0 \\ 0 & \sigma  \end{pmatrix}, & g' = \begin{pmatrix} \epsilon_1 & 0 \\ 0 & \epsilon_2 \end{pmatrix},  & \rho,\sigma\in \R \mbox{ and } \epsilon_i=\pm 1;
\\  & & \\
A' = \begin{pmatrix}   \mathcal R & \mathcal I \\ -\mathcal I & \mathcal R  \end{pmatrix}, & g' = \begin{pmatrix} 0 & 1 \\ 1 &  0  \end{pmatrix},  & \rho=\mathcal R + \mathrm{i} \mathcal I \in \mathbb C,  \ \mathcal I \ne 0;
\\ & & \\
A' = \begin{pmatrix}   \rho &  1 \\  0 &  \rho  \end{pmatrix}, & g' = \begin{pmatrix} 0 & 1 \\ 1 &  0  \end{pmatrix},  & \rho\in \R.
\end{array}
\end{equation}

\subsection{Hamiltonian Killing vector fields}\label{sec:killing}

An important tool in the theory of c-projectively equivalent K\"ahler metrics are certain Killing vector fields for $g$ that can be canonically constructed from $g$ and $A$. The Killing fields are Hamiltonian (i.e., they are skew-gradient for certain functions on $M$) with respect to the symplectic structure $g(J.,.)$ on $M$. In the Riemannian situation, the results of this section are special cases of the results in~\cite{MR2228318}, see also \cite{Kiyohara2010}.

\begin{lem}\label{lem:killing}
Let $(M,g,J,\omega)$ be a K\"ahler manifold {of real dimension $2n\geq 4$} and let $A\in \mathcal{A}(g,J)$ be non-degenerate. Then the $\omega$-Hamiltonian vector field $X_H$ of the Hamiltonian $H=\sqrt{\det A}$ is Killing.
\end{lem}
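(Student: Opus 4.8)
The plan is to show that $X_H$ is Killing by computing the Lie derivative $\mathcal{L}_{X_H} g$ and verifying it vanishes, or equivalently by showing that the $(0,2)$-tensor $g(\gnabla_\cdot X_H, \cdot)$ is skew-symmetric. The natural starting point is to identify $X_H$ explicitly. Since $\omega = g(J\cdot,\cdot)$, the Hamiltonian vector field of $H$ is characterized by $\omega(X_H, \cdot) = -dH$, i.e. $g(JX_H,\cdot) = -dH$, so $JX_H = -\grad_g H$ and hence $X_H = J\grad_g H$. Thus I would first compute $dH = d\sqrt{\det A}$ in terms of $A$ and $\Lambda$. Using the defining equation \eqref{eq:main} for $A$ together with $\Lambda = \tfrac14 \grad_g(\tr A)$, one gets a formula for $\gnabla(\det A)$, and since $\det A$ is a full square (as noted after \eqref{eq:propA1}), one obtains $\grad_g\sqrt{\det A} = \sqrt{\det A}\,A^{-1}\Lambda$ (this is the standard Jacobi-type identity $d(\det A) = \det A \cdot \tr(A^{-1}dA)$ combined with the trace of \eqref{eq:main}, which for the non-degenerate $g$-symmetric $J$-commuting $A$ gives $\tr(A^{-1}\gnabla_X A) = 2g(A^{-1}\Lambda, X)$ — here the factor comes from the two middle terms of \eqref{eq:main} matching the two $J$-terms, using $JA = AJ$ and skew-symmetry). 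Hence $X_H = \sqrt{\det A}\, J A^{-1}\Lambda$.

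Next I would compute $\gnabla_X X_H$ and symmetrize in $X$. Write $X_H = f\, JA^{-1}\Lambda$ with $f = \sqrt{\det A}$. Differentiating, $\gnabla_X X_H = (Xf) JA^{-1}\Lambda + f\big(J(\gnabla_X A^{-1})\Lambda + JA^{-1}\gnabla_X\Lambda\big)$, using $\gnabla J = 0$. Here $\gnabla_X A^{-1} = -A^{-1}(\gnabla_X A)A^{-1}$ is given by \eqref{eq:main}, and $\gnabla_X\Lambda$ can be obtained by differentiating the definition $\Lambda = \tfrac14\grad_g\tr A$ and using the Hessian of $\tr A$; alternatively, and more cleanly, one uses the known integrability/prolongation consequence of \eqref{eq:main}, namely that there is a constant (or function) $\mu$ with $\gnabla_X\Lambda = \mu X - $ (something built from $A$ and the curvature) — but since I only need the skew-symmetric part in $X$ and the final pairing $g(\gnabla_X X_H, Y)$, the symmetric contributions will cancel. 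The key algebraic point is that all terms of the form $g(\text{symmetric-in-}X\text{-and-}\Lambda, Y)$ produce, after the $J$, a skew pairing, while the Hessian term $g(J A^{-1}\gnabla_X\grad_g\tr A, Y)$ contributes a piece symmetric in $X,Y$ that must be shown to vanish upon substituting \eqref{eq:main} for the second derivative of $\tr A$.

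The cleanest route, which I would actually adopt, is to compute $(\mathcal{L}_{X_H} g)(X,Y) = g(\gnabla_X X_H, Y) + g(\gnabla_Y X_H, X)$ and show it is zero by plugging in $X_H = f J A^{-1}\Lambda$ and using \eqref{eq:main} twice (once for $\gnabla A$, once — via $\Lambda = \tfrac14\grad_g\tr A$ and the fact that $\gnabla_X\grad_g\tr A$ relates to $\gnabla_X\Lambda$, which in turn is controlled by the trace of \eqref{eq:main}). Concretely, one checks that the symmetric part of $X \mapsto \sqrt{\det A}\,(X f)\,JA^{-1}\Lambda/f = (X\log f) X_H$ combines with the symmetric part coming from $f J A^{-1}\gnabla_X\Lambda$; since $X\log f = g(A^{-1}\Lambda, X)$, the first gives $g(A^{-1}\Lambda,X)\,g(JA^{-1}\Lambda, Y)\cdot f$ which is manifestly skew in $X,Y$ after noting $g(JA^{-1}\Lambda,Y)$ paired with $g(A^{-1}\Lambda,X)$ — wait, that product is symmetric, so it must cancel against an equal and opposite term from $f J(\gnabla_X A^{-1})\Lambda = -f J A^{-1}(\gnabla_X A) A^{-1}\Lambda$, whose symmetric-in-$X,Y$ part is computed from \eqref{eq:main} with the argument $A^{-1}\Lambda$. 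I expect the two to cancel exactly, leaving only genuinely skew terms. This cancellation — making the Hessian-of-$\tr A$ term and the $(\gnabla A^{-1})$ term annihilate each other's symmetric parts — is the main obstacle; it is a finite but somewhat delicate tensorial computation that relies crucially on $A$ being $g$-symmetric and $J$-commuting and on the precise coefficients in \eqref{eq:main}. In the Riemannian case this is exactly \cite{MR2228318}; for the general-signature statement the same computation goes through verbatim since it is purely algebraic in $g$, $J$, $A$, $\Lambda$ and nowhere uses positivity, which is the point worth emphasizing.
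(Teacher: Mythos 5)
Your reduction of the Killing condition to the hermitian symmetry of the Hessian of $H=\sqrt{\det A}$ is the right first move, and your identification $X_H=\mathrm{const}\cdot\sqrt{\det A}\,JA^{-1}\Lambda$ is correct up to a harmless factor (the trace of \eqref{eq:main} gives $\tr(A^{-1}\gnabla_XA)=4g(A^{-1}\Lambda,X)$, not $2g(A^{-1}\Lambda,X)$). But the proposal stalls exactly at the step you yourself flag as ``the main obstacle''. The term $f\,JA^{-1}\gnabla_X\Lambda$ in $\gnabla_XX_H$ contains $\gnabla\Lambda$, i.e.\ the Hessian of $\tr A$, which is \emph{second-order} data on $A$ and is not determined by the first-order equation \eqref{eq:main}: taking the trace of \eqref{eq:main} only recovers $d\,\tr A=4\,g(\Lambda,\cdot)$, so there is nothing to ``substitute \eqref{eq:main} for the second derivative of $\tr A$''. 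Since this is the only term in $(\mathcal{L}_{X_H}g)(X,Y)$ carrying second-derivative information, its symmetric part cannot cancel against the remaining terms (which are pointwise expressions in $A$ and $\Lambda$); it has to be shown on its own that the endomorphism $X\mapsto\gnabla_X\Lambda$ commutes with $A^{-1}J$. That is a genuine integrability statement, essentially equivalent to the classical fact that $J\grad_g(\tr A)=2K_1$ is Killing --- which is itself an instance of what is being proved (Corollary \ref{cor:killing} with $\mu_1$) --- so as written the argument is circular unless you independently derive the prolongation of \eqref{eq:main}, which brings in the curvature tensor and is not carried out here.

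The paper sidesteps all of this. It writes $A=A(g,\hat g)$ for a c-projectively equivalent metric $\hat g$, observes that the change-of-connection $1$-form in \eqref{eq:christtrafo} is exact, $\Phi=d\phi$ with $\phi=\tfrac{1}{4(n+1)}\ln(\det\hat g/\det g)$, and that $H=\sqrt{\det A}=\mathrm{e}^{-2\phi}$, so that $\mathrm{Hess}\,H$ is hermitian if and only if $2\Phi\otimes\Phi-\gnabla\Phi$ is. The latter is then read off from the transformation law of the hermitian Ricci tensors under \eqref{eq:christtrafo}: the needed second-order identity is extracted from comparing the curvatures of the two Levi-Civita connections rather than from prolonging \eqref{eq:main}. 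If you want to keep your direct route, you must first establish the required symmetry of $\gnabla\Lambda$ (via the prolongation of \eqref{eq:main} or via the Ricci comparison above); otherwise the cancellation you ``expect'' is precisely the content of the lemma. Your closing remark that the argument is signature-independent is correct and applies equally to the paper's proof.
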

\begin{proof}
Let $\hat{g}$ be a K\"ahler metric on $(M,J)$ which is c-projectively equivalent to $g$ so that $A=A(g,\hat{g})$. Recall from~\eqref{eq:christtrafo} that the Levi-Civita connections $\gnabla,{}^{\hat{g}}\nabla$ are related by
$$
{}^{\hat{g}}\nabla_X Y-\gnabla_X Y=\Phi(X)Y+\Phi(Y)X-\Phi(JX)JY-\Phi(JY)JX
$$
for some $1$-form $\Phi$ on $M$. Contracting this last equation implies $\Phi=d\phi$, where
$$\phi=\frac{1}{4(n+1)}\ln\left(\frac{\det\,\hat{g}}{\det\,g}\right)$$
and hence
$$\sqrt{\det(A)}=\left(\frac{\det\,\hat{g}}{\det\,g}\right)^{-\frac{1}{2(n+1)}}=\mathrm{e}^{-2\phi}.$$
Consequently, in order to show that $\sqrt{\det A}$ is the Hamiltonian for a Killing vector field, it suffices to show that the $(0,2)$-tensor
$$
2\Phi\otimes \Phi-\gnabla\Phi
$$
is hermitian. Equation \eqref{eq:christtrafo} and straightforward calculations yield
$$
\mathrm{Ric}(\hat{g})-\mathrm{Ric}(g)+2(n+1)(\Phi \otimes \Phi+\Phi(J\cdot)\otimes\Phi(J\cdot))=-2(n+1)(\gnabla\Phi-2\Phi\otimes \Phi)
$$
where $\mathrm{Ric}(g)$ and $\mathrm{Ric}(\hat{g})$ denote the hermitian Ricci tensors of $g$ and $\hat{g}$ respectively. Since the left hand side of the last equation is hermitian, the claim follows.
\end{proof}

Replacing $A$ with $A-t\,\Id$ in Lemma \ref{lem:killing}, we can expand the expression $\sqrt{\mathrm{det}(A-t\,\Id)}$ at every point $p$ of $M$ as a polynomial of degree $n$ in $t$ (recall that $\sqrt{\det A}=\det A'$ and similarly $\sqrt{\det(A-t\, \Id)}=\det{(A' - t\, \Id')}$).  Hence,
\begin{align}
\sqrt{\mathrm{det}(A-t\,\Id)}=(-1)^n t^n+(-1)^{n-1}\mu_1(p) t^{n-1}+...+\mu_n(p),\label{eq:defsigmai}
\end{align}
where $\mu_i(p)$ are the elementary symmetric functions in the (possibly complex) eigenvalues $$\lambda_1(p),...,\lambda_n(p)$$ of the $J$-linear $g$-symmetric endomorphism $A:T_p M \rightarrow T_p M$. Note that in view of \eqref{eq:propA1} each eigenvalue of $A$ has even multiplicity.

From Lemma \ref{lem:killing}, we immediately obtain

\begin{cor}\label{cor:killing}
Let $(M,g,J,\omega=g(J.,.))$ be a K\"ahler $n$-manifold and  $A\in \mathcal{A}(g,J)$. Let $\mu_i$ be the functions defined in \eqref{eq:defsigmai}, $i=1,...,n$. Then the corresponding $\omega$-hamiltonian vector fields $K_i=X_{\mu_i}$ are Killing.
\end{cor}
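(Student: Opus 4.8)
The plan is to deduce Corollary \ref{cor:killing} directly from Lemma \ref{lem:killing} by applying the lemma not to $A$ itself but to the shifted family $A-t\,\Id$, and then extracting the coefficients of the resulting polynomial identity in $t$. The first observation I would record is that $A-t\,\Id\in\mathcal{A}(g,J)$ for every $t\in\R$: indeed $\mathcal{A}(g,J)$ is a linear space containing $\Id$ (since $\Id=\varphi_g(h_g)$, or directly because $\gnabla\Id=0$ solves \eqref{eq:main} with $\Lambda=0$), so it is closed under the affine substitution $A\mapsto A-t\,\Id$. Moreover, for all but finitely many values of $t$ (those $t$ which are not eigenvalues of $A$ at the point in question) the endomorphism $A-t\,\Id$ is non-degenerate, so Lemma \ref{lem:killing} applies and tells us that the $\omega$-Hamiltonian vector field of $H_t:=\sqrt{\det(A-t\,\Id)}$ is Killing.

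Next I would use the expansion \eqref{eq:defsigmai}, namely
\begin{align}
H_t=\sqrt{\det(A-t\,\Id)}=(-1)^n t^n+(-1)^{n-1}\mu_1 t^{n-1}+\dots+\mu_n,\nonumber
\end{align}
which is a genuine polynomial in $t$ with coefficients the smooth functions $\mu_i$ on $M$ (this uses the canonical form \eqref{eq:propA1}, guaranteeing $\det(A-t\,\Id)=\det(A'-t\,\Id')^2$ is a perfect square so that the square root is a polynomial, not merely a ramified function). Since the map $H\mapsto X_H$, which sends a function to its $\omega$-Hamiltonian vector field, is $\R$-linear, we get the identity of vector fields
\begin{align}
X_{H_t}=(-1)^n t^n X_{1}+(-1)^{n-1}t^{n-1}X_{\mu_1}+\dots+X_{\mu_n}=\sum_{i=1}^{n}(-1)^{n-i}t^{n-i}K_i,\nonumber
\end{align}
where $X_1=0$ and $K_i=X_{\mu_i}$. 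This is a polynomial identity in $t$ whose left-hand side, by Lemma \ref{lem:killing}, is a Killing field for $g$ for all but finitely many $t$ (hence, being polynomial in $t$, for \emph{all} $t$).

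Finally I would conclude by a standard Vandermonde-type argument: the Killing equation $\mathcal{L}_{X_{H_t}}g=0$ holds identically in $t$, and its left-hand side is a polynomial in $t$ of degree $\le n-1$ with coefficients $(-1)^{n-i}\mathcal{L}_{K_i}g$; a polynomial in $t$ that vanishes for infinitely many $t$ is the zero polynomial, so each coefficient vanishes, i.e. $\mathcal{L}_{K_i}g=0$ for every $i=1,\dots,n$. Hence each $K_i=X_{\mu_i}$ is Killing, which is exactly the assertion of Corollary \ref{cor:killing}. There is no real obstacle here; the only point requiring a little care is the one already flagged, namely that one must shift by $t\,\Id$ \emph{before} invoking Lemma \ref{lem:killing} (so that non-degeneracy holds generically) and then pass from "Killing for generic $t$" to "each coefficient Killing" via the polynomial dependence — handing over degeneracy loci to this limiting argument rather than trying to treat them pointwise.
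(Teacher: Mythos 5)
Your proposal is correct and follows essentially the same route as the paper: the authors also substitute $A-t\,\Id$ into Lemma \ref{lem:killing}, expand $\sqrt{\det(A-t\,\Id)}$ as the polynomial \eqref{eq:defsigmai} in $t$ with coefficients $\mu_i$, and read off that each $K_i=X_{\mu_i}$ is Killing. You merely make explicit the details the paper leaves implicit (linearity of $H\mapsto X_H$, genericity of non-degeneracy in $t$, and the coefficient-extraction from a polynomial identity), all of which are sound.
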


\begin{remark}
We see that the vector field $J\mathrm{grad}_g (\tr\,A)$ in equation \eqref{eq:main} coincides with $2K_1$. The fact that this vector field is Killing is well-known in the classical c-projective literature.
\end{remark}

Since we will need it frequently throughout the article we give a proof of the next simple statement:

\begin{lem}\label{lem:easy}
Let $(M,g,J,\omega=g(J.,.))$ be a connected K\"ahler manifold and let $f$ be a smooth function on $M$ such that $f$ and $f^2$ are hamiltonian functions for Killing vector fields on $M$. Then, $f$ is a constant.
\end{lem}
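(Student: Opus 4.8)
The plan is to exploit that a Hamiltonian Killing vector field has a very rigid interaction with the metric, and that being Hamiltonian for both $f$ and $f^2$ forces $df$ to vanish. First I would translate the hypotheses into equations: let $K = X_f$ and $\tilde K = X_{f^2}$ be the $\omega$-Hamiltonian vector fields, so that $\iota_K\omega = -df$ and $\iota_{\tilde K}\omega = -d(f^2) = -2f\,df$. Comparing the two identities gives the pointwise relation $\tilde K = 2f\,K$, valid on all of $M$. The key is that both $K$ and $\tilde K$ are Killing for $g$.

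The main step is to differentiate the relation $\tilde K = 2f\,K$ using the Levi-Civita connection $\gnabla$ and feed in the Killing equations. For any vector field $X$ we get $\gnabla_X \tilde K = 2(Xf)\,K + 2f\,\gnabla_X K$. Since $K$ and $\tilde K$ are Killing, the endomorphisms $X \mapsto \gnabla_X K$ and $X \mapsto \gnabla_X \tilde K$ are skew-symmetric with respect to $g$. Thus the $(0,2)$-tensor $X,Y \mapsto g(\gnabla_X \tilde K, Y)$ is skew in $X,Y$, and so is $g(\gnabla_X K, Y)$; but the right-hand side contributes the term $2(Xf)\,g(K,Y)$, i.e.\ the tensor $2\,df \otimes K^\flat$, whose symmetrisation must therefore vanish: $df \otimes K^\flat + K^\flat \otimes df = 0$. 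Evaluating on $X = Y = $ the $g$-dual of $df$ (or splitting into the case $K^\flat \parallel df$ versus $K^\flat$ and $df$ linearly independent) forces either $df = 0$ or $K^\flat$ proportional to $df$ with the remaining term still having to cancel — a quick linear-algebra check shows $df=0$ at every point where $K \neq 0$.

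It remains to handle the locus where $K$ vanishes, which is where I expect a small amount of care but no real obstacle. Since $K = X_f$, the zero set of $K$ is exactly $\{df = 0\}$, the critical set of $f$; away from it we have just shown $df = 0$, a contradiction unless that open set is empty. Hence $df \equiv 0$ on the open dense set where $K \neq 0$, and by continuity of $df$ this forces $df \equiv 0$ on all of $M$; connectedness of $M$ then gives that $f$ is constant.

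The potential subtlety — and the step I would treat most carefully — is the linear-algebra argument concluding $df = 0$ from $df \otimes K^\flat + K^\flat \otimes df = 0$: this identity says the symmetric product $df \odot K^\flat$ vanishes, and for two covectors $\alpha,\beta$ one has $\alpha \odot \beta = 0$ iff $\alpha = 0$ or $\beta = 0$ (over $\mathbb{R}$, at a point). Since $K^\flat = 0$ precisely on $\{df=0\}$, in both alternatives we land on $df = 0$ at the point in question. Thus the argument is clean provided one is slightly attentive to the case distinction; everything else is a routine consequence of the Killing equations and the definition of Hamiltonian vector fields. An alternative, perhaps even shorter, route is to note that $f$ being Hamiltonian Killing means $\gnabla df$ is (up to a metric factor) skew-type in a way that makes $f$ satisfy a linear ODE along geodesics, and then $f^2$ satisfying the same kind of equation is incompatible with $df \neq 0$; but the computation above is the most self-contained.
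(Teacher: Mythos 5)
Your proof is correct, but it takes a genuinely different route from the paper's. The paper restricts everything to geodesics: since $K=X_f$ is Killing, $g(K,\dot\gamma)=df(J\dot\gamma)$ is a first integral of the geodesic flow, and likewise $f\cdot df(J\dot\gamma)$ for $X_{f^2}$; comparing the two constants along each geodesic through a point and noting that the two hyperplanes $\{df_p(u)=0\}$ and $\{df_p(Ju)=0\}$ cannot cover $T_pM$ forces $df_p=0$. You instead work infinitesimally: from nondegeneracy of $\omega$ you get $X_{f^2}=2fX_f$, differentiate with $\gnabla$, and use that $\gnabla K$ is $g$-skew for a Killing field to conclude $df\odot K^\flat=0$ pointwise, after which the same linear-algebra fact (a symmetric product of two covectors vanishes iff one of them does, because two hyperplanes cannot cover a real vector space) together with $K^\flat=0\Leftrightarrow df=0$ gives $df\equiv 0$. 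Both arguments ultimately rest on that identical linear-algebra observation, but yours is purely pointwise and tensorial, needs no convex neighbourhoods or integration along geodesics, and manifestly works in any signature; the paper's is shorter to state once one quotes the standard first-integral property of Killing fields. One small blemish in your write-up: the sentence ``$df\equiv 0$ on the open dense set where $K\neq 0$'' is misphrased, since that set turns out to be empty (at a point with $K\neq 0$ you derive $df=0$, hence $K=0$, a contradiction); your final paragraph states the dichotomy correctly, and the conclusion $df\equiv 0$ on all of $M$ stands.
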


\begin{proof}
Let $f$ and $f^2$ be hamiltonian functions for Killing vector fields, i.e., the functions
$$
df(J\dot{\gamma}(t))\quad \mbox{and} \quad f(\gamma(t)) df(J\dot{\gamma}(t))
$$
are constant along every geodesic $\gamma$ of $g$, that is, they do not depend on $t$. Then, $f$ is constant on every convex neighborhood and consequently, $f$ is constant on $M$.
\end{proof}

We next derive some properties of the eigenvalues of $A$.

\begin{lem}\label{lem:eigenvaluestructureA}
Let $(M,g,J)$ be a connected K\"ahler surface and let $A\in \mathcal{A}(g,J)$ be non-parallel. Then, there are continuous functions $\rho,\sigma:M\rightarrow \mathbb{C}$ and a decomposition
$$
M=M^r\sqcup M^{\mbox{\tiny sing}}\sqcup M^{c}
$$
of $M$ into disjoint sets, where $M^r,M^{c}$ are open and $M^{\mbox{\tiny sing}}$ is closed, such that $\rho(p)<\sigma(p)$ are real eigenvalues of $A$ for all $p\in M^r$, $\rho(q)=\overline{\sigma(q)}$ are complex-conjugate eigenvalues for $A$ with non-zero imaginary part for all $q\in M^{c}$ and in the points of $M^{\mbox{\tiny sing}}$, $A$ has s single real eigenvalue $\rho=\sigma$ of multiplicity $4$.

Moreover, the subset $M'=M^r\cup M^{c}$ is dense in $M$ and $\rho,\sigma$ are smooth on $M'$. 
\end{lem}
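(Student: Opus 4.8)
The plan is to work pointwise first and then upgrade to the global decomposition. At each point $p\in M$, the endomorphism $A_p$ is $g$-symmetric and commutes with $J$, so by the canonical form \eqref{eq:propA1}--\eqref{eq:propA2} it falls into exactly one of three algebraic types: two distinct real eigenvalues $\rho<\sigma$ (both of multiplicity $2$); a complex-conjugate pair $\rho=\bar\sigma$ with $\mathcal I\neq 0$; or a single real eigenvalue of multiplicity $4$ (which includes both the diagonalizable case with $\rho=\sigma$ and the non-semisimple Jordan case). Accordingly I define $M^r$, $M^c$, $M^{\mbox{\tiny sing}}$ as the sets of points of each type. The functions $\rho,\sigma$ are then defined on $M^r$ as the two roots of the characteristic polynomial of $A'$ (ordered so $\rho<\sigma$), on $M^c$ as the two complex roots, and on $M^{\mbox{\tiny sing}}$ as the common value $\rho=\sigma$.

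Next I would establish the topological claims. The discriminant of the characteristic polynomial of $A'$ — equivalently, the quantity $(\tr A'\,)^2 - 4\det A'$, which depends continuously on $p$ since the coefficients $\mu_1,\mu_2$ of \eqref{eq:defsigmai} do — is positive exactly on $M^r$, negative exactly on $M^c$, and zero exactly on the complement $M^{\mbox{\tiny sing}}$. Hence $M^r$ and $M^c$ are open and $M^{\mbox{\tiny sing}}$ is closed. Continuity of $\rho,\sigma$ on all of $M$ follows since the (suitably ordered) roots of a polynomial depend continuously on its coefficients; smoothness of $\rho,\sigma$ on $M' = M^r\cup M^c$ follows from the implicit function theorem applied to the characteristic equation, because at a point of $M'$ the relevant root is simple (as a root of the degree-$2$ polynomial of $A'$), so $\partial/\partial\lambda$ of the characteristic polynomial is nonzero there.

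The substantive point — and the main obstacle — is density of $M'$ in $M$, i.e., that $M^{\mbox{\tiny sing}}$ has empty interior. Here I would argue by contradiction: suppose $A$ has a single eigenvalue of multiplicity $4$ on an open set $U$. On $U$ write $\rho$ for this eigenvalue; it is a smooth function there (it equals $\tfrac14\tr A$). The Killing machinery of \S\ref{sec:killing} now applies: by Corollary \ref{cor:killing} the Hamiltonian vector fields of $\mu_1 = 2\rho$ and $\mu_2 = \rho^2$ are Killing on $U$; equivalently, $\rho$ and $\rho^2$ are both Hamiltonian functions for Killing fields, so by Lemma \ref{lem:easy} (applied on a connected component of $U$) $\rho$ is locally constant on $U$. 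Then $\Lambda = \tfrac14\grad_g(\tr A) = \grad_g \rho$ vanishes on $U$, and equation \eqref{eq:main} forces $\gnabla A = 0$ on $U$. Since $A$ is real-analytic in geodesic normal coordinates — or, more elementarily, since $\gnabla A$ vanishing on an open set together with $A$ solving the overdetermined linear system \eqref{eq:main} propagates — one concludes $A$ is parallel on all of the connected manifold $M$, contradicting the hypothesis that $A$ is non-parallel. (If one prefers to avoid analyticity: the set where $\gnabla A=0$ is closed, and $\eqref{eq:main}$ together with $\Lambda$ being determined by $A$ shows it is also open, hence all of $M$.) Therefore $M^{\mbox{\tiny sing}}$ has empty interior, which gives density of $M'$, completing the proof.
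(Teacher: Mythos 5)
Your proof follows the paper's almost verbatim up to the very last step: you use the same discriminant $f=\mu_1^2-4\mu_2$ of the characteristic polynomial to obtain the open/closed decomposition, and you reduce density of $M'$, exactly as the paper does, to showing that $\rho$ is locally constant on any open $U\subseteq M^{\mbox{\tiny sing}}$ via Corollary \ref{cor:killing} and Lemma \ref{lem:easy}. The one place where your argument breaks is the propagation of ``$A$ is parallel on $U$'' to ``$A$ is parallel on $M$''. Neither of the two mechanisms you offer works as stated. A K\"ahler metric is in general not real-analytic, so the claim that $A$ is real-analytic in normal coordinates is unjustified. And the open--closed argument fails: by \eqref{eq:main}, the set $\{p:\gnabla A|_p=0\}$ is exactly the zero set of $\Lambda=\tfrac14\grad_g(\tr A)$, i.e.\ the critical set of $\tr A$; this set is closed, but there is no reason for it to be open --- its openness near $U$ is precisely what you are trying to establish, not something that follows from ``$\Lambda$ being determined by $A$''.

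The correct propagation --- and the one the paper uses --- is one line away from ingredients you already invoked. By Corollary \ref{cor:killing} and the remark following it, $J\grad_g(\tr A)=2K_1$ is a Killing vector field on all of $M$; it vanishes on $U$ because $\tr A=4\rho$ is constant there; and a Killing vector field vanishing on a nonempty open subset of a connected pseudo-Riemannian manifold vanishes identically, since it satisfies the finite-type system $\nabla\nabla K=R\cdot K$ and is therefore determined by its $1$-jet at a single point. Hence $\Lambda\equiv 0$ on $M$, and \eqref{eq:main} then gives $\gnabla A\equiv 0$ on all of $M$, the desired contradiction. With this substitution your proof coincides with the paper's; the remaining points (continuity of the suitably ordered roots, smoothness on $M'$ via the implicit function theorem at simple roots of the degree-two characteristic polynomial of $A'$) are correct and in fact spelled out in more detail than in the paper.
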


\begin{proof}
Recall that at every point $p$ of $M$ the eigenvalues $\rho(p),\sigma(p)$ of $A:T_p M\rightarrow T_p M$ are solutions to the quadratic equation
$$
t^2-\mu_1(p) \, t +\mu_2(p) =0,
$$
where $\mu_1,\mu_2$ are defined in \eqref{eq:defsigmai}. Defining $f=\mu_1^2-4\mu_2$, we have that $M^r=f^{-1}((0,\infty))$ is the set of points $p$ where $A$ has two different real eigenvalues $\rho(p)\neq\sigma(p)$ and $M^{c}=f^{-1}((-\infty,0))$ is the set of points $q$ where $A$ has complex-conjugate eigenvalues $\rho(q)=\overline{\sigma(q)}$ with non-vanishing imaginary parts. Thus, $M$ is the disjoint union
$$
M=M^{r}\sqcup f^{-1}(0)\sqcup M^{c}
$$
of the open subsets $M^r,M^{c}$ and the closed subset $M^{sing}=f^{-1}(0)$ of points where $A$ has a single real eigenvalue $\rho$ of multiplicity $4$. Moreover, if $M^{sing
}$ contains an open subset $U$, we have that by Corollary \ref{cor:killing}, $\rho$ and $\rho^2$ are hamiltonian functions for Killing vector fields on $U$ and from Lemma \ref{lem:easy} it follows that $\rho$ is a constant. Then, the Killing vector field $J\mathrm{grad}_g(\tr\,A)$ vanishes on $U$ and hence on $M$, implying that $A$ is parallel on $M$. Consequently, excluding the case that $A$ is parallel, the set $M^{\mbox{\tiny sing}}$ does not contain an open subset, hence, $M'=M^r\cup M^{c}$ is dense in $M$.

On $M^r$, we can define functions $\rho,\sigma:M^r\rightarrow \mathbb{R}$ by the ordering $\rho(p)<\sigma(p)$ of the eigenvalues for all $p\in M^r$. On $M^{c}$ we define complex conjugate functions $\rho,\sigma:M^{c}\rightarrow \mathbb{C}$ by assuming that the imaginary part of the eigenvalue $\rho(q)$ is smaller than that of $\sigma(q)$ for all $q\in M^{c}$. The functions so defined can be extended to give continuous functions $\rho,\sigma:M\rightarrow \mathbb{C}$ which are smooth on $M'$.
\end{proof}

Of course, some of the sets $M^r$, $M^{c}$ or $M^{sing}$ might be empty. Note that Lemma \ref{lem:eigenvaluestructureA} also contains the case when on $M^r$, the endomorphism $A$ has a non-constant eigenvalue, say $\rho$, and a constant eigenvalue $\sigma=c$.

Notice that this lemma automatically excludes the case of a $2\times 2$ Jordan block (see \eqref{eq:propA1} and \eqref{eq:propA2}) from our further consideration. Indeed, if $A$ is conjugate to $\begin{pmatrix} A' & 0 \\ 0 & A'\end{pmatrix}$ with $A' = \begin{pmatrix} \rho  & 1 \\ 0  &\rho\end{pmatrix}$ on an open non-empty subset $U$, then $U\subset M^{sing}$ and, as we have just shown, $A$ is parallel.

Theorem \ref{thm:normalformwithv} we are going to prove gives a local description of c-projective vector fields in a neighbourhood of {\it almost every point}.  Now we are able to characterise such points explicitly in terms of the eigenvalues of $A$.  Consider the following subset
$$
M^\circ= \{ p\in M~|~  \rho(p)\ne \sigma(p) \ \mbox{ and } \ d\rho(p)\ne 0, \ d\sigma(p)\ne 0  \} \subseteq  M'
$$
or, if one of the eigenvalues, say $\sigma$, is constant on the whole of $M$:
$$
M^\circ = \{ p\in M~|~  \rho(p)\ne \sigma \ \mbox{ and } d\rho(p)\ne 0  \} \subseteq M'.
$$

\begin{lem}\label{lem:propM0}
 $M^\circ$ is open and dense in $M$.
\end{lem}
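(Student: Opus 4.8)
The plan is to show that $M^\circ$ is obtained from $M'$ by removing a closed, nowhere dense subset, and then invoke the density of $M'$ in $M$ established in Lemma \ref{lem:eigenvaluestructureA}. Openness of $M^\circ$ is immediate: on $M'$ the functions $\rho,\sigma$ are smooth (by Lemma \ref{lem:eigenvaluestructureA}), so the conditions $\rho(p)\neq\sigma(p)$, $d\rho(p)\neq 0$, $d\sigma(p)\neq 0$ (or just $\rho(p)\neq\sigma$, $d\rho(p)\neq 0$ in the case one eigenvalue is globally constant) are all open conditions; since $M'$ is open in $M$, so is $M^\circ$. The real content is density.

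For density, fix a point $p\in M$ and an arbitrary neighbourhood $U$ of $p$; I must find a point of $M^\circ$ in $U$. Since $M'$ is dense (Lemma \ref{lem:eigenvaluestructureA}) and open, $U\cap M'$ is a nonempty open set, so it suffices to prove that $M^\circ$ is dense in $M'$, i.e. that the "bad" set $B=M'\setminus M^\circ$ has empty interior in $M'$. The set $B$ consists of points where either $\rho=\sigma$, or $d\rho=0$, or $d\sigma=0$ (with the obvious modification in the constant-eigenvalue case, where the condition $\rho=\sigma$ never occurs on $M'$ and one only worries about $d\rho=0$). The first component $\{\rho=\sigma\}\cap M'$ is contained in $\partial M^r\cap \partial M^c$ and in particular has empty interior in $M'$, because $M^r$ and $M^c$ are disjoint open sets whose union is $M'$; on an open subset of $M'$ we are therefore entirely inside $M^r$ (where $\rho<\sigma$) or entirely inside $M^c$ (where $\rho=\bar\sigma$ with nonzero imaginary part), and in both cases $\rho\neq\sigma$. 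So it remains to rule out that $\{d\rho=0\}$ or $\{d\sigma=0\}$ contains an open subset of $M^r$ or of $M^c$.

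This is the crux, and it is handled by the same mechanism used in the proof of Lemma \ref{lem:eigenvaluestructureA}. Suppose $d\rho\equiv 0$ on some nonempty open $W\subseteq M'$; then $\rho$ is locally constant on $W$, hence constant on each component, so $\rho$ and $\rho^2$ are (trivially) Hamiltonian functions for Killing vector fields there. But by Corollary \ref{cor:killing} the functions $\mu_1=\rho+\sigma$ and $\mu_2=\rho\sigma$ are always Hamiltonians of Killing fields, whence $\sigma=\mu_1-\rho$ and $\sigma^2$ (via $\sigma^2 = \mu_1\sigma - \mu_2$, a combination of Hamiltonians of Killing fields, using that $\sigma$ itself is then Hamiltonian Killing) are also Hamiltonian for Killing vector fields; Lemma \ref{lem:easy} then forces $\sigma$ to be constant on $W$ as well. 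Thus both eigenvalues are constant on $W$, so $\tr A=\mu_1$ is constant on $W$, the Killing field $J\grad_g(\tr A)$ vanishes on $W$, hence $A$ is parallel on $W$ and therefore (by analyticity/the finite-type nature of \eqref{eq:main}, exactly as in Lemma \ref{lem:eigenvaluestructureA}) parallel on all of $M$ --- contradicting the hypothesis that $A$ is non-parallel. The same argument with the roles of $\rho$ and $\sigma$ interchanged rules out $d\sigma\equiv 0$ on an open set. (In the case where one eigenvalue, say $\sigma\equiv c$, is globally constant, one only needs the half of this argument showing $d\rho$ cannot vanish on an open set, and the conclusion $A$ parallel follows as before.) Hence $B$ has empty interior in $M'$, so $M^\circ$ is dense in $M'$, and therefore in $M$.

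The step I expect to be the main obstacle is the passage "$\rho$ constant on an open set $\Rightarrow$ $\sigma$ constant there": one needs to be a little careful that constancy of $\rho$ alone makes $\sigma$ (not merely $\mu_1,\mu_2$) a Hamiltonian of a Killing field before applying Lemma \ref{lem:easy} to $\sigma,\sigma^2$. This is where the linear structure $\sigma = \mu_1 - \rho$ and $\sigma^2 = \mu_1\sigma - \mu_2$ is used, together with the fact that the span of Hamiltonians of Killing fields is closed under the operations appearing here. Everything else is soft topology plus direct appeal to Corollary \ref{cor:killing} and Lemma \ref{lem:easy}.
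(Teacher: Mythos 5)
Your openness argument, and the reduction of density to showing that neither $\{d\rho=0\}$ nor $\{d\sigma=0\}$ contains a nonempty open subset of $M'$, are fine and agree with the paper (the locus $\{\rho=\sigma\}$ is in fact empty in $M'=M^r\sqcup M^c$, so that part is harmless). The gap is exactly at the step you flag as the crux. From $\rho\equiv c$ on an open set $W\subseteq M^r$ you want to conclude that $\sigma$ is constant on $W$ by applying Lemma \ref{lem:easy} to $\sigma$, and for that you claim $\sigma^2=\mu_1\sigma-\mu_2$ is a Hamiltonian of a Killing field. But $\mu_1\sigma$ is a \emph{product} of two Hamiltonians of Killing fields, and the space of such Hamiltonians is only a linear space, not closed under multiplication; on $W$ the Hamiltonians at your disposal, namely $1$, $\mu_1=c+\sigma$ and $\mu_2=c\sigma$, span only functions affine in $\sigma$, so $\sigma^2$ lies in that span only if $\sigma$ is already constant --- the argument is circular. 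Worse, the intermediate claim cannot be rescued by any purely local argument: the degenerate (order-one) case of Theorem \ref{thm:normalform} exhibits non-parallel $A\in\mathcal{A}(g,J)$ with one constant and one non-constant real eigenvalue, so ``$\rho$ constant on $W$ $\Rightarrow$ $A$ parallel on $W$'' is simply false in the situation where $\sigma$ is not assumed globally constant.

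The mechanism that actually works, and that the paper uses, is global rather than local. If $\rho\equiv c$ on $W\subseteq M^r$, consider the Killing vector field $K_c=X_{\mu_c}$ with $\mu_c=(c-\rho)(c-\sigma)$ from Corollary \ref{cor:killing}. Since $\mu_c\equiv 0$ on $W$, the Killing field $K_c$ vanishes on the open set $W$ and hence on all of the connected manifold $M$; therefore $(c-\rho)(c-\sigma)\equiv 0$ on $M$, which forces $c$ to remain an eigenvalue everywhere and ultimately $\rho\equiv c$ on $M$, contradicting the standing assumption that $\rho$ is a \emph{non-constant} eigenvalue (which is how $M^\circ$ is defined). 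Your argument does go through on $M^c$, where $\sigma=\bar\rho$ is automatically constant once $\rho$ is, and also in the case where $\sigma$ is globally constant (then both eigenvalues are constant on $W$, so $\tr A$ is constant there, $\Lambda=0$, and $A$ is parallel on $W$, hence on $M$); but the case $W\subseteq M^r$ with both eigenvalues non-constant genuinely requires the global propagation of a vanishing Killing field, not Lemma \ref{lem:easy}.
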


\begin{proof} Clearly, $M^\circ$ is open. To prove that $M^\circ$ is dense
assume, by contradiction, that the differential $d\rho$ of the non-constant eigenvalue $\rho$ vanishes on some open subset $U\subseteq M'$. Suppose first that $U\subseteq M^r$. Then, $\rho$ is equal to a real constant $c$ on $U$. Let $K_t$ be the Killing vector field corresponding to the hamiltonian function $\mu_t=t^2-\mu_1t+\mu_2$, see \eqref{eq:defsigmai}. Then, $K_c$ vanishes on $U$ since $\mu_c$ vanishes on $U$ and therefore, $K_c$ vanishes on the whole $M$ implying that $\rho=c$ on $M$ contradicting the assumption that $\rho$ is non-constant. Similarly, if $U\subseteq M^{c}$, we have that $\rho$ and hence $\sigma=\bar{\rho}$ are equal to complex-conjugate constants on $U$. Then, $A$ is parallel on $U$, hence parallel on $M$ and therefore $\rho$ is constant on the whole $M$ contradicting the assumptions.
\end{proof}

\begin{lem}\label{lem:eigenvaluegradients}
Let $(M,g,J)$ be a connected K\"ahler surface and let $A\in \mathcal{A}(g,J)$. Then the gradients of the eigenvalues $\rho,\sigma$ of $A$ on $M'$ are contained in the eigenspaces of $A$ corresponding to $\rho,\sigma$ respectively. In particular, the gradients of the (non-constant) eigenvalues  $\rho,\sigma$ are linearly independent at each point $p\in M^\circ$.
\end{lem}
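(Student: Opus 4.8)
The plan is to work on the dense open set $M'$ where the eigenvalues $\rho,\sigma$ are smooth, and to use the defining equation \eqref{eq:main} together with the structure result \eqref{eq:propA1}--\eqref{eq:propA2}. First I would fix a point $p\in M'$ and apply \eqref{eq:main} with $X$ taken to be $\mathrm{grad}_g\rho$ (and separately $\mathrm{grad}_g\sigma$). The idea is to contract \eqref{eq:main} cleverly so that the eigenvalue derivatives appear. Concretely, since $A$ is $g$-symmetric and $J$-commuting, on $M'$ we have a $g$-orthogonal splitting $TM=E_\rho\oplus E_\sigma$ into $J$-invariant eigenbundles (in the complex-conjugate region one passes to the complexification and the same computation goes through). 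Writing $\mathrm{grad}_g\rho=\xi_\rho+\xi_\sigma$ with $\xi_\rho\in E_\rho$, $\xi_\sigma\in E_\sigma$, the goal is to show $\xi_\sigma=0$, and symmetrically for $\sigma$.

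The key computation: differentiate the identity $\mu_1=\rho+\sigma=\operatorname{tr}A$ and use that, by \eqref{eq:main}, $\gnabla A$ is completely determined by $\Lambda=\tfrac14\operatorname{grad}_g(\operatorname{tr}A)$. One extracts the derivative of an eigenvalue along a vector $Y$ by the standard formula $Y(\rho)=\tfrac{1}{?}g((\gnabla_Y A)u,u)$ for $u$ a unit (or normalized) eigenvector in $E_\rho$, which by \eqref{eq:main} equals a combination of $g(Y,u)$, $g(\Lambda,u)$, $g(JY,Ju)$ etc.; because $u\in E_\rho$ and the eigenspaces are $J$-invariant and $g$-orthogonal, the cross-terms vanish unless $Y$ has a component in $E_\rho$. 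This forces $d\rho$ to annihilate $E_\sigma$, i.e.\ $\operatorname{grad}_g\rho\in E_\rho$; the analogous argument gives $\operatorname{grad}_g\sigma\in E_\sigma$. The ``in particular'' statement then follows immediately: at $p\in M^\circ$ both gradients are nonzero (that is the definition of $M^\circ$), and being nonzero vectors in the $g$-orthogonal subspaces $E_\rho$, $E_\sigma$ respectively, they are linearly independent.

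For the main body of the argument I would carry out the steps in this order: (1) on $M'$ set up the $J$-invariant $g$-orthogonal eigendecomposition and record that $J$ preserves each $E_\lambda$ and $g(E_\rho,E_\sigma)=0$; (2) derive the pointwise formula expressing $Y(\lambda)$ in terms of $(\gnabla_Y A)$ evaluated on eigenvectors, either by differentiating the characteristic-polynomial coefficients $\mu_i$ from \eqref{eq:defsigmai} or by perturbation of a smoothly chosen eigenvector field; (3) substitute \eqref{eq:main} and use the orthogonality/$J$-invariance to kill all terms except the one forcing $\operatorname{grad}_g\lambda\in E_\lambda$; (4) conclude linear independence on $M^\circ$ from Lemma~\ref{lem:propM0} and the definition of $M^\circ$. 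In the complex-conjugate region $M^c$ one runs the same computation over $\mathbb{C}$ on the $\pm\mathrm{i}$-free part, using the form \eqref{eq:propA2} (middle case), and the real gradients $d\rho,d\sigma$ are complex conjugates of each other; linear independence over $\mathbb{R}$ still holds since a nonzero complex number and its conjugate are $\mathbb{R}$-linearly independent exactly when the imaginary part is nonzero, which holds on $M^c$.

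The main obstacle I anticipate is a bookkeeping one rather than a conceptual one: choosing the right contraction of \eqref{eq:main} so that only $d\lambda$ survives, and handling the weighted/tensorial bundle carefully (the solutions live in $S^2_J T^*M(\tfrac{1}{n+1})$, but via $\varphi_g$ one works with the honest endomorphism $A\in\mathcal{A}(g,J)$, so this is harmless). One must also be slightly careful that the eigenvalue is smooth where used — this is exactly why the statement is restricted to $M'$ — and that, when one eigenvalue is globally constant, the corresponding eigenbundle statement is vacuous but the conclusion about the non-constant eigenvalue's gradient lying in its own eigenspace still holds by the same contraction. None of these require lengthy calculation; the argument is essentially linear algebra fed by \eqref{eq:main}.
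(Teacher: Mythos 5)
Your proposal is correct and follows essentially the same route as the paper: the paper differentiates the eigenvector equation $AY=\sigma Y$, substitutes \eqref{eq:main}, and contracts with a non-null eigenvector $Y$, which is precisely your perturbation formula for $X(\sigma)$ combined with the $g$-orthogonality and $J$-invariance of the eigenspaces. The complexification in the region of complex-conjugate eigenvalues and the deduction of linear independence on $M^\circ$ from the eigenspace decomposition also match the paper's argument.
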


\begin{proof}
Let $Y\in \Gamma(T^{\mathbb{C}}M)$  {(where $T^{\C}M=TM\otimes_{\R}\C$)} be a smooth complex vector field such that $AY=\sigma Y$. Taking the covariant derivative of the equation $AY=\sigma Y$ with respect to a vector field $X\in \Gamma(T^{\mathbb{C}}M)$ and inserting \eqref{eq:main}, we obtain
\begin{align}
(A-\sigma\Id)\nabla_X Y=X(\sigma)Y-g(Y,X)\Lambda-g(Y,\Lambda)X-g(Y,JX)J\Lambda-g(Y,J\Lambda)JX,\label{eq:covderiveigvec}
\end{align}
where $\Lambda=\tfrac{1}{4}\mathrm{grad}_g (\tr\,A)$ and all operations are extended complex-linearly from $TM$ to $T^{\mathbb{C}}M$. Now insert a vector field $X$ such that $AX=\rho X$ into this equation, where we assume that $\rho\neq \sigma$. Using $g(X,Y)=g(JX,Y)=0$, we obtain
$$(A-\sigma\Id)\nabla_X Y=X(\sigma)Y-g(Y,\Lambda)X-g(Y,J\Lambda)JX.$$
We can choose $Y$ in such a way that it is not a null vector, i.e. $g(Y,Y)\neq 0$. Inserting $Y$ together with the last equation into the metric, we obtain $0=X(\sigma)=g(\mathrm{grad}_g \sigma,X)$. This proves the lemma.
\end{proof}

\begin{cor}\label{cor:commutekilling}
Let $(M,g,J,\omega=g(J.,.))$ be a K\"ahler surface and let $A\in \mathcal{A}(g,J)$. Let $\mu_1=\rho+\sigma$ and $\mu_2=\rho\sigma$ and  $K_1=X_{\mu_1},K_2=X_{\mu_2}$ be the Killing vector fields from Corollary \ref{cor:killing} and define $V_1=-JK_1,V_2=-JK_2$. Then, we have the following:
\begin{enumerate}
\item The distributions $D=\mathrm{span}\{V_1,V_2\}$ and $JD=\mathrm{span}\{K_1,K_2\}$ defined on $M^\circ$ are orthogonal to each other and the restriction of the metric $g$ to each of these distributions is non-degenerate.
\item The vector fields $V_1,V_2,K_1,K_2$ are mutually commuting.
\item The leaves of the integrable distribution $D$ are totally geodesic.
\end{enumerate}
\end{cor}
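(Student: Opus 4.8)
The plan is to derive everything from Lemma \ref{lem:eigenvaluegradients} together with the fact (Corollary \ref{cor:killing}) that $K_1=X_{\mu_1}$ and $K_2=X_{\mu_2}$ are Killing, where $\mu_1=\rho+\sigma$ and $\mu_2=\rho\sigma$. The first observation is that, since $\mu_1,\mu_2$ are (on $M^\circ$) functionally independent functions of the two functionally independent eigenvalues $\rho,\sigma$, the span of $\mathrm{grad}_g\mu_1,\mathrm{grad}_g\mu_2$ equals the span of $\mathrm{grad}_g\rho,\mathrm{grad}_g\sigma$, which by Lemma \ref{lem:eigenvaluegradients} is exactly the direct sum of the $\rho$- and $\sigma$-eigenspaces of $A$. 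Hence $D=\mathrm{span}\{V_1,V_2\}=\mathrm{span}\{J\,\mathrm{grad}_g\mu_1,J\,\mathrm{grad}_g\mu_2\}$ is the image under $J$ of this rank-two distribution, and $JD=\mathrm{span}\{K_1,K_2\}$ is the distribution spanned by $\mathrm{grad}_g\rho$ and $\mathrm{grad}_g\sigma$ themselves. For part (1): the two eigenspaces of $A$ corresponding to the distinct eigenvalues $\rho\ne\sigma$ are $g$-orthogonal (standard, since $A$ is $g$-symmetric) and $g$ restricted to each is nondegenerate (again because $A$ is $g$-symmetric and the eigenvalues are distinct, cf.~the canonical form \eqref{eq:propA1}--\eqref{eq:propA2}); applying the $J$-invariance of $g$ and the fact that $J$ swaps $D$ and $JD$ while preserving the eigenspace decomposition (since $A$ commutes with $J$) gives the orthogonality of $D$ and $JD$ and nondegeneracy of $g$ on each.

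For part (2), that $V_1,V_2,K_1,K_2$ mutually commute: the Poisson bracket of the Hamiltonians $\mu_1,\mu_2$ with respect to $\omega$ is $\{\mu_1,\mu_2\}=\omega(X_{\mu_1},X_{\mu_2})=d\mu_2(K_1)$, and since $\mathrm{grad}_g\mu_1,\mathrm{grad}_g\mu_2$ lie in orthogonal eigenspaces of $A$ (the $\rho$- and $\sigma$-eigenspaces are built from $\mathrm{grad}_g\rho$ and $\mathrm{grad}_g\sigma$ which are orthogonal), one gets $\omega(X_{\mu_1},X_{\mu_2})=g(J\,\mathrm{grad}_g\mu_1,\mathrm{grad}_g\mu_2)=0$; here one uses that $J$ preserves each eigenspace so $J\,\mathrm{grad}_g\mu_1$ is still orthogonal to $\mathrm{grad}_g\mu_2$. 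Hence $[K_1,K_2]=X_{\{\mu_1,\mu_2\}}=0$. Since $K_1,K_2$ are Killing, $\mathcal L_{K_i}g=0$ and $\mathcal L_{K_i}J=0$ (the flow of a c-projective, hence in particular Killing, field on a Kähler manifold preserves $J$; alternatively $K_i$ are Hamiltonian for functions built from $A$, and the flow preserves $A$), so $\mathcal L_{K_i}(J K_j)=J[K_i,K_j]=0$, i.e.~$[K_i,V_j]=0$; and $[V_1,V_2]=[JK_1,JK_2]$, which on a Kähler manifold (integrability of $J$, vanishing Nijenhuis tensor) reduces to $J[JK_1,K_2]+J[K_1,JK_2]+[K_1,K_2]$ up to sign, and each summand vanishes by what we just showed. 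This yields mutual commutativity of all four fields.

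For part (3), that the leaves of $D$ are totally geodesic: $D$ is integrable because $[V_1,V_2]=0$. To see total geodesy, note $D=\mathrm{span}\{V_1,V_2\}$ is the orthogonal complement in each tangent space of $JD=\mathrm{span}\{\mathrm{grad}_g\rho,\mathrm{grad}_g\sigma\}$, and this latter is in turn $J$-related to $D$; equivalently $D$ is spanned by the Hamiltonian vector fields $K_1,K_2$ acted on by $-J$. The clean way is: for Killing fields $K$ one has $g(\nabla_X K,Y)+g(\nabla_Y K,X)=0$; using the commuting-integrable frame $V_1,V_2$ of $D$ together with orthogonality $g(D,JD)=0$ and the explicit form of $\nabla A$ from \eqref{eq:main}, one computes $g(\nabla_{V_i}V_j,W)=0$ for every $W\in JD=D^\perp$, since the right-hand side of \eqref{eq:main} contracted appropriately lands back in $D\oplus JD$ in a way that kills the $D^\perp$-component. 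I expect part (3) to be the main technical obstacle: parts (1) and (2) are essentially linear algebra plus the Killing property, but establishing total geodesy requires genuinely using the differential equation \eqref{eq:main} (through Lemma \ref{lem:eigenvaluegradients} and equation \eqref{eq:covderiveigvec}) to control the second fundamental form, rather than merely the eigenspace decomposition pointwise.
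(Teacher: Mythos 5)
Your parts (1) and (2) follow essentially the paper's route: Lemma \ref{lem:eigenvaluegradients} places the eigenvalue gradients in the corresponding eigenspaces of $A$, which gives $\omega(K_1,K_2)=0$, hence Poisson- and then Lie-commutativity, and holomorphicity of Hamiltonian Killing fields handles the remaining brackets. Two small slips: you have $D$ and $JD$ interchanged (since $X_H=-J\,\grad_g H$, the fields $V_i=-JK_i$ equal $\pm\grad_g\mu_i$, so it is $D$, not $JD$, that is spanned by the eigenvalue gradients, and $JD$ that is spanned by the Killing fields); and $\grad_g\mu_1,\grad_g\mu_2$ do not each lie in a single eigenspace of $A$ --- each is a combination of $\grad_g\rho$ and $\grad_g\sigma$ --- but the cross terms vanish by eigenspace orthogonality and the diagonal terms by skewness of $\omega$, so $\omega(K_1,K_2)=0$ still follows and (1), (2) are fine.

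Part (3) is where the gap is, and it is not where you expected it. When $A$ has two non-constant eigenvalues, no appeal to \eqref{eq:main} is needed: $TM=D\perp JD$ with $JD$ spanned by Killing fields, and for $X,Y\in\Gamma(D)$ and $K$ Killing with $g(X,K)=g(Y,K)=0$, metric compatibility and the Killing equation give $g(\nabla_XY,K)=-g(Y,\nabla_XK)=g(X,\nabla_YK)=-g(\nabla_YX,K)$, while integrability gives $g(\nabla_XY-\nabla_YX,K)=g([X,Y],K)=0$; hence $g(\nabla_XY,K)=0$. Your sketch lists exactly these ingredients but then defers to a vague claim that the right-hand side of \eqref{eq:main} ``lands back in $D\oplus JD$'', which is vacuous here since $D\oplus JD=TM$. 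The genuinely missing case is when $A$ has order one, i.e.\ one non-constant eigenvalue $\rho$ and one constant eigenvalue $c$ (a case the corollary covers, cf.\ the remark after Lemma \ref{lem:eigenvaluestructureA}): then $\mu_1=\rho+c$ and $\mu_2=c\rho$, so $D=\mathrm{span}\{\grad_g\rho\}$ is one-dimensional, $D\oplus JD$ is only the $\rho$-eigenspace of $A$, and the argument above only shows that $\nabla_VV$ has no component along $K=JV$. To kill the component in the $c$-eigenspace one must use equation \eqref{eq:covderiveigvec} (i.e.\ genuinely use \eqref{eq:main}) to see that $\nabla_VX$ stays in the $c$-eigenspace for $X$ a $c$-eigenvector, whence $g(\nabla_VV,X)=-g(V,\nabla_VX)=0$. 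Your proposal does not address this case at all; it is precisely here, and not in the order-two case, that the differential equation enters the paper's proof.
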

\begin{proof}
When proving this lemma, we take into account the algebraic representations \eqref{eq:propA1} and \eqref{eq:propA2}. 

$(1)$ By definition $D$ is spanned by $V_1=\mathrm{grad}_g (\rho)+\mathrm{grad}_g(\sigma)$ and $V_2=\rho\,\mathrm{grad}_g (\sigma)+\sigma\, \mathrm{grad}_g(\rho)$. From Lemma \ref{lem:eigenvaluegradients}, we immediately obtain $g(V_1,JV_2)=0$.

Since $D$ and $JD$ are orthogonal, the restriction of $g$ to these distributions necessarily is non-degenerate.

$(2)$ It follows immediately from the first part, that $\omega(K_1,K_2)=0$. Hence, the functions $\mu_i$ Poisson commute, i.e. $\{\mu_1,\mu_2\}=0$, and thus $$[K_1,K_2]=[X_{\mu_1},X_{\mu_2}]=X_{\{\mu_1,\mu_2\}}=0.$$
Since the $K_i$ are hamiltonian Killing vector fields, they are holomorphic which implies that also the $V_i$ are holomorphic. Then, the vector fields $K_i,V_j$ mutually commute.

$(3)$ It follows from (2) that the distributions $D$ and $JD$ are integrable. If $A$ has two non-constant eigenvalues we have $TM=D\perp JD$. Then, since $JD$ is spanned by Killing vector fields, the leaves of the distribution $D$ are totally geodesic. If $A$ has a non-constant eigenvalue $\rho$ and a constant eigenvalue $c$, we still have that the one-dimensional leaves of $D$ are totally geodesic. Indeed, since $D$ is spanned by $V=\mathrm{grad}_g\,\rho$, we have to show that $\nabla_V V$ is proportional to $V$. Since $K=JV$ is Killing, we obtain $g(\nabla_V V,K)=-g( V,\nabla_V K)=0,$ thus $\nabla_V V$ has no components in the direction of $K$. Let $X$ be an eigenvector field corresponding to the constant eigenvalue $c$. Using Lemma \ref{lem:eigenvaluegradients} and equation \eqref{eq:covderiveigvec} from its proof, we see that $\nabla_V X$ is contained in the eigenspace of $A$ corresponding to the eigenvalue $c$. Then, $g(\nabla_V V,X)=-g( V,\nabla_V X)=0$ and the claim follows.
\end{proof}

\section{Normal forms for $4$-dimensional  K\"ahler structures admitting solutions to \eqref{eq:main}}\label{sec:localclass}

Our first goal is to  classify the four-dimensional local K\"ahler structures satisfying $d(g,J)>1$ and then look among the obtained K\"ahler structures for those admitting a c-projective vector field.

Theorem \ref{thm:normal form} below provides the desired classification. The new part is the complex Liouville case which occurs in split-signature only. The Riemannian part is a special case of the classification of Hamiltonian $2$-forms~\cite{MR2228318}. Besides a proof for the complex Liouville case we will also provide an alternative proof for the Liouville case. The degenerate case cannot be proved with our method but is contained in the statement for the sake of completeness.

\begin{thm}\label{thm:normalform}
Let $(M,g,J,\omega)$ be a K\"ahler surface. Suppose that $A\in \mathcal{A}(g,J)$ is non-parallel. Then, in a neighborhood of almost every point, we have one of the following cases:

\begin{itemize}

\item Liouville case: there are coordinates $x,y,s,t$ and functions $\rho(x),\sigma(y)$ such that
\begin{align}
\begin{array}{c}
g=(\rho-\sigma)(dx^2+\epsilon dy^2)+\frac{1}{\rho-\sigma}\left[(\rho')^2(ds+\sigma dt)^2+ \epsilon(\sigma')^2(ds+\rho dt)^2\right],\vspace{1mm}\\
\omega=\rho'dx\wedge (ds+\sigma dt)+\sigma'dy\wedge ( ds+\rho dt)
\weg{J\partial_x=\frac{1}{\rho'}(\rho \partial_s-\partial_t),\,\,\,J\partial_y=\frac{\epsilon}{\sigma'}(-\sigma\partial_s+\partial_t),\vspace{1mm}\\
J\partial_s=-\frac{1}{\rho-\sigma}(\rho'\partial_x+\epsilon\sigma'\partial_y),\,\,\,J\partial_t=-\frac{1}{\rho-\sigma}(\rho'\sigma\partial_x+\epsilon\rho\sigma'\partial_y),}
\end{array}\label{eq:normalform1}
\end{align}
and
\begin{align}
A=\rho \partial_x\otimes dx+\sigma\partial_y \otimes dy+(\rho+\sigma)\partial_s\otimes ds+\rho\sigma\partial_s\otimes dt-\partial_t\otimes ds,\label{eq:normalformA1}
\end{align}
where $\epsilon=1$ in case of positive signature and $\epsilon=-1$ in case of split signature.

\item Complex Liouville case: there are coordinates $z=x+iy,s,t$ and a holomorphic function $\rho(z)$ such that
\begin{align}
\begin{array}{c}
g=\frac{1}{4}(\bar{\rho}-\rho)(dz^2-d\bar{z}^2)+\frac{4}{\rho-\bar{\rho}}\left(\left(\frac{\partial\bar{\rho}}{\partial\bar{z}}\right)^2\left(ds+\rho\,dt\right)^2-\left(\frac{\partial \rho}{\partial z}\right)^2\left(ds+\bar{\rho}\,dt\right)^2\right)
\weg{g=2\mathcal{I}dxdy+\frac{4}{I}\left[2\mathcal{R}_x\mathcal{R}_y ds^2+2(\mathcal{I}(\mathcal{R}_x\mathcal{I}_y+\mathcal{R}_y\mathcal{I}_x)+2\mathcal{R}\mathcal{R}_x\mathcal{R}_y)dsdt+2(\mathcal{R}\mathcal{R}_x+\mathcal{I}\mathcal{I}_x)(\mathcal{R}\mathcal{R}_y+\mathcal{I}\mathcal{I}_y)dt^2\right]},\vspace{1mm}\\
\omega=\frac{\partial \rho}{\partial z}dz\wedge (ds+\bar{\rho}dt)+\frac{\partial \bar{\rho}}{\partial \bar{z}}d\bar{z}\wedge (ds+\rho dt)
\weg{J\partial_z=-\left(4\frac{\partial \rho}{\partial z}\right)^{-1}\left(\rho\,\partial_s-\partial_t\right),\,\,\,J\partial_{\bar{z}}=-\left(4\frac{\partial \rho}{\partial z}\right)^{-1}\left(\bar{\rho}\,\partial_s-\partial_t\right),\vspace{1mm}\\
J\partial_s=-\frac{4}{\rho-\bar{\rho}}\left(\frac{\partial\bar{\rho}}{\partial\bar{z}}\partial_{\bar{z}}-\frac{\partial \rho}{\partial z}\partial_{z}\right),\,\,\,J\partial_t=-\frac{4}{\rho-\bar{\rho}}\left(\frac{\partial\bar{\rho}}{\partial\bar{z}}\rho\partial_{\bar{z}}-\frac{\partial \rho}{\partial z}\bar{\rho}\partial_{z}\right),}
\end{array}\label{eq:normalform2}
\end{align}
and
\begin{align}
A=\rho\, \partial_z\otimes dz+\bar{\rho}\,\partial_{\bar{z}} \otimes d\bar{z}+(\rho+\bar{\rho})\partial_s\otimes ds+\rho\bar{\rho}\,\partial_s\otimes dt-\partial_t\otimes ds.\label{eq:normalformA2}
\end{align}

\item Degenerate case: there are coordinates $x,t,u_1,u_2$, a function $\rho(x)$ and a positively or negatively definite 2D-K\"ahler structure $(h,j,\Omega=h(j.,.))$ on the domain $\Sigma\subseteq \mathbb{R}^2$ of $u_1,u_2$ such that
\begin{align}
\begin{array}{c}
g=(c-\rho) h+(\rho-c)d x^2+\frac{(\rho')^2}{\rho-c}\theta^2,\,\,\,\omega=(c-\rho)\Omega+ \rho' dx \wedge \theta
\end{array}
\label{eq:normalform3}
\end{align}
and
\begin{align}
gA=c(c-\rho)h+\rho\frac{(\rho')^2}{\rho-c}\theta^2+\rho(\rho-c)d x^2,\label{eq:normalformA3}
\end{align}
where $\theta=dt-\tau$ and $\tau$ is a one-form on $\Sigma$ such that $d\tau=\Omega$.

\end{itemize}
Conversely, let $(g,\omega)$ and $A$ be given on some open subset of $\mathbb{R}^4$ as in one of the cases above. Then, $(g,J,\omega)$ defines a K\"ahler structure and $A\in \mathcal{A}(g,J)$.

\end{thm}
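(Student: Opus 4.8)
The plan is to prove Theorem~\ref{thm:normalform} in two directions: first the ``forward'' classification, that near almost every point a non-parallel $A\in\mathcal{A}(g,J)$ forces $(g,\omega,A)$ into one of the three normal forms, and then the (easier) ``converse'' direction, that each listed triple genuinely defines a K\"ahler structure with $A\in\mathcal{A}(g,J)$. For the forward direction I would work on the open dense set $M^\circ$ from Lemma~\ref{lem:propM0}, and split according to the three algebraic types in \eqref{eq:propA2}, noting that the Jordan-block case is already excluded by Lemma~\ref{lem:eigenvaluestructureA} and the discussion following it. In each case one has canonically defined Killing vector fields $K_1=X_{\mu_1}$, $K_2=X_{\mu_2}$ (Corollary~\ref{cor:killing}) together with $V_i=-JK_i$, and Corollary~\ref{cor:commutekilling} tells us these four fields pairwise commute, $D=\operatorname{span}\{V_1,V_2\}$ and $JD=\operatorname{span}\{K_1,K_2\}$ are orthogonal, $g$ is non-degenerate on each, and the leaves of $D$ are totally geodesic.

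For the \emph{Liouville case} ($A$ with two distinct real eigenvalues $\rho<\sigma$): by Lemma~\ref{lem:eigenvaluegradients} the gradients of $\rho,\sigma$ are eigenvectors of $A$ and hence span $D$; choosing $\rho,\sigma$ themselves as two of the coordinates and the flow parameters $s,t$ of the commuting Killing fields $K_1,K_2$ as the remaining two diagonalises the situation. One then uses \eqref{eq:main} directly: writing $\Lambda=\tfrac14\operatorname{grad}_g(\rho+\sigma)$ and feeding eigenvectors of $A$ into \eqref{eq:covderiveigvec}, one obtains ODEs forcing $g(\operatorname{grad}_g\rho,\operatorname{grad}_g\rho)$ to depend on $\rho$ alone (and symmetrically for $\sigma$), which after reparametrising $\rho=\rho(x)$, $\sigma=\sigma(y)$ and absorbing the freedom into the functions $F,G$ yields precisely \eqref{eq:normalform1}; the formula \eqref{eq:normalformA1} for $A$ then just records that $\partial_x,\partial_y$ are the $\rho,\sigma$-eigendirections and that on $JD$ the operator $A$ acts with characteristic polynomial $t^2-\mu_1 t+\mu_2$. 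The \emph{complex Liouville case} is the genuinely new one and I would treat it in parallel: here $\rho=\overline{\sigma}=\mathcal R+\i\mathcal I$ with $\mathcal I\neq 0$, $g|_D$ is split, and the key extra input is that the Killing/holomorphicity structure forces $\rho$ to satisfy the Cauchy--Riemann equations, i.e. $\rho$ is a holomorphic function of $z=x+\i y$; the complexified version of the same eigenvector computation in \eqref{eq:covderiveigvec} gives \eqref{eq:normalform2} and \eqref{eq:normalformA2}. The \emph{degenerate case} (one constant eigenvalue $c$, one non-constant $\rho$) is stated for completeness and I would not reprove it — the $2$-dimensional leaves transverse to $\operatorname{grad}_g\rho$ carry a K\"ahler structure $(h,j,\Omega)$, and the normal form \eqref{eq:normalform3} follows from the one-dimensional totally geodesic leaves of $D$ established in Corollary~\ref{cor:commutekilling}(3); the citation to \cite{MR2228318} covers it.

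For the converse, given $(g,\omega,A)$ in any of the three forms, one checks by a direct (if somewhat lengthy) computation that $g(J\cdot,\cdot)=\omega$ with $J^2=-\Id$ and $\gnabla J=0$, so that the structure is K\"ahler, and that $A$ is $g$-symmetric, commutes with $J$, and satisfies \eqref{eq:main} with $\Lambda=\tfrac14\operatorname{grad}_g(\operatorname{tr}A)$ — equivalently, that the associated $(1,1)$-form is a Hamiltonian $2$-form; this is a finite Christoffel-symbol verification in the explicit coordinates. The main obstacle, and the step deserving the most care, is the complex Liouville case: one must show that the ``complex eigenvalue'' $\rho$ is forced to be holomorphic, handle the split-signature degeneracy of $g|_D$ (null directions make some of the normalisations used in the proof of Lemma~\ref{lem:eigenvaluegradients} delicate — one must choose the complexified eigenvector fields so as to avoid null vectors), and check that the reality conditions match up so that the final formulas \eqref{eq:normalform2}--\eqref{eq:normalformA2} define a genuine real K\"ahler metric rather than a merely formal complexification.
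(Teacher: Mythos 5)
Your setup --- working on $M^\circ$, using the commuting frame $V_1,V_2,K_1,K_2$ from Corollary \ref{cor:commutekilling}, the orthogonal splitting $TM=D\perp JD$, the exclusion of the Jordan block, and the citation of \cite{MR2228318} for the degenerate case --- matches the paper. The gap is in the order-two cases, which are the substance of the theorem. The paper's key step is the observation that, because the leaves of $D$ are totally geodesic, the restriction of \eqref{eq:main} to a leaf $\mathcal{L}$ becomes the \emph{real} projective-equivalence equation \eqref{eq:mainreal} for the pair $(g_{\mathcal{L}},A_{\mathcal{L}})$; this imports wholesale the known two-dimensional classification of projectively equivalent surface metrics (the Liouville and complex Liouville coordinates of the appendix of \cite{MR2892455}), which is precisely what delivers the separation of variables $\rho=\rho(x)$, $\sigma=\sigma(y)$ in the real case and the holomorphy of $\rho(z)$ in the complex case; the explicit transition matrix $\partial(x,y)/\partial(\tilde x,\tilde y)$ then produces the $(s,t)$-block of $g$ and the companion-matrix form of $A|_{JD}$ in \eqref{eq:normalformA1}, \eqref{eq:normalformA2}. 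Your proposal replaces this by ``feeding eigenvectors into \eqref{eq:covderiveigvec} to obtain ODEs'', i.e.\ by rederiving the two-dimensional theorem from scratch, but the one concrete intermediate claim you make is false: from \eqref{eq:normalform1} one reads off $g(\grad_g\rho,\grad_g\rho)=(\rho')^2/(\rho-\sigma)$, which depends on $\sigma$ as well as on $\rho$; the correct invariant statement is that $(\rho-\sigma)\,g(\grad_g\rho,\grad_g\rho)$ is a function of $\rho$ alone. Establishing that statement, and its complex analogue, \emph{is} the Liouville normal form, and your sketch asserts it rather than proves it. (The reference to ``absorbing the freedom into $F,G$'' also conflates \eqref{eq:normalform1}, which contains no such functions, with the normal forms of Theorem \ref{thm:normalformwithv}.)

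In particular, for the complex Liouville case --- which you correctly single out as the genuinely new part --- the sentence ``the Killing/holomorphicity structure forces $\rho$ to satisfy the Cauchy--Riemann equations'' is the conclusion, not an argument: in the paper the holomorphy of $\rho$ is exactly what the complex Liouville coordinates of the two-dimensional projective classification provide, and the Cauchy--Riemann equations are then \emph{used} to simplify the transformed blocks $g'$ and $A'$. If you wish to avoid citing the two-dimensional result, you must carry out that derivation (including the split-signature subtleties with null eigenvectors that you rightly flag); as written, the proposal leaves the central step of the proof unproved. The converse direction and the treatment of the degenerate case are fine.
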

\begin{remark}
The K\"ahler structures $(g,\omega)$ in Theorem \ref{thm:normalformwithv} can be obtained as a special case of the metrics in Theorem \ref{thm:normalform} by a change of coordinates, i.e. in the Liouville case we define
$$
d x_{\mbox{\tiny new}}=\tfrac{1}{F(x)}d x, \quad d y_{\mbox{\tiny new}}=\tfrac{1}{G(y)}d y,
$$
in the complex Liouville case we define
$$
d z_{\mbox{\tiny new}}=\tfrac{1}{F(z)}d z,
$$
and in the degenerate case we define
$$
d x_{\mbox{\tiny new}}=\tfrac{1}{F(x)}d x,
$$
to pass from the coordinates in Theorem \ref{thm:normalform} to the coordinates in Theorem \ref{thm:normalformwithv}. 

Note that the formulas for $g$ and $A$ above yield the metrics \eqref{eq:ghat1}--\eqref{eq:ghat3} by solving equation \eqref{eq:defA} for $\hat{g}$.
\end{remark}

In what follows the number of non-constant eigenvalues of $A$ will be called the \emph{order} of $A$. We give the proof of Theorem \ref{thm:normalform} for any point from the open and dense subset $M^\circ \subset M$ introduced in \S\ref{sec:killing}.

\subsection{Proof of Theorem \ref{thm:normalform} for $A$ of order two}

Let $(M,g,J)$ be a K\"ahler surface and assume $A \in \mathcal{A}(g,J)$ has order two, i.e. it has two non-constant eigenvalues $\rho, \sigma$ so that, by Corollary \ref{cor:commutekilling}, the vector fields
\begin{align}
V_1=\grad_g(\rho+\sigma)\quad \mbox{ and }\quad V_2=\grad_g(\rho\sigma)\label{eq:killingexplicit}
\end{align}
span a rank $2$-subbundle $D\subset TM^{\circ}$ on the open dense subset $M^{\circ}\subset M$ and the orthogonal complement $D^{\perp}$ satisfies $D^{\perp}=JD$. Consequently, we have an orthogonal direct sum decomposition $TM^{\circ}=D\perp JD$ with respect to which the metric $g$ decomposes as
\begin{equation}\label{decompg}
g=g|_{D}\oplus g|_{JD}.
\end{equation}

Recall from Corollary \ref{cor:commutekilling} that the vector fields $V_i,K_i=JV_i$ all Lie commute and the subbundle $D$ is Frobenius integrable and gives rise to a codimension two foliation $\mathcal{F}$ on $M^\circ$ with totally geodesic leaves. On each leaf $\mathcal{L} \in \mathcal{F}$, the bundle metric $g|_{D}$ restricts to become a pseudo-Riemannian metric on $\mathcal{L}$ which agrees with the pullback $g_{\mathcal{L}}$ of $g$ to $\mathcal{L}$. Clearly, also $A$ decomposes as
\begin{equation}\label{decompA}
A=A|_{D}\oplus A|_{JD}.
\end{equation}
according to the decomposition of $TM$. Note that $A|_{D}$ is a bundle endomorphism of $D$ which restricts to each leaf $\mathcal{L} \in \mathcal{F}$ to become an endomorphism $A_{\mathcal{L}}$ of the tangent bundle of $\mathcal{L}$. Now it follows immediately from equation \eqref{eq:main} and the fact that $\mathcal{L}$ is totally geodesic that on each leaf $\mathcal{L} \in \mathcal{F}$ we have
\begin{equation}\label{eq:mainreal}
{}^{g_{\mathcal{L}}}\nabla_X{A_{\mathcal{L}}}=\frac{1}{2}\left(X^{\flat}\otimes \grad_{g_{\mathcal{L}}}\left(\tr A_{\mathcal{L}}\right)+\left(d \tr A_{\mathcal{L}}\right)^{\flat}\otimes X\right)
\end{equation}
for all $X \in \Gamma(T\mathcal{L})$ where $\flat$ is taken with respect to $g_{\mathcal{L}}$. Equation \eqref{eq:mainreal} is the real version of equation \eqref{eq:main}, i.e.~every solution $A_{\mathcal{L}}$ gives rise to a pseudo-Riemannian metric $\hat{g}_{\mathcal{L}}$ on $\mathcal{L}$ which has the same unparametrised geodesics as $g_{\mathcal{L}}$ (see for instance \cite[Theorem 2]{benenti}). In our case the eigenvalues of $A_{\mathcal{L}}$ are $\rho,\sigma$ (pulled back to $\mathcal{L}$), in particular $A_{\mathcal{L}}$ is non-parallel with respect to ${}^{g_{\mathcal{L}}}\nabla$. It follows that we can take advantage of the local classification of (real) projectively equivalent surface  {  metrics (see  the appendix of \cite{MR2892455} and the references therein for details).} 

Let us use the fact that the vector fields $V_1,V_2,K_1,K_2$ all Lie commute to introduce local coordinates $(\tilde{x},\tilde{y},s,t)$ in a neighborhood of every point $p \in M^{\circ}$ such that
$$
V_1=\partial_{\tilde{x}},\quad V_2=\partial_{\tilde{y}},\quad K_1=\partial_s,\quad K_2=\partial_t.
$$
Note that the matrix representation of the metric $g$ and the solution $A$ with respect to the coordinates $\tilde{x},\tilde{y},s,t$ decomposes into blocks
\begin{align}
g=\left(\begin{array}{cc}g^{\prime}&0\\0&g^{\prime}\end{array}\right),\quad A=\left(\begin{array}{cc}A^{\prime}&0\\0&A^{\prime}\end{array}\right)\label{eq:blockform}
\end{align}
where the matrix-valued functions $g^{\prime}, A^{\prime}$ do not depend on the coordinates $s,t$ and the eigenvalues of $A^{\prime}$ are $\rho,\sigma$.

We distinguish two cases  (see \eqref{eq:propA2}, the third case of a Jordan block has been already excluded):

\subsubsection{ Assume $\rho,\sigma$ are real}

{
Then, (see the appendix of \cite{MR2892455}),} there is a coordinate transformation $x=x(\tilde{x},\tilde{y}),y=y(\tilde{x},\tilde{y})$ such that with respect to the coordinates $x,y$, the pair $(g^{\prime},A^{\prime})$ becomes
\begin{align}
\begin{array}{cc}g_{\mbox{\tiny L}}=(\rho(x)-\sigma(y))\left(\begin{array}{cc}1&0\\0&\epsilon\end{array}\right),&A_{\mbox{\tiny L}}=\left(\begin{array}{cc}\rho(x)&0\\0&\sigma(y)\end{array}\right),
\end{array}\label{eq:Dini}
\end{align}
where $\epsilon=\pm 1$, depending on whether $g|_{D}$ has Riemannian or Lorentzian signature, and the non-constant eigenvalues $\rho,\sigma$ only depend on $x,y$ respectively.  {As in the appendix of \cite{MR2892455}, we will call   the coordinates $x,y$ ``Liouville coordinates''.}

From \eqref{eq:killingexplicit} and \eqref{eq:Dini}, we can compute $V_1,V_2$ in the Liouville coordinates $x,y$ and obtain
\begin{align}
V_1=\partial_{\tilde{x}}=\frac{1}{\rho-\sigma}\left(\rho'\partial_x+\epsilon\sigma'\partial_y\right),\quad V_2=\partial_{\tilde{y}}=\frac{1}{\rho-\sigma}\left(\rho' \sigma \partial_x+\epsilon\rho\sigma'\partial_y\right).\label{eq:v1v2}
\end{align}

Consequently, the differential $\frac{\partial (x,y)}{\partial(\tilde{x},\tilde{y})}$ of the coordinate transformation expressing $\tilde{x},\tilde{y}$ in terms of $x,y$ is given by
$$\frac{\partial (x,y)}{\partial(\tilde{x},\tilde{y})}(\tilde{x}(x,y),\tilde{y}(x,y))=\left(\begin{array}{cc}\frac{\partial x}{\partial\tilde{x}}&\frac{\partial x}{\partial\tilde{y}}\\\frac{\partial y}{\partial\tilde{x}}&\frac{\partial y}{\partial\tilde{y}}\end{array}\right)=\frac{1}{\rho-\sigma}\left(\begin{array}{cc}\rho'&\rho' \sigma\\\epsilon\sigma'&\epsilon\rho\sigma'\end{array}\right).$$

From this, we can calculate the matrix representations $g^{\prime},A^{\prime}$ of $g_{|D},A_{|D}$ in the coordinates $\tilde{x},\tilde{y}$ by applying the usual transformation rules. We obtain
$$g^{\prime}=\left(\frac{\partial (x,y)}{\partial(\tilde{x},\tilde{y})}\right)^T g_{\mbox{\tiny L}}\frac{\partial (x,y)}{\partial(\tilde{x},\tilde{y})}=\frac{1}{\rho-\sigma}\left(\begin{array}{cc}(\rho')^2+\epsilon(\sigma')^2&(\rho')^2\sigma+\epsilon\rho (\sigma')^2\\(\rho')^2\sigma+\epsilon\rho (\sigma')^2&(\rho')^2\sigma^2+\epsilon\rho^2 (\sigma')^2\end{array}\right),$$
$$A^{\prime}=\left(\frac{\partial (x,y)}{\partial(\tilde{x},\tilde{y})}\right)^{-1}A_{\mbox{\tiny L}}\frac{\partial (x,y)}{\partial(\tilde{x},\tilde{y})}=\left(\begin{array}{cc}\rho+\sigma&\rho\sigma\\-1&0\end{array}\right).$$

Using these equations together with \eqref{eq:Dini} and \eqref{eq:v1v2}, we see that in the coordinates $x,y,s,t$ the K\"ahler structure $(g,J)$ takes the form \eqref{eq:normalform1} and $A$ is given by \eqref{eq:normalformA1}.

\subsubsection{$\sigma$ and $\rho$ are complex conjugates}

Writing $\mathcal{R}=\frac{1}{2}\left(\rho+\sigma\right)$ and $\mathcal{I}=\frac{1}{2i}\left(\rho-\sigma\right)$ we have
\begin{align}
K_1=2J\grad(\mathcal{R})\quad \mbox{and} \quad K_2=J\grad(\mathcal{R}^2+\mathcal{I}^2).\label{eq:killingexplicitcc}
\end{align}
In this case, we can introduce so-called complex Liouville coordinates $x=x(\tilde{x},\tilde{y}),y=y(\tilde{x},\tilde{y})$ (see ~\cite{MR2892455}) in which the corresponding matrices of $g^{\prime}$ and $A^{\prime}$ take the form
\begin{align}
\begin{array}{cc}
g_{\mbox{\tiny CL}}=\left(\begin{array}{cc}0&\mathcal{I}\\\mathcal{I}&0\end{array}\right),&A_{\mbox{\tiny CL}}=\left(\begin{array}{cc}\mathcal{R}&-\mathcal{I}\\\mathcal{I}&\mathcal{R}\end{array}\right),
\end{array}\label{eq:cplxLiouville}
\end{align}
where $\rho(z)=\mathcal{R}(z)+i\mathcal{I}(z)$ is a holomorphic function of the complex variable $z=x+iy$.

Writing $\mathcal{R}_x=\frac{\partial\mathcal{R}}{\partial x}$ etc., from \eqref{eq:killingexplicitcc} and \eqref{eq:cplxLiouville} we obtain
\begin{align}
\frac{\partial}{\partial \tilde{x}}=\frac{2}{\mathcal{I}}\left(\mathcal{R}_y \frac{\partial}{\partial x}+\mathcal{R}_x\frac{\partial}{\partial y}\right),\,\,\, \frac{\partial}{\partial \tilde{y}}=\frac{2}{\mathcal{I}}\left((\mathcal{R}\mathcal{R}_y+\mathcal{I}\mathcal{I}_y)\frac{\partial}{\partial x}+(\mathcal{R}\mathcal{R}_x+\mathcal{I}\mathcal{I}_x)\frac{\partial}{\partial y}\right).\label{eq:v1v2cc}
\end{align}

Consequently, the differential $\frac{\partial (x,y)}{\partial(\tilde{x},\tilde{y})}$ of the coordinate transformation expressing $\tilde{x},\tilde{y}$ in terms of $x,y$ is given by
$$\frac{\partial (x,y)}{\partial(\tilde{x},\tilde{y})}(\tilde{x}(x,y),\tilde{y}(x,y))=\left(\begin{array}{cc}\frac{\partial x}{\partial\tilde{x}}&\frac{\partial x}{\partial\tilde{y}}\\\frac{\partial y}{\partial\tilde{x}}&\frac{\partial y}{\partial\tilde{y}}\end{array}\right)=\frac{2}{\mathcal{I}}\left(\begin{array}{cc}\mathcal{R}_y&\mathcal{R}\mathcal{R}_y+\mathcal{I}\mathcal{I}_y\\\mathcal{R}_x &\mathcal{R}\mathcal{R}_x+\mathcal{I}\mathcal{I}_x\end{array}\right).$$

From this, we can calculate the matrix representations $g^{\prime},A^{\prime}$ of $g_{|D},A_{|D}$ in the coordinates $\tilde{x},\tilde{y}$ by applying the usual transformation rules. Using the Cauchy-Riemann equations $\mathcal{R}_x=\mathcal{I}_y,\mathcal{R}_y=-\mathcal{I}_x$ we obtain
$$
\aligned
g^{\prime}&=\frac{\partial (x,y)}{\partial(\tilde{x},\tilde{y})}^T g_{\mbox{\tiny CL}}\frac{\partial (x,y)}{\partial(\tilde{x},\tilde{y})}\\&=\frac{4}{\mathcal{I}}\left(\begin{array}{cc}2\mathcal{R}_x\mathcal{R}_y&\mathcal{I}(\mathcal{R}_x\mathcal{I}_y+\mathcal{R}_y\mathcal{I}_x)+2\mathcal{R}\mathcal{R}_x\mathcal{R}_y\\\mathcal{I}(\mathcal{R}_x\mathcal{I}_y+\mathcal{R}_y\mathcal{I}_x)+2\mathcal{R}\mathcal{R}_x\mathcal{R}_y&2(\mathcal{R}\mathcal{R}_x+\mathcal{I}\mathcal{I}_x)(\mathcal{R}\mathcal{R}_y+\mathcal{I}\mathcal{I}_y)\end{array}\right),
\endaligned
$$
$$A^{\prime}=\frac{\partial (x,y)}{\partial(\tilde{x},\tilde{y})}^{-1}A_{\mbox{\tiny CL}}\frac{\partial (x,y)}{\partial(\tilde{x},\tilde{y})}=\left(\begin{array}{cc}2\mathcal{R}&\mathcal{R}^2+\mathcal{I}^2\\-1&0\end{array}\right).$$

Using these equations together with \eqref{eq:cplxLiouville} and \eqref{eq:v1v2cc}, the K\"ahler structure $(g,J)$ in the coordinates $z=x+iy,s,t$ takes the form \eqref{eq:normalform2} and $A$ is given by \eqref{eq:normalformA2}.

\subsection{The case when $A$ has order one}

Let us now present the normal forms of a $4$-dimensional  K\"ahler structure $(g,J)$ admitting $A\in\mathcal{A}(g,J)$, having one non-constant eigenvalue $\rho$ and a constant eigenvalue $c$. Here the procedure of extending the projective setting to the c-projective one, as it was applied in the last two subsections, does not work. Indeed, in the case that $A$ has order one, the distribution $D$ is spanned by a single vector field $V_1=\grad_g\,\rho$ and the corresponding one-dimensional block does not determine the whole of the K\"ahler structure as it was the case for Liouville coordinates \eqref{eq:normalform1}, \eqref{eq:normalformA1} and complex Liouville coordinates \eqref{eq:normalform2}, \eqref{eq:normalformA2}. We will therefore use the results of Apostolov et al \cite{MR2228318} which we shall describe briefly in what follows. Note that these results have been obtained for Riemannian signature only. However, in this special case the proof does not need any modifications to work in the pseudo-Riemannian  setting as well.

As before, let $M^{\circ}$ be the open and dense subset of the points of $M$ where  $d\rho\neq 0$ and $\rho \ne c$. Then, according to ~\cite{MR2228318}, we can introduce coordinates $\rho,t,u_1,u_2$, in a neighborhood of every point of $M^{\circ}$ such that $K_1=\partial_t$ and the K\"ahler structure $(g,J)$ takes the form
\begin{align}
\begin{array}{c}
g=(c-\rho) h+\frac{\rho-c}{f(\rho)}d\rho^2+\frac{f(\rho)}{\rho-c}\theta^2,\quad d \rho\circ J=-\frac{f(\rho)}{\rho-c}\theta, \quad \theta\circ J=\frac{\rho-c}{f(\rho)}d \rho,
\end{array}
\label{eq:normalform3a}
\end{align}
where $f$ is a function of one variable, $(h,j)$ is a positively or negatively definite K\"ahler structure on the domain $\Sigma\subseteq \mathbb{R}^2$ of the coordinate functions $u_1,u_2$ and $\theta=dt-\tau$ for a $1$-form $\tau$ on $\Sigma$ satisfying $d\tau=h(j.,.)$.

Moreover, $A$ in these coordinates takes the form
\begin{align}
gA=c(c-\rho)h+\rho\frac{f(\rho)}{\rho-c}\theta^2+\rho\frac{\rho-c}{f(\rho)}d\rho^2.\label{eq:normalformA3a}
\end{align}

To keep consistent with the notation of the previous sections, we introduce a new coordinate $x=x(\rho)$ by requiring $dx=\frac{1}{\sqrt{f}}d\rho$. In the coordinates $x,t,u_1,u_2$, the K\"ahler structure $(g,J)$ takes the form \eqref{eq:normalform3} and $A$ is given by \eqref{eq:normalformA3}.

%%%%%%%%%%%%%%%%%%%%%%%%%%%%%%%%%%%%%%%%%%

\section{Proof of Theorem \ref{thm:normalformwithv}}\label{sec:solvinglie}

%%%%%%%%%%%%%%%%%%%%%%%%%%%%%%%%%%%%%%%%%%

Recall that given an essential c-projective vector field $v$ for $(g,J)$, we always have a non-trivial solution $A\in \mathcal{A}(g,J)$, hence, the degree of mobility $d(g,J)$ is at least two. On the other hand, we have

\begin{prop}\label{prop:degree2indim2}
Let $(M,g,J)$ be a connected K\"ahler surface (of arbitrary signature)  of non-constant holomorphic sectional curvature. Then, $d(g,J)\leq 2$.
\end{prop}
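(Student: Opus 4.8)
The plan is to prove the contrapositive: assume $d(g,J)\geq 3$ and deduce that $(M,g,J)$ has constant holomorphic sectional curvature. The first step is a reduction to Theorem \ref{thm:normalform}. The parallel elements of $\mathcal{A}(g,J)$ form a linear subspace, and by the de~Rham decomposition theorem --- noting that a parallel, $g$-symmetric, $J$-commuting endomorphism restricts to a multiple of $\Id$ on each non-flat irreducible local factor, and likewise on a flat factor of complex dimension one, since a $J$-commuting symmetric endomorphism of a $2$-plane carrying a compatible complex structure is scalar --- this subspace has dimension at most two unless $(M,g)$ is flat (in which case it has constant holomorphic curvature and we are done). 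Hence $d(g,J)\geq 3$ forces the existence of a non-parallel $A\in\mathcal{A}(g,J)$, so Theorem \ref{thm:normalform} applies: in a neighbourhood $U$ of almost every point, $(g,\omega,A)$ is given by one of the normal forms \eqref{eq:normalform1}--\eqref{eq:normalformA3}. Since \eqref{eq:main} is of finite type, restriction to $U$ is injective on $\mathcal{A}(g,J)$; it therefore suffices to bound the dimension of the space of local solutions in each normal form.

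The core of the argument is this explicit computation. Fix one of the normal forms and let $B\in\mathcal{A}(g,J)$ be an arbitrary solution. The Hamiltonian Killing fields $K_1,K_2$ of Corollary \ref{cor:commutekilling} are holomorphic isometries whose flows, in the chosen coordinates, are translations in $s$ and $t$; they preserve $\mathcal{A}(g,J)$, and one shows --- as in the projective analogue treated in \cite{MR2892455} --- that $B$ commutes with $A$ and is independent of $s,t$, so that, like $g$ and $A$, it has the block form \eqref{eq:blockform}. With this, equation \eqref{eq:main} collapses to a linear system of ordinary differential equations: in the variable $x$ in all three cases, together with (coupled) equations in $y$ in the Liouville case, or equations involving the two-dimensional K\"ahler factor $(h,j)$ in the degenerate case. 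Integrating this system, one finds $\mathcal{A}(g,J)=\mathrm{span}\{\Id,A\}$, that is $d(g,J)=2$, except when the profile functions $\rho$ (and $\sigma$, respectively the factor $(h,j)$) defining the normal form satisfy one distinguished second order equation (respectively overdetermined system).

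It remains to dispose of these exceptional branches. In each of them one substitutes the solution of the distinguished equation back into the normal form and computes the holomorphic sectional curvature of $g$ directly --- conveniently along the eigendirections of $A$ and along the leaves of the distributions $D$ and $JD$ of Corollary \ref{cor:commutekilling} --- and finds that it is constant, contradicting the hypothesis. This proves the contrapositive and hence the Proposition. The main obstacle is exactly this middle part: carrying out the integration of the prolonged linear ODE system in each normal form and, above all, recognising the finitely many exceptional solution families as precisely the constant-holomorphic-curvature metrics; the reductions before and after it are soft. Alternatively, the entire computation can be bypassed by quoting \cite[Lemma~6]{MR2948791} (or, in definite signature, the Hamiltonian $2$-form classification of \cite{MR2144249}), which states that degree of mobility at least three implies constant holomorphic sectional curvature; the Proposition is then its contrapositive.
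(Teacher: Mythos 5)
The paper does not prove this proposition at all: its entire ``proof'' is the citation to \cite{MR2144249} (Riemannian case) and \cite{MR2948791} (arbitrary signature). Your closing sentence --- quoting \cite[Lemma~6]{MR2948791} and taking the contrapositive --- therefore coincides exactly with what the paper does, and as a proof of the Proposition it is complete and correct.

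Your primary route, the direct computation via the normal forms of Theorem \ref{thm:normalform}, is a genuinely different strategy, but as sketched it has concrete gaps. First, the claim that an arbitrary $B\in\mathcal{A}(g,J)$ commutes with $A$ and is independent of $s,t$ does not follow merely from $K_1,K_2$ being holomorphic isometries: these only give $\mathcal{L}_{K_i}B\in\mathcal{A}(g,J)$, i.e.\ $B$ evolves linearly under the flows rather than being fixed by them, and two solutions of \eqref{eq:main} need not commute a priori; this is precisely the kind of statement one would have to prove, and it is where the substance of \cite{MR2948791} lies. Second, your reduction of the ``all solutions parallel'' case via de~Rham decomposition is not sound in indefinite signature: the relevant splitting theorem (Wu's theorem) needs non-degenerate parallel distributions, and a parallel $g$-symmetric $J$-commuting endomorphism can be non-diagonalisable with degenerate kernel (a parallel nilpotent summand), so the commutant of the holonomy representation on an irreducible factor need not reduce to scalars and the bound ``at most two unless flat'' does not follow from the argument given. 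Third, the core of the route --- integrating the prolonged ODE system in each normal form and identifying the exceptional branches as constant-holomorphic-curvature metrics --- is explicitly left unexecuted, and it is exactly the hard part. So the direct route is a plausible programme but not a proof; the citation route is the proof, and it is the one the paper uses.
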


This statement was proven in ~\cite{MR2144249} for Riemannian and in~\cite{MR2948791} for arbitrary signature.

\begin{remark}
Proposition \ref{prop:degree2indim2} holds true in higher dimensions as well if $M$ is assumed to be closed~\cite{MR2948791}, but fails to be true without the closedness assumption \cite{CONE}.
\end{remark}

Thus, since we are working in the situation of K\"ahler surfaces, we can restrict to the case $d(g,J)=2$ in what follows.

%%%%%%%%%%%%%%%%%%%%%%%%%%%%%%%%%%%%%%%%%%

\subsection{K\"ahler metrics of degree of mobility two admitting c-projective vector fields}

%%%%%%%%%%%%%%%%%%%%%%%%%%%%%%%%%%%%%%%%%%

\subsubsection{{ Equations on $g, A$ and $v$.}}  

Suppose that $(M,g,J)$ is a K\"ahler surface with $d(g,J)=2$ and  $h,\hat{h}\in \mathcal{S}([\gnabla],J)$ is a basis of the space of solutions to \eqref{eq:maininvariant}. Let $v$ be a c-projective vector field. Since the Lie derivative $\mathcal{L}_v$ gives an endomorphism
$$
\mathcal{L}_v:\mathcal{S}([\gnabla],J)\rightarrow \mathcal{S}([\gnabla],J)
$$
we can write
\begin{align}
\begin{array}{c}
\mathcal{L}_v h =\gamma h +\delta \hat{h},\quad \mathcal{L}_v \hat{h}=\alpha h+\beta \hat{h},
\end{array}\label{eq:matrixv}
\end{align}
for certain constants $\alpha,\beta,\gamma,\delta$.

In the case when one of the basis vectors, say $h$, is non-degenerate on some open subset $U\subseteq M$ we can think of it as arising from a metric $g$ on $U$.  Then, as explained in \S\ref{sec:compks},   instead of $h$ and $\hat{h}$  we  can equivalently work with the metric $g$ and an endomorphism $A\in \mathcal{A}(g,J)$ being a solution of  \eqref{eq:main}. In particular, \eqref{eq:matrixv} is equivalent to a \textsc{pde} system of first order on $g$, $A$ and  $v$ as the next lemma shows.

\begin{lem}\label{key}
Let $(M,g,J)$ be a K\"ahler manifold such that the degree of mobility $d(g,J)$ is two and let $v$ be a c-projective vector field. Then, for every $A\in\mathcal{A}(g,J)$, $A\neq \mathrm{const}\cdot \Id$, there exist constants $\alpha,\beta,\gamma,\delta$ such that
\begin{align}
\mathcal{L}_v g=-\delta gA-\left(\tfrac{1}{2}\delta\tr A+(n+1)\gamma\right)g\label{eq:PDEvgJ}
\end{align}
and
\begin{align}
\mathcal{L}_v A=-\delta A^2+(\beta-\gamma) A+\alpha\, \mathrm{Id}.\label{eq:LieA}
\end{align}
\end{lem}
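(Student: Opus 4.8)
The plan is to derive \eqref{eq:PDEvgJ} and \eqref{eq:LieA} from the abstract identities \eqref{eq:matrixv} by translating the statements about solutions $h,\hat h$ of the c-projectively invariant system \eqref{eq:maininvariant} into statements about $g$ and $A$ via the isomorphisms recalled in \S\ref{sec:compks}. Recall that $h_g$ defined in \eqref{eq:metricsolution} is a solution of \eqref{eq:maininvariant}, so we may take $h=h_g$ as the first element of our basis; write $\hat h$ for the second, and recall from \eqref{eq:solspaceiso} that $A:=\varphi_g(\hat h)=\hat h h_g^{-1}\in\mathcal A(g,J)$ is the corresponding solution of \eqref{eq:main}. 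Since $h$ and $\hat h$ are a basis and $A\neq\mathrm{const}\cdot\Id$, this $A$ together with $\Id=\varphi_g(h)$ spans $\mathcal A(g,J)$, and conversely every $A\in\mathcal A(g,J)$ not proportional to $\Id$ arises this way. So it suffices to work with this specific basis.

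First I would unravel the Lie derivative of $h_g$ itself. Since $h_g=g^{-1}\otimes(\det g)^{1/(2n+2)}$ is a weighted contravariant tensor built only from $g$, a direct computation gives $\mathcal L_v h_g$ as an explicit expression in $\mathcal L_v g$ and $\tr_g(\mathcal L_v g)$ — schematically $\mathcal L_v h_g = -\bigl(g^{-1}(\mathcal L_v g)g^{-1}\bigr)\otimes(\det g)^{1/(2n+2)} + \tfrac{1}{2n+2}\tr_g(\mathcal L_v g)\,h_g$. The first equation of \eqref{eq:matrixv}, $\mathcal L_v h_g=\gamma h_g+\delta\hat h$, then reads, after multiplying through by $h_g^{-1}$ on the right (i.e.\ applying $\varphi_g$-type bookkeeping), as an equation involving $g^{-1}(\mathcal L_v g)$, the trace $\tr_g(\mathcal L_v g)$, $\Id$ and $A$. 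Solving this linear relation for $g^{-1}\mathcal L_v g$ — taking the trace once to pin down $\tr_g(\mathcal L_v g)$ in terms of $\gamma,\delta,\tr A$ — yields \eqref{eq:PDEvgJ}. One must be slightly careful with the numerical coefficients (the weight $1/(n+1)$, the factor $2(n+1)$ in $\det$), but this is bookkeeping, not a real obstacle.

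Next I would treat $A$. Because $\mathcal L_v$ is a Leibniz derivation, $\mathcal L_v A=\mathcal L_v(\hat h h_g^{-1})=(\mathcal L_v\hat h)h_g^{-1}+\hat h\,\mathcal L_v(h_g^{-1})$, and $\mathcal L_v(h_g^{-1})=-h_g^{-1}(\mathcal L_v h_g)h_g^{-1}$. Substituting the second equation of \eqref{eq:matrixv}, $\mathcal L_v\hat h=\alpha h_g+\beta\hat h$, together with the already-obtained formula for $\mathcal L_v h_g=\gamma h_g+\delta\hat h$, and using $\hat h h_g^{-1}=A$ repeatedly, collapses everything to $\mathcal L_v A=\alpha\,\Id+\beta A-\gamma A-\delta A^2$, which is \eqref{eq:LieA}. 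This step is purely algebraic once the first part is in place. Alternatively — and this might be cleaner as the actual write-up — one can avoid $h_g$ altogether: start from the known fact (see \S\ref{sec:compks}, and the discussion around \eqref{hprojvf}) that $A_v=g^{-1}\mathcal L_v g-\tfrac{1}{2(n+1)}\tr(g^{-1}\mathcal L_v g)\Id\in\mathcal A(g,J)$, and that $\mathcal L_v A\in\mathcal A(g,J)$ by c-projective invariance of \eqref{eq:main}; then expand both in the basis $\{\Id,A\}$ and match coefficients with \eqref{eq:matrixv}. The only genuinely delicate point — and the place I would be most careful — is verifying that the constants $\alpha,\beta,\gamma,\delta$ appearing in \eqref{eq:PDEvgJ}–\eqref{eq:LieA} are literally the same constants as in the abstract relation \eqref{eq:matrixv} (i.e.\ that the isomorphism $\varphi_g$ intertwines $\mathcal L_v$ correctly and no hidden rescaling creeps in); this is where the weight conventions must be handled with precision, and it is the step I expect to consume most of the proof.
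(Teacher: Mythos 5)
Your proposal follows essentially the same route as the paper's proof: the paper likewise sets $h=h_g$, $\hat h = A h_g$, obtains \eqref{eq:LieA} by differentiating $A=\hat h h^{-1}$ with the Leibniz rule, and derives \eqref{eq:PDEvgJ} by writing $\mathcal L_v h_g$ explicitly in terms of $g$ (arriving at $g^{-1}\mathcal L_v g-\tfrac{1}{2(n+1)}\tr(g^{-1}\mathcal L_v g)\Id=-\delta A-\gamma\Id$) and then taking a trace to eliminate $\tr(g^{-1}\mathcal L_v g)$. The bookkeeping point you flag is handled exactly as you describe, and the constants are indeed those of \eqref{eq:matrixv}.
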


\begin{proof} Let $A\in\mathcal{A}(g,J)$, $A\neq \mbox{const}\cdot \Id$. As we explained above, the spaces $\mathcal{A}(g,J)$ and $\mathcal{S}([\gnabla],J)$ are isomorphic via the map $\varphi_g$ defined by \eqref{eq:solspaceiso}. Thus, we can define basis vectors $h, \hat{h} \in \mathcal{S}([\gnabla],J)$ from the relations
 $\Id = \varphi_g (h)$ and $A=\varphi_g(\hat{h})$ so that
 \begin{equation}\label{eq:defA0}
h = h_g = g^{-1}\otimes (\det g)^{1/(2n+2)}  \quad \mbox{and} \quad
A = \hat h h^{-1}.
\end{equation}

The Lie derivatives of $h$ and $\hat {h}$ along $v$ can be written as in \eqref{eq:matrixv} for certain constants $\alpha,\beta,\gamma,\delta$. As $A$ and $g$ are related to $h$ and $\hat h$ by means of explicit formulas \eqref{eq:defA0}, we can easily find their Lie derivatives too.

Taking the Lie derivative of $A=  \hat h h^{-1}$ immediately yields equation \eqref{eq:LieA}.

Furthermore, writing out the Lie derivative of $h=h_g$ explicitly in terms of $g$ by using \eqref{eq:defA0}, we obtain that the first equation in \eqref{eq:matrixv} is equivalent to
\begin{equation}\label{someid}
 g^{-1}\mathcal{L}_v g-\frac{\tr( g^{-1}\mathcal{L}_v g)}{2(n+1)}\Id=-\delta A-\gamma \Id.
\end{equation}

Taking the trace gives
$$
\frac{1}{n+1}\tr(g^{-1}\mathcal{L}_v g)=-\delta \tr(A)-2n\gamma
$$ and inserting this back into \eqref{someid} yields \eqref{eq:PDEvgJ}.
\end{proof}

We can now insert the normal forms for $(g,J)$ and $A$, as they have been obtained in Theorem \ref{thm:normalform}, into \eqref{eq:PDEvgJ},\eqref{eq:LieA} and obtain a \textsc{pde} system on the unspecified data appearing in the normal forms for $g,A$ (for example, the functions $\rho,\sigma$ in the Liouville case) and on the components of the c-projective vector field. This reduces our problem to solving a system of \textsc{ode}s.

The integration of these \textsc{ode}s, which depend on the constants $\alpha,\beta,\gamma,\delta$, can be simplified further by choosing a special basis $h,\hat{h}$ of $\mathcal{S}([\gnabla],J)$ in which the constants take a special form.

\subsubsection{Normal forms for $\mathcal{L}_v$}

Suppose that $v$ is essential for at least one metric in the c-projective class of $g$. Then,  there are, up to rescaling of $v$, four possible normal forms for the matrix representation
\begin{align}
\left(\begin{array}{cc}
\gamma&\alpha\\\delta&\beta
\end{array}\right)\label{eq:matrixformv}
\end{align}
of the endomorphism $\mathcal{L}_v:\mathcal{S}([\gnabla],J)\rightarrow \mathcal{S}([\gnabla],J)$ in a basis $h,\hat{h}$ of $\mathcal{S}([\gnabla],J)$ as in \eqref{eq:matrixv}:
\begin{align}
\left(\begin{array}{cc}
0&1\\0&0
\end{array}\right),\left(\begin{array}{cc}
1&0\\0&\beta
\end{array}\right)\mbox{ for certain }\beta\neq 1,\left(\begin{array}{cc}
1&1\\0&1
\end{array}\right),\left(\begin{array}{cc}
\beta&-1\\1&\beta
\end{array}\right)\mbox{ for certain }\beta.
\label{eq:normalformsv}
\end{align}

To use these normal forms in Lemma \ref{key} and in the \textsc{pde}s \eqref{eq:PDEvgJ},\eqref{eq:LieA}, we must show that for each choice of basis $h,\hat{h}$, at least one of these vectors is non-degenerate and can therefore be viewed as arising from a metric.

Recall that the order of $A\in \mathcal{A}(g,J)$ is the number of non-constant eigenvalues of $A$. In the case $d(g,J)=2$, this is obviously an invariant of the c-projective class $[g]$: if $A$ has order $l$, then every $A'\in \mathcal{A}(g',J)$, $A'\neq \mbox{const}\cdot \Id$ has order $l$ for every $g'\in [g]$ (see also Lemma \ref{lem:aboutL} below).

\begin{lem}\label{lem:nondeg}
Let $(M,g,J)$ be a connected K\"ahler manifold. If $h\in\mathcal{S}([\gnabla],J)$ is non-degenerate at a point $p\in M$, it is non-degenerate on a dense open subset of $M$.
\end{lem}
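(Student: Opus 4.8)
The plan is to exploit the real-analyticity built into the problem. A solution $h$ of the c-projectively invariant equation \eqref{eq:maininvariant} satisfies an overdetermined linear system of finite type; prolonging it, one sees that (in a suitable real-analytic structure for which $\nabla$ and $J$ are real-analytic — which always exists, by a theorem on the existence of such structures compatible with an analytic connection, or alternatively one works directly with the prolonged connection on the associated tractor bundle) the components of $h$ are real-analytic functions of the coordinates. Hence the function $p\mapsto \det h(p)$, computed in any local trivialization, is real-analytic. First I would make this precise: fix $p_0\in M$ where $h(p_0)$ is nondegenerate, choose a real-analytic coordinate chart around $p_0$, and observe that $\det h$ is a real-analytic function on that chart which is nonzero at $p_0$.

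Next I would pass from the local picture to a global one using connectedness. The zero set of a real-analytic function that is not identically zero is nowhere dense; since $\det h$ is not identically zero on the chart around $p_0$ (it is nonzero at $p_0$), its zero set there has empty interior. The remaining point is to propagate this from one chart to all of $M$: if $h$ were degenerate on some open set $U\subseteq M$, then covering $M$ by real-analytic charts and using that $M$ is connected, one can connect $U$ to $p_0$ by a chain of overlapping charts; on each chart $\det h$ is real-analytic, and vanishing on an open subset of a connected chart forces vanishing on the whole chart (identity theorem for real-analytic functions), hence on every overlapping chart, and so on — contradicting $\det h(p_0)\ne 0$. Therefore the set $\{p\in M : \det h(p)\ne 0\}$ is open (obvious, by continuity) and dense.

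An alternative, more self-contained route that avoids invoking a real-analytic structure: work with the prolongation of \eqref{eq:maininvariant} to a closed linear system $\tilde\nabla \Sigma = 0$ on a finite-rank bundle $\mathcal{T}$ (the c-projective tractor bundle), so that $h$ corresponds to a $\tilde\nabla$-parallel section $\Sigma$. Degeneracy of $h$ at a point is a closed condition cut out by a polynomial in the fibre components of $\Sigma$; along any smooth path $\gamma$ the section $\Sigma\circ\gamma$ is determined by its value at one point via parallel transport, so $\det h$ along $\gamma$ is a (fixed) polynomial expression in a vector solving a linear ODE with analytic coefficients, hence real-analytic in the path parameter. If $\det h$ vanished on an open set but not identically, pick $p_0$ with $\det h(p_0)\ne 0$ and $p_1$ in the interior of the zero set, join them by a path, and derive a contradiction with the fact that a real-analytic function of one variable vanishing on an interval vanishes identically.

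The main obstacle is the justification of real-analyticity: equation \eqref{eq:maininvariant} is given with respect to a merely smooth connection, so one must either invoke the existence of a compatible real-analytic structure, or — cleaner — argue via the prolonged flat-type connection and parallel transport as in the alternative above, reducing everything to the elementary identity theorem for real-analytic functions of one variable. Once analyticity is in hand, openness is immediate and density follows from connectedness by the standard chain-of-charts argument, so the write-up is short.
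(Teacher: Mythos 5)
There is a genuine gap: both of your routes rest on a real-analyticity claim that is unjustified and, in the setting of this paper, actually false. The K\"ahler structure is only assumed smooth, and Whitney's theorem that $M$ carries \emph{some} real-analytic structure does not make a given smooth connection $\nabla$ (or $J$, or $h$) analytic in that structure; there is no regularity theorem forcing solutions of \eqref{eq:maininvariant} to be analytic. Your ``cleaner'' alternative has the same flaw at a different spot: the prolonged tractor connection $\tilde\nabla$ is built from $\nabla$ and its curvature, so along a smooth path the parallel-transport equation is a linear \textsc{ode} with merely \emph{smooth} coefficients, and its solutions --- hence $\det h$ along the path --- are smooth, not real-analytic; the identity theorem is then unavailable. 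That this is not a repairable technicality can be seen from the paper's own normal forms: in Theorem \ref{thm:normalform} the data $\rho(x),\sigma(y)$ are arbitrary smooth functions of one variable and the converse direction of that theorem produces a valid pair $(g,J)$, $A\in\mathcal{A}(g,J)$ for any such choice, so solutions $h$ are generically nowhere analytic.

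The correct replacement for analyticity is the finite-type rigidity supplied by the Hamiltonian Killing vector fields, which is how the paper argues. Passing to $A=\varphi_g(h)$, nondegeneracy of $h$ at $p$ means no eigenvalue of $A$ vanishes at $p$; constant eigenvalues are then nonzero everywhere, and by Corollary \ref{cor:killing} the symmetric functions $\mu_i$ of the eigenvalues are Hamiltonians of Killing vector fields. If a nonconstant eigenvalue vanished on a nonempty open set $U$, its differential would vanish there, the corresponding Killing field would vanish on $U$ and hence (a Killing field being determined by its $1$-jet at a point of a connected manifold) identically, forcing that eigenvalue to be constant --- a contradiction (this is Lemma \ref{lem:propM0}). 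Equivalently and most directly: $\sqrt{\det A}=\mu_2$ is the Hamiltonian of the Killing field $K_2$, so $\det A=0$ on an open set would give $K_2\equiv 0$, hence $\mu_2\equiv 0$ on all of $M$, contradicting nondegeneracy at $p$. Your chain-of-charts and openness-by-continuity steps are fine, but they need this unique-continuation input rather than analyticity.
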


\begin{proof}
Let $A\in \mathcal{A}(g,J)$ such that $\varphi_g(h)=A$. Then, $A$ is non-degenerate at $p$, in particular, the constant eigenvalues of $A$ are non-zero. Since the differentials of the nonconstant eigenvalues of $A$ are non-zero on the open and dense subset $M^{\circ}$  (see ~\cite{MR2228318} or Corollary \ref{cor:commutekilling}), the nonconstant eigenvalues cannot be equal to zero on an open subset.
\end{proof}

\begin{lem}\label{lem:order2}
Let $(M,g,J)$ be a connected K\"ahler surface such that $d(g,J)=2$ and assume that the c-projective class of $g$ has order two.
Then every non-zero $h\in \mathcal{S}([\gnabla],J)$ is non-degenerate on an open and dense subset of $M$ and hence, comes from a K\"ahler metric $\hat{g}$ on this subset (such that $h=h_{\hat{g}}$) which is c-projectively equivalent to $g$.
\end{lem}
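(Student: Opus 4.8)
The plan is to show that if some nonzero $h\in\mathcal{S}([\gnabla],J)$ were degenerate on an open set, then the whole $2$-dimensional space $\mathcal{S}([\gnabla],J)$ would consist of degenerate sections there, contradicting the fact that $h_g$ (which corresponds to the original metric $g$) is a nondegenerate element. First I would recall from Lemma \ref{lem:nondeg} that nondegeneracy is an open-dense condition for a single section: it suffices to exhibit one point where $h$ is nondegenerate. So suppose for contradiction that a nonzero $h\in\mathcal{S}([\gnabla],J)$ is degenerate at every point of $M$. Via the isomorphism $\varphi_g$ from \eqref{eq:solspaceiso}, $h$ corresponds to $A=\varphi_g(h)\in\mathcal{A}(g,J)$ with $A$ degenerate everywhere, i.e.\ $0$ is an eigenvalue of $A$ at each point.

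Next I would use the order-two hypothesis together with Lemma \ref{lem:eigenvaluestructureA} and Corollary \ref{cor:commutekilling}. Since the c-projective class has order two, any $A'\in\mathcal{A}(g,J)$ with $A'\neq\mathrm{const}\cdot\Id$ has two non-constant eigenvalues, and the differentials of these eigenvalues are non-vanishing on the open dense set $M^\circ$. In particular, for our $A$ with $A\neq\mathrm{const}\cdot\Id$ (if $A=c\,\Id$ then $h=c\,h_g$ is nondegenerate and we are done immediately), both eigenvalues $\rho,\sigma$ of $A$ are non-constant. But a non-constant eigenvalue cannot vanish identically on an open subset of $M^\circ$, since its differential is non-zero there; more precisely, by Corollary \ref{cor:killing} the functions $\mu_1=\rho+\sigma$, $\mu_2=\rho\sigma$ are Hamiltonians of Killing fields, and if $0$ were a permanent eigenvalue then $\mu_2\equiv 0$, forcing the Killing field $K_{\mu_2}$ to vanish identically, hence $\rho\sigma$ constant, which combined with $\rho,\sigma$ non-constant and continuous forces one of them to be the constant $0$ — contradicting that both are non-constant. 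Thus $A$ is nondegenerate at some point of $M^\circ$, hence (Lemma \ref{lem:nondeg}) on an open dense subset.

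On that open dense subset where $h$ is nondegenerate, it arises from a K\"ahler metric: indeed $h$ is a nondegenerate section of $S^2_J T^*M(\tfrac{1}{n+1})$ solving \eqref{eq:maininvariant}, so by the correspondence recalled after \eqref{eq:metricsolution} (from \cite{MR2998672}) it gives rise to a unique K\"ahler metric $\hat g$ compatible with $([\gnabla],J)$, i.e.\ c-projectively equivalent to $g$, with $h=h_{\hat g}$. Explicitly one can take $\hat g$ determined by $\hat g^{-1}\otimes(\det\hat g)^{1/(2n+2)}=h$, which is solvable for $\hat g$ pointwise wherever $h$ is nondegenerate (the scaling factor $(\det\hat g)^{1/(2n+2)}$ is fixed by taking determinants).

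\textbf{Main obstacle.} The delicate point is ruling out that the constant eigenvalue ``branch'' of $A$ could be $0$: the order-two hypothesis guarantees there is \emph{no} constant eigenvalue among the non-trivial solutions, so the real content is just organizing the implication ``order two $\Rightarrow$ both eigenvalues non-constant $\Rightarrow$ neither vanishes on an open set,'' which rests on the Killing-Hamiltonian machinery of \S\ref{sec:killing} (Corollary \ref{cor:killing}, Lemma \ref{lem:easy}) rather than on any new computation. Everything else is a direct appeal to Lemma \ref{lem:nondeg} and the metric--solution correspondence.
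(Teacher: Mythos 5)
Your proposal is correct and follows essentially the same route as the paper: pass to $A=\varphi_g(h)\in\mathcal{A}(g,J)$, observe that $A$ is either a non-zero multiple of $\Id$ or (by the order-two hypothesis) has two non-constant eigenvalues, and conclude that a non-constant eigenvalue cannot vanish on an open set because its differential is non-zero on the open dense subset $M^\circ$, so Lemma \ref{lem:nondeg} applies. The extra detour through $\mu_2$ and the Killing field $K_{\mu_2}$ is harmless but redundant, since your first, direct argument already closes the case.
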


\begin{proof}
A non-zero $h\in\mathcal{S}([\gnabla],J)$ corresponds to a non-zero $A\in \mathcal{A}(g,J)$ via $A=\varphi_g(h)$. This $A$ is either a non-zero multiple of the identity (in which case it is nondegenerate at every point) or has two non-constant eigenvalues. These eigenvalues cannot vanish on an open subset since their differentials do not vanish on an open and dense subset of $M^\circ\subset M$.
\end{proof}

The statement above is not true when the order is one. However, we have
\begin{lem}\label{lem:order1}
Let $(M,g,J)$ be a connected K\"ahler surface such that $d(g,J)=2$ and assume that the c-projective class of $g$ has order one.

Given a basis $h,\hat{h}\in \mathcal{S}([\gnabla],J)$, at least one of these vectors is non-degenerate on an open dense subset of $M$ and hence, comes from a K\"ahler metric $\hat{g}$ on this subset which is c-projectively equivalent to $g$.

Moreover, suppose in the basis $h,\hat{h}$, the endomorphism $\mathcal{L}_v$ takes one of the normal forms in \eqref{eq:normalformsv}. Then we have the following cases:
\begin{itemize}
\item Let $\left(\begin{array}{cc}
\gamma&\alpha\\\delta&\beta
\end{array}\right)=\left(\begin{array}{cc}
0&1\\0&0\end{array}\right).$ Then, $h$ is degenerate.
\item Let $\left(\begin{array}{cc}
\gamma&\alpha\\\delta&\beta
\end{array}\right)=\left(\begin{array}{cc}
1&0\\0&\beta\end{array}\right)\mbox{ for }\beta\neq 0.$ Then, exactly one of the $h,\hat{h}$ is non-degenerate, the other is degenerate.
\item Let $\left(\begin{array}{cc}
\gamma&\alpha\\\delta&\beta
\end{array}\right)=\left(\begin{array}{cc}
1&1\\0&1\end{array}\right).$ Then, $h$ is degenerate.
\item The case $\left(\begin{array}{cc}
\gamma&\alpha\\\delta&\beta
\end{array}\right)=\left(\begin{array}{cc}
\beta&-1\\1&\beta\end{array}\right)$ cannot occur.
\end{itemize}
\end{lem}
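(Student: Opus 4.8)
The plan is to single out, inside the two-dimensional space $\mathcal{S}=\mathcal{S}([\gnabla],J)$, a distinguished line of ``everywhere degenerate'' solutions, to prove that this line is invariant under $\mathcal{L}_v$, and then to read off the lemma from elementary linear algebra applied to the normal forms in \eqref{eq:normalformsv}.

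First I would describe the degenerate line. Fix $A_1\in\mathcal{A}(g,J)$ linearly independent of $\Id$; by the order-one hypothesis together with the exclusion of a Jordan block noted after Lemma \ref{lem:eigenvaluestructureA}, the endomorphism $A_1$ has one non-constant eigenvalue $\rho$ and one constant eigenvalue $c$. An arbitrary solution has the form $aA_1+b\,\Id$, with eigenvalues $a\rho+b$ (non-constant as soon as $a\neq0$) and $ac+b$ (constant). If $ac+b=0$ then $aA_1+b\,\Id=a(A_1-c\,\Id)$ has the eigenvalue $0$ at every point and so is degenerate everywhere. If $ac+b\neq0$ (or $a=0$, $b\neq0$), then the non-constant eigenvalue $a\rho+b$ cannot vanish on an open set, since otherwise $d\rho$ would vanish there, contradicting that $d\rho\neq0$ on the dense open set $M^\circ$ (Lemma \ref{lem:propM0}); hence $aA_1+b\,\Id$ is non-degenerate on a dense open set, and by Lemma \ref{lem:nondeg} this is all that can happen. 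Consequently the everywhere-degenerate solutions form exactly the line $L=\mathrm{span}\{A_1-c\,\Id\}\subset\mathcal{A}(g,J)$, i.e. a line $\ell=\varphi_g^{-1}(L)\subset\mathcal{S}$; note that lying in $\ell$ is the intrinsic property that $h$ be degenerate at every point of $M$, which does not refer to any metric realisation. This already yields the first assertion: a basis $h,\hat h$ of the two-dimensional space $\mathcal{S}$ cannot be contained in the line $\ell$, so at least one of $h,\hat h$ lies off $\ell$, is therefore non-degenerate on a dense open set, and hence (by \S\ref{sec:compks}) arises from a K\"ahler metric $\hat g$ with $h=h_{\hat g}$ that is c-projectively equivalent to $g$.

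Next I would prove that $\ell$ is $\mathcal{L}_v$-invariant. As $v$ is c-projective, its local flow $\phi_t$ pulls $[\gnabla]$ back into itself and so acts as a linear isomorphism of $\mathcal{S}$; being a diffeomorphism, $\phi_t$ preserves the pointwise --- hence intrinsic --- notion of non-degeneracy, so $\phi_t^*$ maps the everywhere-degenerate solutions to themselves, i.e. $\phi_t^*(\ell)=\ell$. Differentiating the curve $t\mapsto\phi_t^*h_0$ at $t=0$ for $h_0\in\ell$ gives $\mathcal{L}_vh_0\in\ell$, since the derivative of a curve in the linear subspace $\ell$ lies in $\ell$. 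Thus $\ell$ is a \emph{real} $\mathcal{L}_v$-invariant line. Finally I would go through the four normal forms of \eqref{eq:normalformsv}. For $\begin{pmatrix}0&1\\0&0\end{pmatrix}$ and for $\begin{pmatrix}1&1\\0&1\end{pmatrix}$ the operator $\mathcal{L}_v$ has a unique invariant line, namely $\mathrm{span}\{h\}$, so $\ell=\mathrm{span}\{h\}$ and $h$ is degenerate. For $\mathrm{diag}(1,\beta)$ with $\beta\neq0,1$ the two eigenvalues $1,\beta$ are distinct, so the only invariant lines are $\mathrm{span}\{h\}$ and $\mathrm{span}\{\hat h\}$; hence $\ell$ is one of these, so exactly one of $h,\hat h$ lies in $\ell$ and is degenerate while the other is non-degenerate on a dense open set and thus comes from a c-projectively equivalent K\"ahler metric. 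For $\begin{pmatrix}\beta&-1\\1&\beta\end{pmatrix}$ the eigenvalues $\beta\pm\i$ are non-real, so $\mathcal{L}_v$ admits no real invariant line, contradicting the existence of $\ell$; hence this case cannot occur.

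I expect the main obstacle to be the first two steps --- correctly pinning down the degenerate line $\ell$ and proving its $\mathcal{L}_v$-invariance. The dichotomy ``everywhere degenerate versus non-degenerate on a dense open set'' that isolates $\ell$ is exactly what uses the order-one hypothesis (contrast Lemma \ref{lem:order2}, where no nonzero degenerate solutions exist), and the flow argument must be formulated so as not to be confused with the false statement that $\mathcal{L}_v$ preserves pointwise degeneracy of an individual solution.
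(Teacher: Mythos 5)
Your proof is correct, and its second half takes a genuinely different route from the paper's. The paper establishes the same initial dichotomy (an order-one solution is either everywhere degenerate or non-degenerate on a dense open set, via $\hat A=c_1A+c_2\,\Id$), but then handles the four normal forms by assuming one basis vector non-degenerate, realising it as a metric $g'$, and applying \eqref{eq:LieA} to the constant eigenvalue $c$ of the corresponding $A'=\varphi_{g'}(\hat h)$; this yields the quadratic constraint $-\delta c^2+(\beta-\gamma)c+\alpha=0$, and the four cases are settled by inspecting its solvability ($0=1$ in the first and third cases, $c=0$ in the second, no real root in the fourth). You instead isolate the line $\ell\subset\mathcal{S}([\gnabla],J)$ of everywhere-degenerate solutions, prove it is $\mathcal{L}_v$-invariant via the flow, and read off the answer from the invariant-line structure of the $2\times 2$ normal forms. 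The two arguments are secretly equivalent: writing $\ell=\mathrm{span}\{\hat h-ch\}$, invariance of $\ell$ under \eqref{eq:matrixv} is precisely the equation $-\delta c^2+(\beta-\gamma)c+\alpha=0$. What your version buys is that it never invokes \eqref{eq:LieA} or a metric realisation of a basis vector, and it makes transparent why the fourth case is impossible (a real invariant line must exist but the matrix has non-real spectrum); the small price is that you must justify that the locally defined pullbacks $\phi_t^*$ act as linear automorphisms of $\mathcal{S}([\gnabla],J)$ preserving the everywhere-degenerate locus, which follows from connectedness and the finite type of \eqref{eq:maininvariant} --- the same facts the paper already uses to write \eqref{eq:matrixv}. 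Your explicit warning that $\mathcal{L}_v$ preserves the degenerate \emph{line} but not pointwise degeneracy of an individual solution is exactly the right precaution, and your dichotomy correctly locates where the order-one hypothesis enters (in contrast to Lemma \ref{lem:order2}).
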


\begin{proof}
Let $A,\hat{A}\in \mathcal{A}(g,J)$ correspond to $h,\hat{h}$, i.e. $\varphi_g(h)=A,\varphi_g(\hat{h})=\hat{A}$. Since $h,\hat{h}$ form a basis, we have $\hat{A}=c_1 A+c_2\Id$ for certain constants $c_1,c_2$ where $c_2\neq 0$. Clearly, if one of the endomorphisms $A$ or $\hat{A}$ is degenerate, i.e. has a constant eigenvalue equal to zero, the other endomorphism is non-degenerate on a dense and open subset.

Let us now consider the normal forms in \eqref{eq:normalformsv}. If $h$ is non-degenerate, i.e. $h=h_{g'}$ for certain $g'\in [g]$, equation \eqref{eq:LieA} implies that the constant eigenvalue $c$ of $A'=\varphi_{g'}(\hat{h})$ must satisfy the equation
\begin{align}
0=-\delta c^2+(\beta-\gamma)c+\alpha.\label{eq:equationonconstev}
\end{align}
Suppose that in the first case in \eqref{eq:normalformsv}, $h$ is non-degenerate. Then, by \eqref{eq:equationonconstev}, the constant eigenvalue of $A'$ must satisfy $0=1$ which is a contradiction.  

Suppose that in the second case in \eqref{eq:normalformsv}, $h$ is non-degenerate. Then, by \eqref{eq:equationonconstev}, the constant eigenvalue of $A'$ must satisfy $0=(\beta-1)c$ for $\beta\neq 1$, hence, $c=0$. Thus, $A'$ is degenerate and therefore also $\hat{h}$ is degenerate.

Suppose that in the third case in \eqref{eq:normalformsv}, $h$ is non-degenerate. Using \eqref{eq:equationonconstev}, we again obtain a contradiction.

Suppose that in the fourth case in \eqref{eq:normalformsv}, $h$ is non-degenerate. Then, \eqref{eq:equationonconstev} does not have any real solutions. This contradicts to the fact that $A'$ has a constant real eigenvalue.  The same argument works when $\hat{h}$ is assumed to be non-degenerate. 
\end{proof}

Thus, we can simplify the integration of the \textsc{ode}s obtained from  \eqref{eq:PDEvgJ},\eqref{eq:LieA} after inserting the normal forms from Theorem \ref{thm:normalform}, by first choosing an appropriate basis $h,\hat{h}$ of $\mathcal{S}([\gnabla],J)$ in which the matrix of $\mathcal{L}_v$ takes one of the normal forms in \eqref{eq:normalformsv}. Then, by Lemma \ref{lem:order2} and Lemma \ref{lem:order1}, either $h$ or $\hat{h}$ (or both) can be viewed as arising from a metric, therefore, we can solve \eqref{eq:PDEvgJ},\eqref{eq:LieA} with respect to this new metric for the specified constants $\alpha,\beta,\gamma,\delta$.

%%%%%%%%%%%%%%%%%%%%%%%%%%%%%%%%%%%%%%%%%%

\subsection{The case when $A\in\mathcal{A}(g,J)$ has order two}\label{sec:nonconst}

%%%%%%%%%%%%%%%%%%%%%%%%%%%%%%%%%%%%%%%%%%

\subsubsection{Decomposition of the components of the c-projective vector field}

Let $(M, g, J)$ be a K\"ahler surface and let $A\in \mathcal A(g,J)$ be a solution of \eqref{eq:main} {of order two. Let $\rho$ and $\sigma$ be the two (real or conjugate complex) nonconstant eigenvalues of $A$. We are working on the open and dense subset $M^\circ \subset M' \subset M$ where the distribution $D$ spanned by the vector fields $V_1=-JK_1$ and $V_2=-JK_2$ (where $K_1=J\operatorname{grad}_g(\rho+\sigma),K_2=J\operatorname{grad}_g(\sigma\rho)$ are the Killing vector fields from Corollary \ref{cor:commutekilling}) has rank equal to two.} Recall that we have the orthogonal direct sum decomposition (on $M^\circ \subset M$) 
\begin{align}
TM=D\perp  JD.\label{eq:decomptangent}
\end{align}
and the annihilator of $JD$ in $T^* M$ is given by $D^*=\mathrm{span}\{d(\rho+\sigma),d(\rho\sigma)\}$.

Since we want to have the possibility to replace $g$ with any other metric $g'\in [g]$ and, correspondingly, $A$ with  $A'\in \mathcal A(g',J)$, we need to make sure that all the above geometric objects (namely, $M^\circ$, $M'$, $D$, $JD$ and $D^*$) remain unchanged under such an operation, i.e., are projectively invariant.  Since all of them are defined in terms of the eigenvalues of $A$  and the complex structure $J$, it is sufficient to describe the relationship between $A$ and $A'$.

\begin{lem}
\label{lem:aboutL}
Let $g'\in [g]$, $A'\in \mathcal A(g',J)$ and $d(g,J)=2$. Then $A'=(c A + d\, \mathrm{Id})(aA+b \, \mathrm{Id})^{-1}$ for some constants $a,b,c,d\in \R$. In particular,
$$
\rho'=\frac{c \rho + d}{a\rho + b} \quad\mbox{and} \quad \sigma'=\frac{c\sigma  + d}{a\sigma  + b}
$$
where $\rho', \sigma'$ are the eigenvalues of $A'$.
\end{lem}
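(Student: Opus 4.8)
The plan is to exploit the two isomorphisms $\varphi_g:\mathcal{S}([\gnabla],J)\to\mathcal{A}(g,J)$ and $\varphi_{g'}:\mathcal{S}([\gnabla],J)\to\mathcal{A}(g',J)$ described in \eqref{eq:solspaceiso}, which are both defined purely in terms of the common c-projective structure $([\gnabla],J)=([{}^{g'}\nabla],J)$ (the connections agree since $g$ and $g'$ are c-projectively equivalent). Since $d(g,J)=2$, the vector space $\mathcal{S}([\gnabla],J)$ is two-dimensional. The composition $\Psi=\varphi_{g'}\circ\varphi_g^{-1}:\mathcal{A}(g,J)\to\mathcal{A}(g',J)$ is then a linear isomorphism between two-dimensional vector spaces, each of which contains a distinguished element, namely $\mathrm{Id}$ (corresponding under $\varphi_g$ and $\varphi_{g'}$ respectively to $h_g$ and $h_{g'}$). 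So the first step is to identify the two bases: on the source side, $\{\mathrm{Id}, A\}$ is a basis of $\mathcal{A}(g,J)$ (here I use $A\neq\mathrm{const}\cdot\mathrm{Id}$, which holds since $A$ has order two); on the target side, $\{\mathrm{Id}, A'\}$ is a basis of $\mathcal{A}(g',J)$ by the same reasoning.

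The key computational step is to compute $\Psi$ explicitly. Writing $h_g=\varphi_g^{-1}(\mathrm{Id})$ and $h=\varphi_g^{-1}(A)$, so that $h=A\,h_g$ by \eqref{eq:solspaceiso}, one has for any $B\in\mathcal{A}(g,J)$, say $B=\varphi_g(k)$ with $k\in\mathcal{S}([\gnabla],J)$, that $\Psi(B)=\varphi_{g'}(k)=k\,h_{g'}^{-1}$. Now expand $k=\lambda h_g+\mu h=(\lambda\,\mathrm{Id}+\mu A)h_g$ in the basis; then $B=k\,h_g^{-1}=\lambda\,\mathrm{Id}+\mu A$, while $\Psi(B)=k\,h_{g'}^{-1}=(\lambda\,\mathrm{Id}+\mu A)\,h_g h_{g'}^{-1}$. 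Since $h_g h_{g'}^{-1}=\varphi_{g'}(h_g)\in\mathcal{A}(g',J)$ it can itself be written as $h_g h_{g'}^{-1}=a A'+b\,\mathrm{Id}$ for constants $a,b$; likewise $h h_{g'}^{-1}=A h_g h_{g'}^{-1}=\varphi_{g'}(h)\in\mathcal{A}(g',J)$ equals $cA'+d\,\mathrm{Id}$ for constants $c,d$. Hence $\Psi(\mathrm{Id})=aA'+b\,\mathrm{Id}$ and $\Psi(A)=cA'+d\,\mathrm{Id}$. Since $\Psi$ is invertible, $aA'+b\,\mathrm{Id}$ is an invertible endomorphism, i.e.\ $aA+b\,\mathrm{Id}$ — wait, more precisely $\Psi^{-1}$ of an invertible element is invertible, so $aA'+b\,\mathrm{Id}=\Psi(\mathrm{Id})$ is invertible; applying $\Psi^{-1}$ one sees $\mathrm{Id}=\Psi^{-1}(aA'+b\,\mathrm{Id})$, and pulling back the relation $A'=\Psi(A)\,\Psi(\mathrm{Id})^{-1}$... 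The cleanest route: from $h=Ah_g$ and $\varphi_{g'}(h)=h h_{g'}^{-1}$, $\varphi_{g'}(h_g)=h_g h_{g'}^{-1}$ we get $\varphi_{g'}(h)=A\cdot\varphi_{g'}(h_g)$, i.e.\ $cA'+d\,\mathrm{Id}=A(aA'+b\,\mathrm{Id})$. Since all these endomorphisms commute (they are polynomials in $A$ and $A'$, and by the simultaneous normal form these commute on the relevant domain — or more simply, $A'$ is a rational expression in $A$ as both lie in the pencil), we may solve: $A=(cA'+d\,\mathrm{Id})(aA'+b\,\mathrm{Id})^{-1}$, and inverting this Möbius relation gives $A'=(\tilde cA+\tilde d\,\mathrm{Id})(\tilde aA+\tilde b\,\mathrm{Id})^{-1}$ with $\begin{pmatrix}\tilde c&\tilde d\\\tilde a&\tilde b\end{pmatrix}$ the inverse matrix of $\begin{pmatrix}c&d\\a&b\end{pmatrix}$ (which is invertible since $\Psi$ is). Renaming the constants yields the claimed form.

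The final step is the statement about eigenvalues: since $A'=(cA+d\,\mathrm{Id})(aA+b\,\mathrm{Id})^{-1}$ is a rational function of $A$, its eigenvalues are obtained by applying the same rational function to the eigenvalues of $A$, giving $\rho'=\tfrac{c\rho+d}{a\rho+b}$ and $\sigma'=\tfrac{c\sigma+d}{a\sigma+b}$; this is immediate on any common eigenbasis, or from the simultaneous block/normal forms \eqref{eq:propA1}--\eqref{eq:propA2}. The main obstacle I anticipate is the careful bookkeeping showing that all the operators involved genuinely commute so that the Möbius inversion is legitimate as an identity of endomorphisms (not merely of eigenvalues); this is handled by noting $h_g h_{g'}^{-1}\in\mathcal{A}(g',J)$ is simultaneously $g'$-symmetric and $J$-commuting and lies in the pencil spanned by $\mathrm{Id}$ and $A'$, and that $A$ itself, being $\varphi_g$ of a solution, is a polynomial expression in this same pencil — making everything mutually commuting. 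One should also remark that $aA+b\,\mathrm{Id}$ is invertible on the dense subset where the analysis takes place, which follows from $A'$ being well-defined there.
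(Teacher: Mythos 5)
Your proposal is correct and follows essentially the same route as the paper: both arguments use that $\mathcal{S}([\gnabla],J)$ is two-dimensional together with the explicit isomorphisms \eqref{eq:solspaceiso} to expand one pair of solutions in terms of the other and read off the M\"obius relation. The only cosmetic difference is that the paper expands $h_{g'}$ and $\hat q$ in the basis $\{h_g,\,Ah_g\}$, which yields $A'=(cA+d\,\mathrm{Id})(aA+b\,\mathrm{Id})^{-1}$ directly and spares you the final inversion of the M\"obius transformation (and the attendant commutativity bookkeeping, which you nevertheless handle correctly).
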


\begin{proof}
Let $h,\hat{h}$ be the basis of $\mathcal{S}([\gnabla],J)$ such that
$$
h=h_g=g^{-1}\otimes (\mathrm{det}\,g)^{\tfrac{1}{2(n+1)}}\quad \mbox{and} \quad A=\varphi_g(\hat{h})=\hat{h}h^{-1}.
$$

Choosing $g'\in [g]$, $A'\in \mathcal A(g',J)$ leads to another basis $q, \hat q\in \mathcal{S}([\gnabla],J)$ defined in a similar way: 
$$
q=h_{g'} \quad \mbox{and} \quad  A'=\varphi_{g'}(\hat{q})=\hat{q}q^{-1}.
$$

 Then, for certain real numbers $a,b,c,d$ we have $q=b h+a \hat{h}$ and $\hat{q}=d h+c\hat{h}$ and hence
$$
A'=(d h+c\hat{h})(b h+a \hat{h})^{-1} = (c A + d\, \mathrm{Id})(aA+b \, \mathrm{Id})^{-1},
$$ 
as required.
\end{proof}

Thus, the eigenvalues of $A'$ as functions on $M$ behave in a similar way as those of $A$. In particular, $\rho'\ne\sigma'$ if and only if $\rho\ne\sigma$. Also the differentials $d\rho$ and $d\rho'$  are proportional (with nonzero coefficient) so that the distribution  $D^*= \mathrm{span}\{d(\rho+\sigma),d(\rho\sigma)\}$  (and therefore $JD$ and $D$) remain unchanged if we replace $\rho$ and $\sigma$ with $\rho'$ and $\sigma'$. Thus, we get 

\begin{cor}\label{cor:bols1}
The subsets $M'$ and $M^\circ$ and the distributions $D^*$, $D$ and $JD$ are c-projectively invariant. 
\end{cor}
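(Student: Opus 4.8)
The plan is to show that each of $M'$, $M^\circ$, $D^*$, $D$ and $JD$ admits a description purely in terms of the eigenvalues of $A$ and of $J$, and then to feed in the transformation rule of Lemma~\ref{lem:aboutL} and observe that this description is insensitive to the choice of $g'\in[g]$ and $A'\in\mathcal{A}(g',J)$. Throughout I use that, since $d(g,J)=2$, the order of a solution is a c-projective invariant, so any $A'\in\mathcal{A}(g',J)$ with $A'\neq\mathrm{const}\cdot\Id$ again has two non-constant eigenvalues $\rho',\sigma'$.

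First I would record the consequence of Lemma~\ref{lem:aboutL}: there are constants $a,b,c,d\in\R$ with $\rho'=(c\rho+d)/(a\rho+b)$ and $\sigma'=(c\sigma+d)/(a\sigma+b)$, and the matrix $\left(\begin{smallmatrix}c&d\\a&b\end{smallmatrix}\right)$ is invertible, since $cb-ad=0$ would force $A'$ to be a constant multiple of $\Id$. Hence $\rho'$ and $\sigma'$ are obtained from $\rho$ and $\sigma$ by one and the same real fractional-linear substitution $t\mapsto(ct+d)/(at+b)$, which is a bijection of $\R\cup\{\infty\}$ permuting $\C\setminus\R$. Therefore $\rho'=\sigma'$ iff $\rho=\sigma$, and $\rho'\in\R$ iff $\rho\in\R$, so the decomposition $M=M^r\sqcup M^{\mathrm{sing}}\sqcup M^{c}$ of Lemma~\ref{lem:eigenvaluestructureA} is read off identically from $A$ and from $A'$; in particular $M'=M^r\cup M^{c}$ is c-projectively invariant.

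Next I would differentiate: $d\rho'=(cb-ad)(a\rho+b)^{-2}\,d\rho$, and likewise for $\sigma'$, on the dense open set where the formula of Lemma~\ref{lem:aboutL} applies. Since $cb-ad\neq0$, the forms $d\rho'$ and $d\rho$ (resp.\ $d\sigma'$ and $d\sigma$) differ by a nowhere-vanishing factor, so $d\rho(p)\neq0$ iff $d\rho'(p)\neq0$ and similarly for $\sigma$; this gives the c-projective invariance of $M^\circ$. On $M^\circ$ it also gives $D^*=\mathrm{span}\{d\rho,d\sigma\}=\mathrm{span}\{d\rho',d\sigma'\}$ --- equivalently $D^*=\mathrm{span}\{d(\tr A),d(\sqrt{\det A})\}$, a bundle manifestly built from $A$ alone --- hence $D^*$ is c-projectively invariant. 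Finally, $D^*$ is by construction the annihilator of $JD$ in $T^*M$, and since $TM=D\oplus JD$ on $M^\circ$ with $\dim M=4$ the rank of $D^*$ is $2$; thus $JD$ is the annihilator of $D^*$ inside $TM$ --- an object no longer referring to a metric --- and $D=J(JD)$. So $D$ and $JD$ are c-projectively invariant.

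The only step that is not purely bookkeeping is the non-degeneracy $cb-ad\neq0$ of the fractional-linear map relating the eigenvalues of $A$ and $A'$; this is precisely where the hypothesis $d(g,J)=2$ is used, via the fact that the order of a solution is a class invariant, so a non-parallel $A'$ again has order two. I expect this to be the only real subtlety; everything else follows directly from Lemma~\ref{lem:aboutL}.
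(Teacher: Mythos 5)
Your proof is correct and follows essentially the same route as the paper: both deduce from Lemma~\ref{lem:aboutL} that the eigenvalues transform by a nondegenerate real fractional-linear map, so that $\rho'\neq\sigma'$ if and only if $\rho\neq\sigma$ and $d\rho'$ is a nowhere-vanishing multiple of $d\rho$, whence the invariance of $M'$, $M^\circ$ and $D^*$, and therefore of $JD$ and $D$. Your explicit check that $cb-ad\neq 0$ (via the class-invariance of the order) is a point the paper leaves implicit in the phrase ``proportional (with nonzero coefficient)''.
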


\begin{remark}
Lemma \ref{lem:aboutL} does not use the fact that $A$ has order two,  so  $M'$, $M^\circ$ and $D^*$ are c-projectively invariant in both cases,  i.e., for $A$ of order $1$ and $2$.  
\end{remark}

This corollary immediately implies that the c-projective vector field $v$ splits into two independent components with respect to the decomposition \eqref{eq:decomptangent} of $TM$ in the following sense.  Let $x,y,s,t$ be the coordinates in which $(g,J)$ and $A$ take the Liouville form \eqref{eq:normalform1},\eqref{eq:normalformA1} or the complex Liouville form \eqref{eq:normalform2},\eqref{eq:normalformA2}. Since these coordinates are adapted to the decomposition \eqref{eq:decomptangent}, we  obtain

\begin{cor}
\label{cor:hprojectiveinvariance}
In the coordinates $x,y,s,t$, every c-projective vector field splits into two independent components
$$
v=\underbrace{v^x(x,y)\partial_x+v^y(x,y)\partial_y}_{=:v_D}+\underbrace{v^s(s,t)\partial_s+v^t(s,t)\partial_t}_{=:v_{JD}}.
$$
\end{cor}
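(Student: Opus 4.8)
The plan is to deduce the splitting directly from the c-projective invariance of the distributions $D$ and $JD$ recorded in Corollary \ref{cor:bols1}. The first step is to observe that since $v$ is a c-projective vector field, its local flow $\phi_t$ preserves the $J$-planar curves and the complex structure, hence preserves the whole c-projective structure $([\gnabla],J)$; therefore it preserves every object that is canonically attached to this structure. In particular, by Corollary \ref{cor:bols1} the flow maps $M^\circ$ to $M^\circ$ and satisfies $(\phi_t)_* D = D$ and $(\phi_t)_* JD = JD$ on $M^\circ$. Concretely, this follows by applying Lemma \ref{lem:aboutL} to the pair $(\phi_t)^* g\in[g]$ and $(\phi_t)^* A\in\mathcal A((\phi_t)^*g,J)$: the eigenvalues of $A$ transform along the flow by a M\"obius transformation, so their vanishing loci, the condition $\rho\ne\sigma$, and the lines $\mathrm{span}\{d\rho\},\mathrm{span}\{d\sigma\}$ are all preserved, and with them $M^\circ$, $D^*$, $JD$ and $D$. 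Differentiating in $t$ at $t=0$ gives $[v,\Gamma(D)]\subseteq\Gamma(D)$ and $[v,\Gamma(JD)]\subseteq\Gamma(JD)$.

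The second step is to pass to the adapted coordinates. By construction of the Liouville coordinates \eqref{eq:normalform1} (and likewise the complex Liouville coordinates \eqref{eq:normalform2}) one has $\partial_s=K_1$, $\partial_t=K_2$, so $JD=\mathrm{span}\{\partial_s,\partial_t\}$, while \eqref{eq:v1v2} (resp.\ \eqref{eq:v1v2cc}) exhibits the spanning vector fields $V_1,V_2$ of $D$ as an invertible linear combination of $\partial_x,\partial_y$, so $D=\mathrm{span}\{\partial_x,\partial_y\}$. Now write $v=v^x\partial_x+v^y\partial_y+v^s\partial_s+v^t\partial_t$, the four components a priori being functions of all of $x,y,s,t$. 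From $[v,\partial_x]\in\Gamma(D)$ and $[v,\partial_y]\in\Gamma(D)$ one reads off that the $\partial_s$- and $\partial_t$-components of these brackets vanish, i.e.\ $\partial_x v^s=\partial_y v^s=\partial_x v^t=\partial_y v^t=0$; symmetrically, $[v,\partial_s]\in\Gamma(JD)$ and $[v,\partial_t]\in\Gamma(JD)$ give $\partial_s v^x=\partial_t v^x=\partial_s v^y=\partial_t v^y=0$. Hence $v^x,v^y$ are functions of $x,y$ only and $v^s,v^t$ are functions of $s,t$ only, which is exactly the asserted decomposition $v=v_D+v_{JD}$.

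There is no substantial obstacle here: the content is entirely in Corollary \ref{cor:bols1}, and the rest is the elementary bracket computation above. The only point requiring a little care is the precise meaning of ``c-projectively invariant'' --- namely that it forces the flow of $v$ to preserve $D$ and $JD$ pointwise (not just up to equivalence) and to keep the argument inside $M^\circ$ where these distributions are defined --- which is why I would spell out the Lemma \ref{lem:aboutL} justification in the first step.
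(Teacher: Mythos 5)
Your proof is correct and takes essentially the same route as the paper: the paper likewise deduces the splitting directly from the c-projective invariance of $M^\circ$, $D$ and $JD$ recorded in Corollary \ref{cor:bols1}, combined with the observation that the coordinates $x,y,s,t$ are adapted to the decomposition $TM=D\perp JD$. You have merely spelled out the flow-invariance argument (via Lemma \ref{lem:aboutL}) and the Lie-bracket computation that the paper treats as immediate.
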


Let us write $g_D,g_{JD}$ for the corresponding blocks of the metric. Using Corollary \ref{cor:hprojectiveinvariance} and the fact that $\partial_{s},\partial_{t}$ are Killing vector fields, it follows that $\mathcal{L}_v g$ decomposes into blocks
\begin{align}
\mathcal{L}_v g=\underbrace{\mathcal{L}_{v_D} g_{D}}_{\mbox{\tiny upper-left}}+\underbrace{\mathcal{L}_{v_D} g_{JD}}_{\mbox{\tiny lower-right}}+\underbrace{\mathcal{L}_{v_{JD}} g_{JD}}_{\mbox{\tiny lower-right}},\label{eq:blockdecomp}
\end{align}
where ``upper-left'' respectively ``lower-right'' refers to the blocks spanned by the pairs of forms $d x,d y$ and $d s,d t$ respectively.

Consequently, the \textsc{pde}s \eqref{eq:PDEvgJ},\eqref{eq:LieA} split into an upper-left block and a lower-right block. The upper-left block is a system of \textsc{pde}s of first order in the two independent variables $x,y$ on the functions $v^x(x,y),v^y(x,y)$ and the parameters of the metric $\rho(x),\sigma(y)$ in the Liouville case \eqref{eq:normalform1} and $\rho(z)$ in the complex Liouville case \eqref{eq:normalform2}. Solving this system, we can insert the obtained quantities in the lower-right block of \eqref{eq:PDEvgJ},\eqref{eq:LieA} which gives a linear \textsc{pde} system in two independent variables $s,t$ on the functions $v^s(s,t),v^t(s,t)$ with coefficients that do not depend on $s,t$.

\subsubsection{The Liouville case: Solving the \textsc{pde} system \eqref{eq:PDEvgJ} and \eqref{eq:LieA}}

Let us work in the Liouville coordinates \eqref{eq:normalform1},\eqref{eq:normalformA1} and recall that the \textsc{pde}s \eqref{eq:PDEvgJ} and \eqref{eq:LieA} split into two blocks of equations.

By direct calculation, we obtain that the upper-left block of \eqref{eq:PDEvgJ} is equivalent to the \textsc{pde}s
\begin{align}
\begin{array}{c}
v^x\frac{\partial\rho}{\partial x}-v^y\frac{\partial\sigma}{\partial y}+2(\rho-\sigma)\frac{\partial v^x}{\partial x}=-(\rho-\sigma)\left(\delta(2\rho+\sigma)+3\gamma\right),\vspace{1mm}\\
v^x\frac{\partial\rho}{\partial x}-v^y\frac{\partial\sigma}{\partial y}+2(\rho-\sigma)\frac{\partial v^y}{\partial y}=-(\rho-\sigma)\left(\delta(\rho+2\sigma)+3\gamma\right),\vspace{1mm}\\
\frac{\partial v^x}{\partial y}+\epsilon\frac{\partial v^y}{\partial x}=0.
\end{array}\label{eq:ULBLiouville1}
\end{align}

Similarly, we obtain that the upper-left block of \eqref{eq:LieA} is equivalent to the \textsc{pde}s
\begin{align}
\begin{array}{c}
v^x\frac{\partial\rho}{\partial x}=-\delta \rho^2+(\beta-\gamma)\rho+\alpha,\quad v^y\frac{\partial\sigma}{\partial y}=-\delta \sigma^2+(\beta-\gamma)\sigma+\alpha,\vspace{1mm}\\
\frac{\partial v^x}{\partial y}=\frac{\partial v^y}{\partial x}=0.
\end{array}\label{eq:ULBLiouville2}
\end{align}

We see that, since $v^x,\rho$ only depend on $x$ and $v^y,\sigma$ only depend on $y$, the equations \eqref{eq:ULBLiouville1},\eqref{eq:ULBLiouville2} give us a system of \textsc{ode}s.

Moreover, we can simplify \eqref{eq:ULBLiouville1} by inserting \eqref{eq:ULBLiouville2} and are left with the \textsc{ode} system
\begin{align}
\begin{array}{c}
v^x\frac{\partial\rho}{\partial x}=-\delta \rho^2+(\beta-\gamma)\rho+\alpha,\quad v^y\frac{\partial\sigma}{\partial y}=-\delta \sigma^2+(\beta-\gamma)\sigma+\alpha,\vspace{1mm}\\
2\frac{\partial v^x}{\partial x}=-\delta\rho-\beta-2\gamma,\quad 2\frac{\partial v^y}{\partial y}=-\delta\sigma-\beta-2\gamma.
\end{array}\label{eq:ULBLiouville3}
\end{align}

Since the functions $v^x,v^y$ are non-vanishing on a dense open subset of the coordinate neighborhood we are working in, we can introduce new coordinates $x_1,x_2$ be requiring
$d x_1=\frac{1}{v^x}d x,\,\,\,d x_2=\frac{1}{v^y}d y.$ In these coordinates we have
$$
v_D=\partial_{x_1}+\partial_{x_2},
$$
and the \textsc{ode} system  \eqref{eq:ULBLiouville3} can be written as

\begin{align}
\begin{array}{c}
\frac{\partial\rho}{\partial x_1}=-\delta \rho^2+(\beta-\gamma)\rho+\alpha,\quad \frac{\partial\sigma}{\partial x_2}=-\delta \sigma^2+(\beta-\gamma)\sigma+\alpha,\vspace{1mm}\\
\frac{2}{v^x}\frac{\partial v^x}{\partial x_1}=-\delta\rho-\beta-2\gamma, \quad
\frac{2}{v^y}\frac{\partial v^y}{\partial x_2}=-\delta\sigma-\beta-2\gamma.
\end{array}\label{eq:ODEsLiouville}
\end{align}

First let us solve the lower-right block of \eqref{eq:PDEvgJ} on the unknown components $v^s(s,t),v^t(s,t)$ of the c-projective vector field $v$. This can be done by replacing the derivatives of $\rho,\sigma,v^x,v^y$ by the equations \eqref{eq:ODEsLiouville} - we do not need to know the explicit solutions to \eqref{eq:ODEsLiouville}. Recall that the lower-right block of \eqref{eq:PDEvgJ}  reads
$$
\mathcal{L}_{v_{JD}}g_{JD}=-\mathcal{L}_{v_{D}}g_{JD}-\delta g_{JD}A_{JD}-(\delta(\rho+\sigma)+3\gamma)g_{JD},
$$
where $v_D=\partial_{x_1}+\partial_{x_2}$ and $v_{JD}=v^s\partial_s+v^t\partial_t$.

Writing for short $g_{st}=g(\partial_s,\partial_t)$ etc., a straightforward calculation yields that the left-hand side of the above equation is given by
\begin{align}
\begin{array}{c}
\mathcal{L}_{v_{JD}}g_{JD}=2\left(\frac{\partial v^s}{\partial s} g_{ss}+\frac{\partial v^t}{\partial s}g_{st}\right)ds^2+2\left(\frac{\partial v^s}{\partial t}g_{st}+\frac{\partial v^t}{\partial t}g_{tt}\right)dt^2\vspace{1mm}\\
+2\left(\frac{\partial v^s}{\partial t}g_{ss}+\frac{\partial v^t}{\partial s}g_{tt}+\left(\frac{\partial v^s}{\partial s}+\frac{\partial v^t}{\partial t}\right)g_{st}\right)dsdt.
\end{array}\label{eq:LHSPDE}
\end{align}

On the other hand, using the equations in \eqref{eq:ODEsLiouville}, a straightforward calculation shows that the right-hand side of the above \textsc{pde} is equal to
\begin{align}
\begin{array}{c}
-\mathcal{L}_{v_{D}}g_{JD}-\delta g_{JD}A_{JD}-(\delta(\rho+\sigma)+3\gamma)g_{JD}\vspace{1mm}\\
=2\left(-(\beta+2\gamma)g_{ss}-\delta g_{st}\right)ds^2+2\left(-\alpha g_{st}-(2\beta+\gamma)g_{tt}\right)dt^2+\vspace{1mm}\\
+2\left(-\alpha g_{ss}-\delta g_{tt}-3(\beta+\gamma)g_{st}\right)dsdt.
\end{array}\label{eq:RHSPDE}
\end{align}

Comparing \eqref{eq:LHSPDE} and \eqref{eq:RHSPDE} and using the non-degeneracy of $g_{JD}$, we obtain the equations
$$
\frac{\partial v^s}{\partial s}=-(\beta+2\gamma),\,\,\,\frac{\partial v^t}{\partial s}=-\delta,\,\,\,\frac{\partial v^s}{\partial t}=-\alpha,\,\,\,\frac{\partial v^t}{\partial t}=-(2\beta+\gamma).
$$

It follows that, up to adding constant linear combinations of the Killing vector fields $\partial_s,\partial_t$, the c-projective vector field $v$ is given by
$$
v=\partial_{x_1}+\partial_{x_2}-((\beta+2\gamma)s+\alpha\, t)\partial_s-(\delta\,s+(2\beta+\gamma)t)\partial_t.
$$

It is straightforward to check that the lower-right block of \eqref{eq:LieA} is equivalent to the equations $\tfrac{\partial v^s}{\partial t}=-\alpha,\tfrac{\partial v^t}{\partial s}=-\delta,\tfrac{\partial v^t}{\partial t}-\tfrac{\partial v^s}{\partial s}=-(\beta-\gamma)$. Thus, the solution for $v$ from above also satisfies \eqref{eq:LieA}.

Now let us choose the metric $g$ appropriately within its c-projective class and choose an appropriate solution $A\in \mathcal{A}(g,J)$ such that, in the basis $h,\hat{h}$ of $\mathcal{S}([\gnabla],J)$ corresponding to $g,A$, one of the normal forms in \eqref{eq:normalformsv} holds for the endomorphism $\mathcal{L}_v:\mathcal{S}([\gnabla],J)\rightarrow \mathcal{S}([\gnabla],J)$. We obtain the following:
\begin{itemize}
\item Let $\left(\begin{array}{cc}
\gamma&\alpha\\\delta&\beta
\end{array}\right)=\left(\begin{array}{cc}
0&1\\0&0
\end{array}\right).$ Then the \textsc{ode} system \eqref{eq:ODEsLiouville} takes the form
$$
\frac{\partial\rho}{\partial x_1}=1,\,\,\,\frac{\partial\sigma}{\partial x_2}=1,\,\,\,\frac{\partial v^x}{\partial x_1}=\frac{\partial v^y}{\partial x_2}=0.
$$

Thus, the functions $\rho,\sigma,v^x,v^y$ are given by
$$
\rho(x_1)=x_1,\,\,\,\sigma(x_2)=x_2,\,\,\,v^x=c,\,\,\,v^y=d,
$$
where $c,d$ are non-zero constants.

Up to rescaling and up to adding constant linear combinations of the Killing vector fields $\partial_s,\partial_t$, the c-projective vector field $v$ is given by
$$
v=\partial_{x_1}+\partial_{x_2}-t\partial_s.
$$

\item Let $\left(\begin{array}{cc}
\gamma&\alpha\\\delta&\beta
\end{array}\right)=\left(\begin{array}{cc}
1&0\\0&\beta
\end{array}\right)\mbox{ for }\beta\neq 1.$ Then the \textsc{ode} system \eqref{eq:ODEsLiouville} takes the form
$$
\frac{\partial\rho}{\partial x_1}=(\beta-1)\rho,\frac{\partial\sigma}{\partial x_2}=(\beta-1)\sigma,\frac{2}{v^x}\frac{\partial v^x}{\partial x_1}=-\beta-2,\frac{2}{v^y}\frac{\partial v^y}{\partial x_2}=-\beta-2.
$$

Thus, $\sigma,\rho,v^x,v^y$ are given by
$$
\rho(x_1)=c\mathrm{e}^{(\beta-1)x_1},\,\,\,\sigma(x_2)=d\mathrm{e}^{(\beta-1)x_2},
$$
$$
v^x(x_1)=c'\mathrm{e}^{-\tfrac{1}{2}(\beta+2)x_1},\,\,\,v^y(x_2)=d'\mathrm{e}^{-\tfrac{1}{2}(\beta+2)x_2}.
$$

Up to rescaling and up to adding constant linear combinations of the Killing vector fields $\partial_s,\partial_t$, the c-projective vector field $v$ is given by
$$
v=\partial_{x_1}+\partial_{x_2}-(\beta+2)s\,\partial_s-(2\beta+1)t\,\partial_t.
$$

\item Let $\left(\begin{array}{cc}
\gamma&\alpha\\\delta&\beta
\end{array}\right)=\left(\begin{array}{cc}
1&1\\0&1\end{array}\right).$ Then, the \textsc{ode} system  \eqref{eq:ODEsLiouville} takes the form
$$
\frac{\partial\rho}{\partial x_1}=1,\frac{\partial\sigma}{\partial x_2}=1,\frac{2}{v^x}\frac{\partial v^x}{\partial x_1}=-3,\frac{2}{v^y}\frac{\partial v^y}{\partial x_2}=-3.
$$

Thus, $\rho,\sigma,v^x,v^y$ are given by
$$
\rho(x_1)=x_1,\,\,\,\sigma(x_2)=x_2,\,\,\,v^x(x_1)=ce^{-\tfrac{3}{2}x_1},\,\,\,v^y(x_2)=de^{-\tfrac{3}{2}x_2}.
$$

Up to rescaling and up to adding constant linear combinations of the Killing vector fields $\partial_s,\partial_t$, the c-projective vector field $v$ is given by
$$
v=\partial_{x_1}+\partial_{x_2}-(3s+t)\partial_s-3t\,\partial_t.
$$

\item Let $\left(\begin{array}{cc}
\gamma&\alpha\\\delta&\beta
\end{array}\right)=\left(\begin{array}{cc}
\beta&-1\\1&\beta
\end{array}\right).$ Then, the \textsc{ode} system \eqref{eq:ODEsLiouville} takes the form
$$
\frac{\partial\rho}{\partial x_1}=- \rho^2-1,\frac{\partial\sigma}{\partial x_2}=-\sigma^2-1,\frac{2}{v^x}\frac{\partial v^x}{\partial x_1}=-\rho-3\beta,\frac{2}{v^y}\frac{\partial v^y}{\partial x_2}=-\sigma-3\beta.
$$

Thus, $\rho,\sigma,v^x,v^y$ are given by
$$
\rho(x_1)=-\tan(x_1),\,\,\,\sigma(x_2)=-\tan(x_2),
$$
$$
v^x(x_1)=\frac{c\mathrm{e}^{-\tfrac{3}{2}\beta x_1}}{\sqrt{|\cos(x_1)|}},\,\,\,v^y(x_2)=\frac{d\mathrm{e}^{-\tfrac{3}{2}\beta x_2}}{\sqrt{|\cos(x_2)|}}.
$$

Up to rescaling and up to adding constant linear combinations of the Killing vector fields $\partial_s,\partial_t$, the c-projective vector field $v$ is given by
$$
v=\partial_{x_1}+\partial_{x_2}-(3\beta\, s-t)\partial_s-(s+3\beta\, t)\partial_t.
$$

\end{itemize}

\subsubsection{The complex Liouville case: Solving the \textsc{pde} systems \eqref{eq:PDEvgJ} and \eqref{eq:LieA}}

Let us work in the complex Liouville coordinates \eqref{eq:normalform2},\eqref{eq:normalformA2} and consider first the upper-left block of the \textsc{pde} systems \eqref{eq:PDEvgJ} and \eqref{eq:LieA}.

By direct calculation, we obtain that the upper-left block of \eqref{eq:PDEvgJ} is equivalent to the \textsc{pde}s
\begin{align}
\begin{array}{c}
\bar{v}^z\frac{\partial\bar{\rho}}{\partial \bar{z}}-v^z\frac{\partial \rho}{\partial z}+2(\bar{\rho}-\rho)\frac{\partial v^z}{\partial z}=-(\bar{\rho}-\rho)(\delta \rho+\delta (\rho+\bar{\rho})+3\gamma),\vspace{1mm}\\
\frac{\partial\bar{v}^v}{\partial z}-\frac{\partial v^z}{\partial\bar{z}}=0.
\end{array}\label{eq:ULBcplxLiouville1}
\end{align}

Similarly, we obtain that the upper-left block of \eqref{eq:LieA} is equivalent to the \textsc{pde}s
\begin{align}
\begin{array}{c}
v^z\frac{\partial \rho}{\partial z}=-\delta \rho^2+(\beta-\gamma)\rho+\delta,\quad\frac{\partial v^z}{\partial \bar{z}}=0.
\end{array}\label{eq:ULBcplxLiouville2}
\end{align}

In particular, we see that $v^z$ is a holomorphic function. Using \eqref{eq:ULBcplxLiouville2}, we can simplify \eqref{eq:ULBcplxLiouville1} and finally are left with the two equations
\begin{align}
\begin{array}{c}
v^z\frac{\partial \rho}{\partial z}=-\delta \rho^2+(\beta-\gamma)\rho+\alpha,\quad 2\frac{\partial v^z}{\partial z}=-\delta \rho-\beta-2\gamma.
\end{array}\label{eq:ULBcplxLiouville3}
\end{align}

As in the real case, it is now convenient to introduce new coordinates. Since the function $v^z$ is holomorphic, the $1$-form $\frac{1}{v^z}dz$ is closed and we can introduce a holomorphic change of coordinates by $dw=\frac{1}{v^z}dz$.

In this new coordinate the equations  \eqref{eq:ULBcplxLiouville3} take the form
\begin{align}
\begin{array}{c}
\frac{\partial \rho}{\partial w}=-\delta \rho^2+(\beta-\gamma)\rho+\alpha,\quad \frac{2}{v^z}\frac{\partial v^z}{\partial w}=-\delta \rho-\beta-2\gamma.
\end{array}\label{eq:ODEscplxLiouville}
\end{align}

Moreover, $v_D$ is given by $v_D=\partial_{w}+\partial_{\bar{w}}=\partial_{x_1},$ where $w=x_1+\mathrm{i}\, x_2$.

Similar to the case of real Liouville coordinates, we can solve the lower-right block of \eqref{eq:PDEvgJ} with respect to the unknown functions $v^s(s,t),v^t(s,t)$ by replacing the derivatives of $\rho,v^z$ by the equations \eqref{eq:ODEscplxLiouville}. By a straightforward calculation, we obtain exactly the same equations \eqref{eq:LHSPDE},\eqref{eq:RHSPDE} (with $\rho+\sigma$ replaced by $\rho+\bar{\rho}$) as in the real case. Thus, up to adding constant linear combinations of the Killing vector fields $\partial_s,\partial_t$, the c-projective vector field $v$ is given by
$$
v=\partial_w+\partial_{\bar{w}}-((\beta+2\gamma)s+\alpha\, t)\partial_s-(\delta\,s+(2\beta+\gamma)t)\partial_t.
$$

It is straightforward to check that this vector field also solves the equation $\eqref{eq:LieA}$.

Let us now solve the system \eqref{eq:ODEscplxLiouville} on the functions $\rho,v^z$ in each of the cases \eqref{eq:normalformsv} for the normal forms of $\mathcal{L}_v$.
\begin{itemize}
\item Let $\left(\begin{array}{cc}
\gamma&\alpha\\\delta&\beta
\end{array}\right)=\left(\begin{array}{cc}
0&1\\0&0
\end{array}\right).$ Then the \textsc{ode} systems \eqref{eq:ODEscplxLiouville} become equal to the equations
$$
\frac{\partial \rho}{\partial w}=1,\,\,\,\frac{2}{v^z}\frac{\partial v^z}{\partial w}=0.
$$
Thus, the functions $h,v^z$ are given by
$$
\rho(w)=w,\,\,\,v^z(w)=c+\mathrm{i}\,d
$$
for a non-zero constant $c+\mathrm{i}\, d$.
Up to rescaling and up to adding constant linear combinations of the Killing vector fields $\partial_s,\partial_t$, the c-projective vector field $v$ is given by
$$
v=\partial_w+\partial_{\bar{w}}- t\,\partial_s.
$$

\item Let $\left(\begin{array}{cc}
\gamma&\alpha\\\delta&\beta
\end{array}\right)=\left(\begin{array}{cc}
1&0\\0&\beta
\end{array}\right)\mbox{ for }\beta\neq 1.$ Then the \textsc{ode} system \eqref{eq:ODEscplxLiouville} takes the form
$$
\frac{\partial \rho}{\partial w}=(\beta-1)\rho,\,\,\,\frac{2}{v^z}\frac{\partial v^z}{\partial w}=-\beta-2.
$$

Thus, the functions $\rho,v^z$ are given by
$$
\rho(w)=e^{(\beta-1)w},\,\,\,v^z(w)=(c+\mathrm{i}\, d)e^{-\frac{1}{2}(\beta+2)w}.
$$
Up to rescaling and up to adding constant linear combinations of the Killing vector fields $\partial_s,\partial_t$, the c-projective vector field $v$ is given by
$$
v=\partial_w+\partial_{\bar{w}}-(\beta+2)s\,\partial_s-(2\beta+1)t\,\partial_t.
$$

\item Let $\left(\begin{array}{cc}
\gamma&\alpha\\\delta&\beta
\end{array}\right)=\left(\begin{array}{cc}
1&1\\0&1\end{array}\right).$ Then the \textsc{ode} system \eqref{eq:ODEscplxLiouville} takes the form
$$
\frac{\partial \rho}{\partial w}=1,\,\,\,\frac{2}{v^z}\frac{\partial v^z}{\partial w}=-3.
$$
Thus, the functions $\rho,v^z$ are given by
$$
\rho(w)=w,\,\,\,v^z(w)=(c+\i d)\mathrm{e}^{-\frac{3}{2}w}.
$$
Up to rescaling and up to adding constant linear combinations of the Killing vector fields $\partial_s,\partial_t$, the c-projective vector field $v$ is given by
$$
v=\partial_w+\partial_{\bar{w}}-(3s+t)\partial_s-3t\,\partial_t.
$$

\item Let $\left(\begin{array}{cc}           
\gamma&\alpha\\\delta&\beta
\end{array}\right)=\left(\begin{array}{cc}
\beta&-1\\1&\beta
\end{array}\right).$ Then the \textsc{ode} system \eqref{eq:ODEscplxLiouville} takes the form
$$
\frac{\partial \rho}{\partial w}=-\rho^2-1,\,\,\,\frac{2}{v^z}\frac{\partial v^z}{\partial w}=- \rho-3\beta.
$$
Thus, the functions $\rho,v^z$ are given by
$$
\rho(w)=-\tan(w),\,\,\,v^z(w)=\frac{(c+\i d)\mathrm{e}^{-\tfrac{3}{2}\beta w}}{\sqrt{\cos(w)}}.
$$
Up to rescaling and up to adding constant linear combinations of the Killing vector fields $\partial_s,\partial_t$, the c-projective vector field $v$ is given by
$$
v=\partial_w+\partial_{\bar{w}}-(3\beta\, s- t)\partial_s-(s+3\beta\,t)\partial_t.
$$

\end{itemize}

%%%%%%%%%%%%%%%%%%%%%%%%%%%%%%%%%%%%%%%%%%

\subsection{The case when $A$ has order one}\label{sec:const}

%%%%%%%%%%%%%%%%%%%%%%%%%%%%%%%%%%%%%%%%%%

Suppose that $A\in \mathcal{A}(g,J)$ has a constant eigenvalue $c$ and a non-constant eigenvalue $\rho$. In this section, we solve the \textsc{pde} systems \eqref{eq:PDEvgJ} and \eqref{eq:LieA} for the normal forms \eqref{eq:normalform3},\eqref{eq:normalformA3}.

Let us introduce a unitary coframing $\eta^1,\eta^2$ on $(\Sigma,h,j)$ such that $dx,\theta,\eta^1,\eta^2$ is a coframing on $M$. Note that we have
$$
\Omega=h(j.,.)=\eta^1\wedge \eta^2.
$$

Let us introduce functions $h^i$ on $\Sigma$ by $d \eta^i=h^i\eta^1\wedge \eta^2.$ Then, for the dual frame $\partial_x,\partial_t,\eta_1,\eta_2$ we have the relation
$$
[\eta_1,\eta_2]=\partial_t-h^1\eta_1-h^2\eta_2,
$$
all the other Lie bracket relations being zero.

Let us write the c-projective vector field into the form
\begin{align}
v=v^x\partial_x+v^t\partial_t+v^1\eta_1+v^2\eta_2.\label{eq:decompv}
\end{align}

We first evaluate the \textsc{pde} system \eqref{eq:LieA}. Note that in the frame $\partial_x,\partial_t,\eta_1,\eta_2$, the endomorphism $A$ is diagonal:
$$
A=\rho(\partial_x \otimes dx+\partial_t \otimes \theta)+c\left(\sum_{i=1}^2 \eta_i\otimes \eta^i\right).
$$

A straightforward calculation yields that \eqref{eq:LieA} is equivalent to the equations
\begin{align}
\begin{array}{c}
v^x\frac{\partial\rho}{\partial x}=-\delta\rho^2+(\beta-\gamma)\rho+\alpha,\quad\eta_1(v^x)=\eta_2(v^x)=0,\quad\eta_1(v^t)+ v^2=0,\vspace{1mm}\\
\eta_2(v^t)- v^1=0,\quad\frac{\partial v^1}{\partial x}=\frac{\partial v^1}{\partial t}=\frac{\partial v^2}{\partial x}=\frac{\partial v^2}{\partial t}=0,\quad
-\delta c^2+(\beta-\gamma)c+\alpha=0.
\end{array}\label{eq:LieAconstantev}
\end{align}

In particular, we see that the vector field $v_{\Sigma}=v^1\eta_1+v^2\eta_2$ is indeed (the lift to the horizontal distribution $\mathcal{H}=\mathrm{span}\{\partial_x,\partial_t\}^\perp$ of) a vector field on $\Sigma$. Moreover, we can apply $d \rho$ to the equation
$$
[\partial_t,v]=\frac{\partial v^x}{\partial t}\partial_x+\frac{\partial v^t}{\partial t}\partial_t
$$
which gives
$$
\frac{\partial v^x}{\partial t}\rho'=\partial_t (v(\rho))=\partial_t (-\delta\rho^2+(\beta-\gamma)\rho+\alpha)=0.
$$

Thus, $v^x$ is a function of $x$ alone. From this, since $v$ is holomorphic and $\partial_x$ is proportional to $J\partial_t$, we also obtain that
$$
[\partial_x,v]=\frac{\partial v^x}{\partial x}\partial_x,
$$
implying that $v^t$ does not depend on $x$.

As in the preceding sections, we introduce a new coordinate $x_1$ by $dx_1=\frac{1}{v^x}dx$. In particular, we have $v=\partial_{x_1}+v^t\partial_t+v_{\Sigma}$.

Let us now evaluate equation \eqref{eq:PDEvgJ}. It is straightforward to see that \eqref{eq:PDEvgJ} is equivalent to the equations
\begin{align}
\begin{array}{c}
\frac{2}{v^x}\frac{\partial v^x}{\partial x_1}=-(\delta\rho+\beta+2\gamma),\quad\frac{\partial v^t}{\partial t}=-(\delta c+\beta+2\gamma),\vspace{1mm}\\
\mathcal{L}_{v_{\Sigma}}h=-(\delta c+\beta+2\gamma)h.
\end{array}\label{eq:PDEvgJconstantev}
\end{align}

We see that the vector field $v_{\Sigma}$ is a homothety for  the metric $h$ on the base $\Sigma$. Therefore, in certain coordinates $u_1,u_2$, we have
\begin{align}
v_\Sigma=\partial_{u_1}, \quad h=e^{\lambda u_1}G(u_2)(du_1^2+du_2^2),\quad \Omega=e^{\lambda u_1}G(u_2)du_1\wedge du_2\label{eq:solonbase}
\end{align}
for an arbitrary function $G(u_2)$, where  the constant $\lambda$ is defined as
$$
\lambda=-(\delta c+\beta+2\gamma).
$$

Note that we can write $(jv_{\Sigma})^b=-v^2\eta^1+v^1\eta^2$ independently of the chosen frame $\eta_1,\eta_2$ on $\Sigma$. By \eqref{eq:LieAconstantev},\eqref{eq:PDEvgJconstantev}, given $h,\Omega$ and $v_{\Sigma}$ as in \eqref{eq:solonbase}, it remains to solve the equations
\begin{align}
\begin{array}{c}
\frac{\partial\rho}{\partial x_1}=-\delta\rho^2+(\beta-\gamma)\rho+\alpha,\quad\frac{2}{v^x}\frac{\partial v^x}{\partial x_1}=-(\delta\rho+\beta+2\gamma),\vspace{1mm}\\
dv^t=\lambda\theta+(jv_{\Sigma})^b,\quad d\tau=\Omega,\vspace{1mm}\\
-\delta c^2+(\beta-\gamma)c+\alpha=0
\end{array}\label{eq:eqconstnonconst}
\end{align}
on $\rho,v^x,v^t$ and $\tau$. We will solve these equations in each of the cases for the normal forms of $\mathcal{L}_v$ as they appear in Lemma \ref{lem:order1}. Note that in the first and third case, we will use the transpose matrix, since by Lemma \ref{lem:order1}, the first basis vector $h$ is degenerate but the second basis vector $\hat{h}$ is not and hence corresponds to a metric.
\begin{itemize}
\item Let $\left(\begin{array}{cc}
\gamma&\alpha\\\delta&\beta
\end{array}\right)=\left(\begin{array}{cc}
0&0\\1&0
\end{array}\right).$ Then, $c=0$ and $\lambda=0$, thus the formulas in \eqref{eq:solonbase} become
\begin{align}
v_\Sigma=\partial_{u_1}, \quad h=G(u_2)(du_1^2+du_2^2), \quad \Omega=G(u_2)du_1\wedge du_2.\nonumber
\end{align}
and \eqref{eq:eqconstnonconst} reads
\begin{align}
\begin{array}{c}
\frac{\partial\rho}{\partial x_1}=-\rho^2, \quad \frac{2}{v^x}\frac{\partial v^x}{\partial x_1}=-\rho, \quad d v^t=G(u_2)du_2, \quad d\tau=G(u_2)du_1\wedge du_2.
\end{array}\nonumber
\end{align}

The solutions for $\rho,v^x$ are
$$
\rho(x_1)=\frac{1}{x_1}, \quad  v^x (x_1)=\frac{c_1}{\sqrt{|x|}}.
$$

Further, we can set $\tau=u_1 G(u_2)du_2$ which solves $d \tau=\Omega$. By a change of variables $d\tilde{u}_2=G(u_2)du_2$, we have
$$
\tau=u_1d\tilde{u}_2, \quad h=G(\tilde{u}_2)du_1^2+\frac{1}{G(\tilde{u}_2)}d\tilde{u}_2^2,  \quad d v^t=d \tilde{u}_2.
$$

Finally, the solution for $v$ (up to rescaling and adding constant multiples of the Killing vector field $\partial_t$) is
$$
v=\partial_{x_1}+\tilde{u}_2\partial_t+\partial_{u_1}.
$$

\item Let $\left(\begin{array}{cc}
\gamma&\alpha\\\delta&\beta
\end{array}\right)=\left(\begin{array}{cc}
1&0\\0&\beta
\end{array}\right)\mbox{ for }\beta\neq 1.$ Then $c=0$ and $\lambda=-(\beta+2)$, thus the formulas in \eqref{eq:solonbase} become
\begin{align}
v_\Sigma=\partial_{u_1},h=e^{-(\beta+2)u_1}G(u_2)(du_1^2+du_2^2),\Omega=e^{-(\beta+2)u_1}G(u_2)du_1\wedge du_2.\nonumber
\end{align}
and \eqref{eq:eqconstnonconst} takes the form
\begin{align}
\begin{array}{c}
\frac{\partial\rho}{\partial x_1}=(\beta-1)\rho,\quad \frac{2}{v^x}\frac{\partial v^x}{\partial x_1}=-(\beta+2),\vspace{1mm}\\
d v^t=-(\beta+2)(dt-\tau)+e^{-(\beta+2)u_1}G(u_2)du_2,\vspace{1mm}\\
d\tau=e^{-(\beta+2)u_1}G(u_2)du_1\wedge du_2.
\end{array}\nonumber
\end{align}
The solutions for $\rho,v^x$ are
$$
\rho(x_1)=c_1e^{(\beta-1)x_1}, \quad v^x(x_1)=d_1 e^{-\frac{1}{2}(\beta+2)x_1}.
$$
Further, we have
\begin{itemize}
\item Subcase $\beta+2=0$: we introduce a new variable $d\tilde{u}_2=G(u_2)d u_2$. Then,
$$
\tau=u_1d\tilde{u}_2, \quad h=G(\tilde{u}_2)du_1^2+\frac{1}{G(\tilde{u}_2)}d\tilde{u}_2^2, \quad d v^t=d \tilde{u}_2.
$$

Finally, the solution for $v$ (up to rescaling and adding constant multiples of the Killing vector field $\partial_t$) is
$$
v=\partial_{x_1}+\tilde{u}_2\partial_t+\partial_{u_1}.
$$

\item Subcase $\beta+2\neq 0$: we can choose
$$
\tau=-\tfrac{1}{\beta+2}e^{-(\beta+2)u_1}G(u_2) du_2
$$
and obtain $d v^t=-(\beta+2)dt$. Finally, the solution for $v$ (up to rescaling and adding constant multiples of the Killing vector field $\partial_t$) is
$$
v=\partial_{x_1}-(\beta+2)t\partial_t+\partial_{u_1}.
$$

\end{itemize}

\item Let $\left(\begin{array}{cc}
\gamma&\alpha\\\delta&\beta
\end{array}\right)=\left(\begin{array}{cc}
1&0\\1&1\end{array}\right).$ Then $c=0$ and $\lambda=-3$, thus the formulas in \eqref{eq:solonbase} become
\begin{align}
v_\Sigma=\partial_{u_1}, \quad h=e^{-3u_1}G(u_2)(du_1^2+du_2^2), \quad \Omega=e^{-3 u_1}G(u_2)du_1\wedge du_2.\nonumber
\end{align}

and \eqref{eq:eqconstnonconst} takes the form
\begin{align}
\begin{array}{c}
\frac{\partial\rho}{\partial x_1}=-\rho^2, \quad \frac{2}{v^x}\frac{\partial v^x}{\partial x_1}=-\rho-3,\vspace{1mm}\\
d v^t=-3(dt-\tau)+e^{-3 u_1}G(u_2)du_2, \quad d\tau=e^{-3u_1 }G(u_2)du_1\wedge du_2.
\end{array}\nonumber
\end{align}

The solutions for $\rho,v^x$ are
$$
\rho(x_1)=\frac{1}{x_1}, \quad v^x (x_1)=\frac{c_1 e^{-\frac{3}{2}x_1}}{\sqrt{|x_1|}}.
$$

Further, we can set $\tau=-\frac{1}{3}e^{-3 u_1 }G(u_2)du_2$ which solves $d \tau=\Omega$. From this we obtain $d v^t =-3dt$.

Finally, the solution for $v$ (up to rescaling and adding constant multiples of the Killing vector field $\partial_t$) is
$$
v=\partial_{x_1}-3t\partial_t+\partial_{u_1}.
$$

\end{itemize}

This finally completes the last part of Theorem \ref{thm:normalformwithv}.

%%%%%%%%%%%%%%%%%%%%%%%%%%%%%%%%%%%%%%%%%%

\section{Proof of the Yano-Obata conjecture in the  $4$-dimensional  pseudo-Riemannian case}

%%%%%%%%%%%%%%%%%%%%%%%%%%%%%%%%%%%%%%%%%%

In this section, we prove the Yano-Obata conjecture (Theorem \ref{YOconjecture}).

%%%%%%%%%%%%%%%%%%%%%%%%%%%%%%%%%%%%%%%%%%

 \subsection{The complex Liouville case}\label{sect5}

%%%%%%%%%%%%%%%%%%%%%%%%%%%%%%%%%%%%%%%%%%

First, we exclude the complex Liouville case from our further considerations (see Theorem~\ref{thm:normalformwithv}). Our goal is to prove

\begin{thm}\label{thm:closedcplxconjugate}
Let $(M,g,J)$ be a closed connected   K\"ahler  surface   and suppose $A\in \mathcal{A}(g,J)$ has a complex eigenvalue $\rho=\mathcal{R}+\mathrm{i}\mathcal{I}$, $\mathcal I\ne 0$,  at least at one point. Then, $A$ is parallel.
\end{thm}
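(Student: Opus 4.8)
The plan is to argue by contradiction: assuming $A$ is not parallel, I want to show that one eigenvalue of $A$ must nonetheless be constant, which forces $\tr A$ to be constant and hence, by \eqref{eq:main}, $\gnabla A=0$. Since $A$ is non-parallel and complex at one point, Lemma \ref{lem:eigenvaluestructureA} gives that the set $M^{c}$ where $A$ has complex conjugate eigenvalues $\rho=\overline\sigma$ with $\mathcal I:=\operatorname{Im}\rho\neq 0$ is open and non-empty, that $M^{\mbox{\tiny sing}}$ has empty interior, and that $M'=M^{r}\cup M^{c}$ is dense; by Lemma \ref{lem:propM0} the subset $M^{\circ}$ is open and dense as well, and on $M^{c}\cap M^{\circ}$ Theorem \ref{thm:normalform} puts $g,A$ into the complex Liouville normal form \eqref{eq:normalform2}, \eqref{eq:normalformA2}, so that $\rho=\rho(z)$ is a holomorphic function of the single complex variable $z=x+\i y$. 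I will use that $\mathcal I^{2}=\tfrac14(4\mu_{2}-\mu_{1}^{2})$, with $\mu_{1},\mu_{2}$ the globally defined smooth functions of \eqref{eq:defsigmai}; thus, after fixing on each connected component of $M^{c}$ the sign for which $\mathcal I>0$, the function $\mathcal I$ is smooth on $M^{c}$, extends continuously to $\overline{M^{c}}$, and vanishes on $\partial M^{c}\subseteq M^{\mbox{\tiny sing}}$ (where the eigenvalue of $A$ is real).

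The core of the argument is that $\mathcal I$ is, in the appropriate sense, a harmonic function. Holomorphicity of $\rho(z)$ means that on each leaf of the distribution $D$ lying in $M^{c}\cap M^{\circ}$, equipped with the conformal structure in which $z=x+\i y$ is a holomorphic coordinate, the restriction of $\mathcal I=\operatorname{Im}\rho$ is harmonic; at the same time, by Corollary \ref{cor:commutekilling} the functions $\mu_{1},\mu_{2}$ --- and hence $\mathcal I$ --- are constant along the commuting Killing vector fields $K_{1},K_{2}$ spanning $JD$, and \eqref{eq:normalform2} is block diagonal with respect to the splitting $TM=D\oplus JD$. Consequently Hopf's strong maximum principle is available for $\mathcal I$ on $M^{c}\cap M^{\circ}$: an interior maximum can only occur where $\mathcal I$ is locally constant. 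One should note that the conformal structure making $\rho$ holomorphic on the leaves of $D$ is \emph{not} the one induced by $g$, which in split signature is Lorentzian on those leaves; so the relevant elliptic operator has to be produced by hand rather than read off as a Laplace--Beltrami operator.

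I would then run the maximum principle on the compact set $\overline{M^{c}}$. Suppose first $M^{c}\neq M$ and let $W$ be a connected component of $M^{c}$; then $\partial W\neq\emptyset$ as $M$ is connected, and on $\overline W$ the continuous function $\mathcal I$ is positive in $W$ and zero on $\partial W$, hence attains a positive maximum at an interior point. Since $W\cap M^{\circ}$ is dense in $W$ and $\mathcal I$ is harmonic (in the above sense) there, the strong maximum principle together with a continuity and connectedness argument forces $\mathcal I$ to be constant on $W$, hence $\mathcal I\equiv 0$ on $W$ by continuity onto $\partial W$ --- contradicting $\mathcal I>0$ on $W$. Therefore $M^{c}=M$, and the same maximum principle, now applied to the positive smooth function $\mathcal I$ on the closed manifold $M$ (harmonic on the dense set $M^{\circ}$), gives $\mathcal I\equiv c$ for a constant $c\neq 0$. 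Finally $\mathcal I\equiv c$ yields $\mathcal R^{2}=\mu_{2}-c^{2}$, where $\mathcal R=\tfrac12\mu_{1}$, so by Corollary \ref{cor:killing} both $\mathcal R$ (Hamiltonian of $\tfrac12 K_{1}$) and $\mathcal R^{2}$ (Hamiltonian of $X_{\mu_{2}-c^{2}}=K_{2}$) are Hamiltonian functions of Killing vector fields; Lemma \ref{lem:easy} then makes $\mathcal R$, and hence $\tr A=4\mathcal R$, constant, so $\Lambda=\tfrac14\grad_{g}(\tr A)=0$ and $\gnabla A=0$ by \eqref{eq:main}. This contradicts $A$ being non-parallel, which completes the argument; in particular $A$ is parallel.

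The step I expect to be the genuine obstacle is the globalisation carried out in the previous paragraph: passing from the chart-dependent harmonicity of $\mathcal I$ on $M^{c}\cap M^{\circ}$ to a statement to which the maximum principle on the compact closure $\overline{M^{c}}$ legitimately applies. The delicate points are producing a single elliptic operator with controlled coefficients that annihilates $\mathcal I$ on all of $M^{c}\cap M^{\circ}$ (the natural "flat'' structure lives on the leaves of $D$ and is only defined up to conformal change), and then ruling out that the maximum of $\mathcal I$ over $\overline{M^{c}}$ is trapped on the nowhere dense set $M^{c}\setminus M^{\circ}$, where the adapted coordinates and the distributions $D$ and $JD$ all degenerate.
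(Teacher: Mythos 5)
Your overall strategy (holomorphicity of $\rho$ in the complex Liouville picture, hence harmonicity of $\mathcal I$, hence a maximum principle on the compact manifold) is the same as the paper's, and your endgame is correct: if $\mathcal I\equiv c\neq 0$ then $\mathcal R=\tfrac12\mu_1$ and $\mathcal R^2=\mu_2-c^2$ are both Hamiltonians of Killing vector fields, so Lemma \ref{lem:easy} forces $\mathcal R$, hence $\tr A$, to be constant and \eqref{eq:main} gives $\gnabla A=0$. But the step you yourself flag as ``the genuine obstacle'' is a genuine gap, and it is not a removable technicality: the maximum of $\mathcal I$ is \emph{always} attained at a point of $M^{c}\setminus M^{\circ}$. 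Indeed, at a maximum point $p_0$ one has $d\mathcal I(p_0)=0$, hence $d\rho(p_0)=d\bar\rho(p_0)$; by Lemma \ref{lem:eigenvaluegradients} these gradients lie in the (distinct) eigenspaces of $A$ for $\rho$ and $\bar\rho$, so they can only agree if both vanish, i.e.\ $p_0$ is singular (this is exactly Lemma \ref{lem:p0sing} of the paper). Consequently your harmonicity statement, which lives only on $M^{c}\cap M^{\circ}$ where the complex Liouville coordinates exist, never covers a neighbourhood of the maximum point, and the implication ``harmonic on a dense open subset, continuous, interior maximum $\Rightarrow$ constant'' is false in general: the function $-|x|$ on $(-1,1)$ is harmonic off the single point $0$, continuous, and has an interior maximum at $0$ without being constant. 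So the strong maximum principle cannot be invoked as you propose without first extending the harmonicity across the singular set, and no amount of ``continuity and connectedness'' substitutes for that.

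The paper's proof is organised precisely around closing this gap. It first shows (Lemma \ref{+3}) that inside a convex neighbourhood $U(p_0)$ all singular points lie on a single geodesic $\gamma$ (because geodesics emanating from a singular point remain orthogonal to $K_1,K_2$), and then constructs an honest two-dimensional totally geodesic submanifold $N\ni p_0$ as the exponential image of a thin tubular neighbourhood of a segment in $\mathrm{span}\{K_1,K_2\}^{\perp}\subset T_pM$. On $N$ the restriction $A_N$ stays smooth and keeps two distinct complex-conjugate eigenvalues even at the singular points (the singularity there is $d\rho=0$, not eigenvalue collision, since $\mathcal I>0$ near $p_0$), so $J_N=\tfrac{1}{\mathcal I}\left(A_N-\mathcal R\,\Id\right)$ is a smooth complex structure on \emph{all} of $N$; the function $\rho|_N$ is holomorphic on the dense regular part and smooth on $N$, hence holomorphic on all of $N$ (Morera's theorem is invoked in the subcase where $\gamma$ consists entirely of singular points, Lemma \ref{+5}), and only then does the maximum principle legitimately apply at $p_0\in N$. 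To salvage your global argument on $\overline{M^{c}}$ you would need an analogous device --- an elliptic structure annihilating $\mathcal I$ that extends across $M^{c}\setminus M^{\circ}$ --- and that construction is exactly the content your proposal is missing.
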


Let $p_{0}\in M$ be a point such that $\mathcal{I}(p_{0})=\mathcal{I}_{\mbox{\tiny max}}=\mbox{max}\{\mathcal{I}(p):p\in M\}$. Such a point exists since $M$ is compact.

We will say that a point $p\in M$ is called \emph{regular},  if $p\in M^\circ$,  i.e.  $A$ has two distinct eigenvalues at this point and their differentials are not zero.  Otherwise $p$ will be called \emph{singular}. Note that the regular points form an open and dense subset of $M$.  This implies that only two cases are possible:

\begin{itemize}

 \item either $\rho$ and $\bar\rho$ are constant eigenvalues of $A$ on the whole $M$ and then, according to \eqref{eq:main}, $A$ is parallel,

 \item or $\rho(p)=\mathcal{R}(p)+i\mathcal{I}(p)$ is a smooth function in a sufficiently small  neighborhood $U(p_0)$ of $p_0$ with $d\rho\ne 0$ almost everywhere on $U(p_0)$.

 \end{itemize}

 We will show, by contradiction, that the second case is impossible and hence the statement follows.

The proof will be organised as follows. From now on, we consider a small geodesically convex neighborhood $U=U(p_0)$  where $\rho$ is smooth and $\mathcal{I}(p) > 0$ for all $p\in U$.
 Lemmas \ref{lem:p0sing} and \ref{+3}  show that
$p_0$ is a singular point and moreover all singular points with $d\rho = 0$ lie
on a  certain geodesic which we denote by $\gamma$. The case when $\gamma$ contains also regular points will be considered in Lemma \ref{+4}.  The case when all points of
$\gamma$ are singular  will be considered in Lemma \ref{+5}.  In the both cases we will come to a contradiction with the maximum principle for holomorphic functions.

\begin{lem}\label{lem:p0sing}
The point $p_0$ is singular.
\end{lem}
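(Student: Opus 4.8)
The plan is to argue by contradiction, assuming that $p_0$ is \emph{regular}, i.e.\ $p_0\in M^\circ$, so that $d\rho(p_0)\neq 0$; recall also that, by the choice of $U$, we have $\mathcal I(p_0)>0$, hence the eigenvalues of $A$ at $p_0$ form a pair of non-real complex conjugate numbers. First I would note that, by continuity, the eigenvalues of $A$ stay non-real complex conjugate on a whole neighbourhood of $p_0$, so neither the Liouville nor the degenerate case of Theorem~\ref{thm:normalform} can occur near $p_0$; since the proof of Theorem~\ref{thm:normalform} is valid at every point of $M^\circ$, we are necessarily in the complex Liouville case. Thus there exist local coordinates $z=x+\i y,s,t$ around $p_0$ in which $\rho$ is a \emph{holomorphic} function $\rho(z)$ of the single variable $z$, with $\mathcal I=\operatorname{Im}\rho(z)$.

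Next I would invoke the maximum principle. The restriction of $\mathcal I$ to the two-dimensional leaf $\{s=\mathrm{const},\, t=\mathrm{const}\}$ through $p_0$ equals $\operatorname{Im}\rho(z)$ and attains an interior local maximum at $z(p_0)$, since $\mathcal I(p_0)$ is the global maximum of $\mathcal I$ on $M$. A holomorphic function whose imaginary part has an interior local maximum is constant (the maximum principle for the harmonic function $\operatorname{Im}\rho$, or equivalently the open mapping theorem applied to $\rho$); hence $\rho(z)$ is constant near $z(p_0)$. As $\rho$ depends on $z$ alone in these coordinates, $\rho$ is then constant on a full neighbourhood of $p_0$ in $M$, so $d\rho(p_0)=0$, contradicting $p_0\in M^\circ$. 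Therefore $p_0$ is singular.

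The step I expect to be most delicate is the reduction to the complex Liouville normal form: one must check that Theorem~\ref{thm:normalform} genuinely applies at the point $p_0$ itself (it does, because $p_0\in M^\circ$ and the normal form was established there), and that in that normal form the eigenvalue $\rho$ depends only on the transverse coordinate $z=x+\i y$, so that constancy of $\rho$ along the leaf through $p_0$ already forces $d\rho(p_0)=0$. Once this structure is in place, the remainder is a routine application of the maximum principle.
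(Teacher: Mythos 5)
Your proof is correct, but it takes a genuinely different route from the paper's. The paper's argument is purely pointwise linear algebra: since $\mathcal{I}$ attains its maximum at $p_0$ we have $d\mathcal{I}(p_0)=0$, hence $d\rho=d\bar{\rho}$ at $p_0$; by Lemma~\ref{lem:eigenvaluegradients} the gradients $\grad\rho$ and $\grad\bar{\rho}$ lie in the $A$-eigenspaces for $\rho$ and $\bar{\rho}$ respectively, and since $\rho(p_0)\ne\bar{\rho}(p_0)$ these eigenspaces intersect trivially, forcing $\grad\rho(p_0)=\grad\bar{\rho}(p_0)=0$, i.e.\ $p_0$ is singular. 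You instead argue by contradiction, pull in the complex Liouville normal form of Theorem~\ref{thm:normalform} at the hypothetically regular point $p_0$, and apply the maximum principle to the harmonic function $\operatorname{Im}\rho(z)$. This is valid: the contradiction hypothesis $p_0\in M^\circ$ is exactly what licenses the normal form there, and the continuity argument ruling out the real and degenerate cases is sound (one minor point you could mention: the normal form's labelling of $\rho$ versus $\bar\rho$ might flip the sign of $\operatorname{Im}\rho$, but the maximum principle applies equally to interior minima of harmonic functions). The comparison is instructive: the paper's proof needs only that $p_0$ is a critical point of $\mathcal{I}$ and uses no classification results, which is why it is two lines; your proof front-loads the maximum-principle machinery that the paper deliberately postpones to Lemmas~\ref{+4} and~\ref{+5}, where it is unavoidable precisely because there the relevant points are singular and the complex structure must be extended onto a totally geodesic submanifold by hand rather than read off from the normal form. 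In that sense your argument handles cleanly only the branch where $p_0$ is regular, which is exactly the content of this lemma, but it would not generalise to the later steps, whereas the paper's eigenvector argument is the cheaper and more robust tool at this stage.
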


\begin{proof}
Since $d\mathcal{I}=0$ at $p_0$, we have $d\rho=d\bar{\rho}$.
 In view of  Lemma \ref{lem:eigenvaluegradients},  $\textrm{grad} (\rho)$ is an eigenvector of $A$ corresponding to $\rho$  and $\textrm{grad}(\bar{\rho})$  is an eigenvector of $A$ corresponding to $\bar \rho$ so they coincide if and only if  $d\rho=d\bar \rho=0$ at $p_0$, i.e., $p_0$ is singular.
\end{proof}

\begin{lem} \label{+3}
There exists a geodesic $\gamma:(a,b)\to  U$
such that all singular points of $U$ lie on $\gamma$.
\end{lem}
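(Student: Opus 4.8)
The plan is to identify the singular set of $U$ explicitly and then show it is thin enough to be swept out by one geodesic.

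\emph{Step 1: the singular set is the zero locus of a Killing field.} On $M^\circ\cap U$ the vector fields $V_1=\grad_g(\rho+\bar\rho)=2\grad_g\mathcal{R}$ and $V_2=\grad_g(\rho\bar\rho)$ span the rank-two distribution $D$ of Corollary \ref{cor:commutekilling}, so $\grad_g\mathcal{R}$ is nowhere zero on $M^\circ\cap U$. Since $\rho=\mathcal{R}+\i\mathcal{I}$ with $\mathcal{I}>0$ on $U$, a point of $U$ is singular exactly when $d\rho$ (equivalently $d\mathcal{R}$) vanishes there, and combining the two facts identifies the singular set of $U$ with the zero locus $Z=\{K_1=0\}$ of the Hamiltonian Killing field $K_1=J\grad_g(\rho+\bar\rho)$ of Corollary \ref{cor:killing}; it is moreover contained in $\{K_2=0\}$, where $K_2=J\grad_g(\rho\bar\rho)$, because $d\rho=0$ forces $d(\rho\bar\rho)=0$. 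Hence $Z$ is a closed, totally geodesic submanifold of $U$, each connected component of which has even codimension: a point or a $2$-dimensional totally geodesic surface.

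\emph{Step 2: there is no $2$-dimensional component.} Suppose some component $S$ of $Z$ were a surface. On $S$ both $\grad_g\mathcal{R}$ and $\grad_g(\mathcal{R}^2+\mathcal{I}^2)$ vanish, hence (using $\mathcal{I}>0$) so does $\grad_g\mathcal{I}$; thus $\rho\equiv\rho_0$ is constant on $S$, and $\Lambda=\tfrac14\grad_g\tr A=\grad_g\mathcal{R}$ vanishes on $S$, so by \eqref{eq:main} $\nabla A$ vanishes at every point of $S$. The leaves of $D$ accumulate onto $S$ from $M^\circ$, and on each such leaf $\rho$ is the complex coordinate of a Riemann surface (cf.\ the proof of Theorem \ref{thm:normalform}), so $\rho$ takes the boundary value $\rho_0$ along them; since $\mathcal{I}(p_0)=\mathcal{I}_{\max}$ is the global maximum of $\mathcal{I}$ and $\mathcal I$ is harmonic on the leaves, pushing this boundary value through the leaves accumulating at $S$ contradicts the maximum principle for $\rho$. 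I expect this to be the one genuinely delicate point: the leaves are Lorentzian, so the argument must be run either through the ambient behaviour of the map $\rho\colon U\to\C$ near its critical fibre $\rho^{-1}(\rho_0)\supseteq S$, or through a holomorphic function built from $\rho$ on the (artificially complex) leaves; the extremality of $\mathcal{I}$ at $p_0$ is what makes it bite. The upshot is that $Z$ is discrete.

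\emph{Step 3: producing $\gamma$.} After shrinking $U$ we may assume $Z$ is connected, hence $Z=\{p_0\}$, and any geodesic $\gamma$ through $p_0$ works. More invariantly, the isolated singular points all lie in the closure of a single leaf of $D$, and following the two null foliations of that leaf—whose leaves are null geodesics of $M$ because the leaf is totally geodesic—one sees that they lie on one geodesic issuing from $p_0$. The main obstacle throughout is Step 2: if a $2$-dimensional singular surface could occur the statement would fail outright, so the substance of the proof is to forbid it, and that is where I would concentrate the work.
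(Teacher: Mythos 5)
Your proposal has a genuine gap at exactly the place you flag yourself: Step 2 is the entire content of the lemma on your route, and it is not proved. Having identified the singular set with the zero locus of the Killing field $K_1$ (which is correct: on $U$ one has $\rho\neq\bar\rho$, and $d\mathcal{R}=0$ forces $d\mathcal{I}=0$ by Lemma \ref{lem:eigenvaluegradients}, so singular $\Leftrightarrow K_1=0$; and the even-codimension statement survives in indefinite signature because $g\,\nabla K_1$ is antisymmetric, hence of even rank), you are left with two possibilities: a discrete set or a $2$-dimensional totally geodesic component $S$. If $S$ occurs, the conclusion of the lemma is false, so everything hinges on forbidding it — and your argument for that is a sketch resting on unestablished claims: that leaves of $D$ ``accumulate onto $S$'' in a way that lets you read off boundary values of $\rho$, that the maximum principle applies on these leaves (which carry a split-signature metric, so $\rho$ is not a priori a holomorphic function of a genuine complex coordinate until the normal form of Theorem \ref{thm:normalform} is in force \emph{and} a complex structure on the leaf is specified), and that the extremality of $\mathcal{I}$ at $p_0$ bites even though $p_0$ need not lie on $S$. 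None of this is carried out. In addition, Step 3 in the discrete case does not by itself place all singular points of the \emph{given} $U$ on one geodesic (a discrete set need not be collinear); you patch this by shrinking $U$, which is tolerable for the downstream argument but should be said explicitly, and the ``more invariant'' alternative via null foliations is unjustified.

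The paper's own proof is much shorter and, notably, disposes of your problematic $2$-dimensional case for free. It argues: if three singular points $p_0,p_1,p_2\in U$ do not lie on a common geodesic, join each to a generic regular point $q\in U$ by a geodesic $\gamma_i$ with $\gamma_i(0)=p_i$, $\gamma_i(1)=q$. The functions $g(K_j(\gamma_i(t)),\dot\gamma_i(t))$ are constant along geodesics and vanish at $t=0$ because $K_1=K_2=0$ at singular points; hence the three (generically linearly independent) vectors $\dot\gamma_i(1)$ all lie in the $2$-dimensional subspace $\mathrm{span}\{K_1,K_2\}^{\perp}\subset T_qM$ — a contradiction. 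This is exactly the tool you need to kill a $2$-dimensional component $S$ (it contains three non-collinear points), and I would recommend replacing your Step 2 by this conserved-quantity argument; as written, your proposal does not establish the lemma. Your Step 1 observation that the singular set has even codimension is a nice supplementary remark (it would even make Lemma \ref{+5} redundant), but it cannot carry the proof on its own.
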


\begin{proof}
If  there are at most two singular points lying in $U$, there is nothing to prove.  Suppose there exist three singular points   $p_0,p_2,p_2\in U$   that do not lie on a geodesic contained in $U$. Then, for a point $q\in U$,
denote by $\gamma_i$, $i=0,1,2$,
 the geodesics lying in $U$ and connecting $p_i$ and $q$; we assume $\gamma_i(0)= p_i$  and $\gamma_i(1)= q$. For a generic point  $q\in U$, the velocity vectors $\dot\gamma_i(1)\in T_qM$ are linearly independent. On the other hand, they are  orthogonal to the Killing vector fields $K_1, K_2$. Indeed, the functions
 $$
 t\longmapsto g(K_1(\gamma_i(t)),\dot{\gamma}_i(t)) \quad \textrm{and} \quad  t\longmapsto g(K_2(\gamma_i(t)),\dot{\gamma}_i(t))
 $$
 are constant on the geodesics and vanish at $t=0$ since at this point $K_1= K_2=0$.
 We may assume that the point $q$ is regular (otherwise replace it by a regular point from a very small neighborhood).  Then, the linearly independent vectors $\dot\gamma_i \in T_q M$ are contained in the two-dimensional subspace given by the orthogonal complement to $\mathrm{span}\{K_1,K_2\}$.
  This gives us  a contradiction and the claim follows.
\end{proof}

The proof of Theorem \ref{thm:closedcplxconjugate} follows now from the next two lemmas below.

\begin{lem} \label{+4}
Consider the geodesic segment $\gamma:(a, b)\to U$  containing all singular points of $U$. Then $\gamma$ contains no regular points.
\end{lem}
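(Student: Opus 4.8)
The plan is to argue by contradiction: suppose $\gamma$ contains a regular point $q$, and produce a contradiction with the maximum principle for holomorphic functions on a Riemann surface.

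First I would use the Killing fields $K_1=X_{\mu_1}$, $K_2=X_{\mu_2}$ with $\mu_1=\rho+\bar\rho$, $\mu_2=\rho\bar\rho$ (Corollary~\ref{cor:commutekilling}). At a singular point of $U$ one has $d\rho=0$, hence $d\mu_1=d\mu_2=0$, hence $K_1,K_2$ vanish there; in particular $K_1,K_2$ vanish at $p_0\in\gamma$. Since the $K_i$ are Killing, $\tau\mapsto g(K_i(\gamma(\tau)),\dot\gamma(\tau))$ is constant along the geodesic $\gamma$ and vanishes at the parameter of $p_0$, so $\dot\gamma\perp\operatorname{span}\{K_1,K_2\}=JD$ along all of $\gamma$; thus $\dot\gamma\in D$ at every regular point of $\gamma$. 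Because the leaves of $D$ are totally geodesic (Corollary~\ref{cor:commutekilling}), every maximal regular sub-arc of $\gamma$ lies inside a single leaf $\mathcal L$ of $D$. Since $p_0\in\gamma$ is singular while the regular points of $\gamma$ form a non-empty open set, I may choose such a maximal sub-arc $\gamma|_{(\tau_1,\tau_2)}\subset\mathcal L$ with $p_1:=\gamma(\tau_1)$ a singular point of $U$.

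Next I would put a complex structure on the leaf. A leaf cannot meet $M^{\mathrm{sing}}$, so $\mathcal L\subset M^\circ\cap M^{c}$; then $A_{\mathcal L}$ has no real eigenvalue and $j_{\mathcal L}:=\tfrac1{\mathcal I}\bigl(A_{\mathcal L}-\mathcal R\,\Id\bigr)$ (where $\rho=\mathcal R+\mathrm{i}\,\mathcal I$) satisfies $j_{\mathcal L}^2=-\Id$, hence is a complex structure on the surface $\mathcal L$ — in a complex-Liouville chart \eqref{eq:normalform2} it is the standard structure of $z$, and $\rho|_{\mathcal L}$ is $j_{\mathcal L}$-holomorphic with induced metric $g_{\mathcal L}=\tfrac14(\bar\rho-\rho)(dz^2-d\bar z^2)$. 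On $\mathcal L$ we have $\mathcal I>0$ (as $\mathcal L\subset U$) and $d\rho\neq0$ (as $\mathcal L\subset M^\circ$), so $\mathcal I=\operatorname{Im}\rho|_{\mathcal L}$ is a non-constant harmonic function. Moreover $g_{\mathcal L}$ is non-degenerate wherever $\mathcal I\neq0$, hence extends smoothly across the point $z_1$ of the $z$-plane to which $\gamma$ tends as $\tau\to\tau_1^+$; since $d\rho(\gamma(\tau))\to d\rho(p_1)=0$ we get $\rho'(z_1)=0$, so near $z_1$ the non-constant holomorphic map $\rho$ is a branched cover of some degree $k\ge2$. In particular $\mathcal I=\operatorname{Im}\rho$ takes values both strictly above and strictly below $\mathcal I(p_1)$ at points of $\mathcal L$ arbitrarily close to $p_1$.

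The hard part will be to convert this into a contradiction with the maximum principle, and it is here that the topology of the singular set on $\gamma$ and of the leaves of $D$ must be controlled. The cleanest route is to show that (an appropriate part of) $\mathcal L$, together with the finitely many branch points on its boundary, is a \emph{closed} Riemann surface $\widehat{\mathcal L}$ to which $\rho$ extends holomorphically; a non-constant holomorphic function on a closed Riemann surface cannot exist, contradiction. Failing compactness, one argues directly: after shrinking $U$ so that $\overline{\mathcal L}\subset U$, the non-constant harmonic function $\mathcal I$ attains its maximum over the compact $\overline{\mathcal L}$ at a boundary point $p_*\in\overline{\mathcal L}\setminus\mathcal L$ (maximum principle), which is a singular point with $\mathcal I(p_*)>0$ and $d\rho(p_*)=0$; but then the branched-cover description at $p_*$ exhibits points of $\mathcal L$ arbitrarily close to $p_*$ with $\mathcal I>\mathcal I(p_*)$, contradicting maximality. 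Either way, $\gamma$ cannot contain a regular point.
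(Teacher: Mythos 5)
Your overall strategy --- produce a totally geodesic $2$-surface tangent to $D$, endow it with the complex structure $\tfrac{1}{\mathcal I}(A-\mathcal R\,\Id)$, observe that $\rho$ is holomorphic there, and contradict the maximum principle for the harmonic function $\mathcal I$ --- is exactly the paper's. But there is a genuine gap at the decisive step. Your surface is a leaf $\mathcal L$ of $D$, which by construction lives entirely in the regular set; the singular point $p_1=\gamma(\tau_1)$ is only a limit point of $\mathcal L$, and the complex-Liouville chart $z$ (hence the claim that ``$g_{\mathcal L}$ extends smoothly across $z_1$'' and the branched-cover picture $\rho'(z_1)=0$) is only defined where $d\rho\neq 0$. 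Whether the complex structure and the holomorphic function $\rho$ extend across $p_1$ is precisely the difficulty, and you assert it rather than prove it. The paper sidesteps this by building the surface differently: $N$ is the image under $\exp_p$ of a thin tubular neighbourhood of the segment joining $0$ to $-\dot\gamma(1)$ inside $\mathrm{span}\{K_1,K_2\}^{\perp}\subset T_pM$. This $N$ is a smooth totally geodesic surface containing both regular and singular points of $\gamma$; the restriction $A_N$ and hence $J_N$ are smooth on all of $N$, so holomorphicity of $\rho$ on the dense open regular part $N'$ extends to all of $N$ by smoothness, with no branched covers needed.

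The second, independent, gap is in how you reach the contradiction. Your fallback argument takes the maximum of $\mathcal I$ over $\overline{\mathcal L}$ and claims it is attained at a singular boundary point. But you cannot ``shrink $U$ so that $\overline{\mathcal L}\subset U$'': the leaf is a leaf of the foliation of (the regular part of) $U$, so shrinking $U$ shrinks $\mathcal L$ as well, and the closure of the leaf always reaches $\partial U$, where the maximum may well be attained and where there is no reason for $d\rho$ to vanish. (The ``closed Riemann surface'' alternative is of course unavailable in this purely local situation.) The paper instead exploits the point $p_0$ at which $\mathcal I$ attains its maximum over the whole compact manifold $M$: the segment joining $0$ to $-\dot\gamma(1)$ exponentiates onto the geodesic arc from $p$ back to $p_0$, so $p_0$ is an \emph{interior} point of $N$, the interior maximum principle forces $\mathcal I$ (hence $\rho$) to be constant on $N$, and independence of $\rho$ of the directions orthogonal to $N$ (Corollaries \ref{cor:killing} and \ref{cor:commutekilling}) then contradicts $d\rho\neq 0$ almost everywhere on $U$. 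You never bring $p_0$ into the final contradiction, and without it the argument does not close.
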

\begin{figure}
  \includegraphics[scale=1.4]{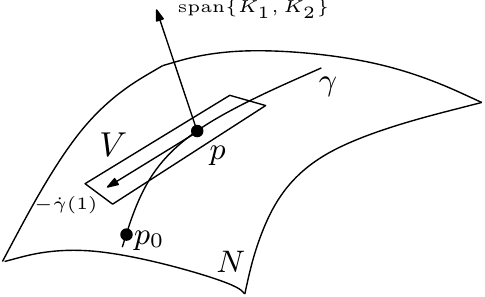}
  \caption{The construction of the submanifold $N$.}\label{1}
\end{figure}

\begin{proof}
Let $\gamma((a,b))$ be the geodesic line segment containing all singular points and $\gamma(0)= p_0$. Assume, by contradiction, that $p=\gamma(1)$ is regular.
Consider $T_pM$ and  the two-dimensional subspace $\mathrm{span}\{K_1,K_2\}^\perp\subset T_pM$.
The straight line  segment connecting the vectors  $0\in T_pM$ and   $-\dot \gamma(1) \in  T_pM$ lies in  $\mathrm{span}\{K_1,K_2\}^\perp$ since as we have explained in the proof of Lemma
\ref{+3}  the geodesic $\gamma$ is orthogonal to the Killing vector fields at every point.

We  denote by $V$ a thin tubular neighborhood of  this segment in $\mathrm{span}\{K_1,K_2\}^\perp\subset T_pM$, see figure \ref{1}. If the neighborhood is sufficiently thin,   the exponential mapping is well defined on $V$ and is an embedding of $V$ into $U$, so its image, which we denote by $N$, is a two-dimensional submanifold.

Notice that all the points $q\in N$ which do not belong to $\gamma$ are regular. On the other hand we know from the proof of Theorem \ref{thm:normalform}  that on $U\setminus\{\gamma((a,b))\}$ there is an integrable two-dimensional distribution defined by the subspaces $\mathrm{span}\{K_1,K_2\}^\perp$ whose leaves are totally geodesic. Hence every geodesic starting at $p$ in the direction of $V$ belongs to the leaf of this distribution through $p$ if it does not leave the set of regular points. In other words, the subset
$$
N'=\{\mathrm{exp}_p(X):X\in V, \mathrm{exp}_p(tX)\mbox{ regular for all }t\in [0,1]\}
$$
of $N$ is contained as an open subset in the totally geodesic integral leaf of the distribution $\mathrm{span}\{K_1,K_2\}$ through the point $p$ and it is open and dense in $N$, since $\dot{\gamma}(1)$ is the only direction in $V$ in which geodesics starting initially in this direction can meet singular points. This implies that $N$ is a totally geodesic submanifold since this condition holds in a
neighborhood of almost every point.

Furthermore, for all points $q\in N'$ the tangent space $T_qN$ coincides with $\mathrm{span}\{K_1,K_2\}^\perp$. Therefore $T_qN$  is $A$-invariant and the eigenvalues of the restriction $A_N=A|_{T_qN}$ are $\rho$ and $\bar \rho$ with multiplicity one.
By continuity, $T_qN$ is still $A$-invariant even for $q\in N\setminus N'$ and $\rho, \bar\rho$ are still the eigenvalues of the restriction $A_N$.

Thus, we see that at each point $q\in N$,  the operator $A_N$ is conjugate to the matrix $\begin{pmatrix} \mathcal R & \mathcal I \\ -\mathcal I & \mathcal R\end{pmatrix}$ where $\rho=\mathcal{R}+\mathrm{i}\,\mathcal{I}$. This allows us to introduce a natural complex structure $J_N$ on $N$ by setting $J_N = \frac{1}{\mathcal I}(A_N - \mathcal R\cdot \mathrm{Id})$.  Alternatively, we can define $J_N$ by noticing that (up to the sign) $J_N$ is the only complex structure on $N$ that commutes with $A_N$.

It is straightforward to see that at each regular point $q\in N'$ this complex structure coincides with the one given by the local classification of Theorem \ref{thm:normalform}. In particular, $\rho:N\rightarrow \mathbb{C}$ is a holomorphic function on $N'$ with respect to $J_N$. Since $\rho$ is a smooth function on $U$ and hence on $N$, we obtain that $\rho$ is holomorphic on the whole of $N$. This implies that
$\mathcal{I}:N\rightarrow \mathbb{R}$ is a harmonic function that attains its maximal value at $p_0\in N$.
Hence, by the maximum principle, $\mathcal{I}$ and therefore $\rho$ are constant on $N$. On the other hand, $\rho$ does not depend on the direction orthogonal to $N$ (see Corollaries \ref{cor:killing} and \ref{cor:commutekilling}). Thus, we obtain that $\rho$ is constant on an open neighborhood of $p_0$ in $M$, which contradicts to our assumption that $d\rho\ne 0$ almost everywhere on $U(p_0)$. This proves the Lemma.
\end{proof}

The next lemma excludes the other possibility.

\begin{lem} \label{+5}
Consider the geodesic segment $\gamma:(a, b)\to U$  containing all singular points of $U$. Then $\gamma$ cannot consist of singular points only.
\end{lem}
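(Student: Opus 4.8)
The plan is to observe that, under the hypothesis, the Hamiltonian Killing vector fields $K_1=X_{\mu_1}$ and $K_2=X_{\mu_2}$ of Corollary \ref{cor:commutekilling} must vanish identically along $\gamma$, and then to reach a contradiction by analysing their zero sets near $p_0$.

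First I would record some preliminary observations. On $U$ the eigenvalues of $A$ are distinct everywhere (as $\mathcal{I}>0$), so a point $q\in U$ is singular precisely when $d\rho(q)=0$; at such a point $d\mu_1=d(\rho+\bar\rho)=0$ and $d\mu_2=d(\rho\bar\rho)=0$, hence $K_1$ and $K_2$ vanish there. Thus, assuming for contradiction that every point of $\gamma$ is singular, $K_1$ and $K_2$ vanish along the whole of $\gamma$. Differentiating the identity $K_1(\gamma(t))\equiv 0$ along $\gamma$ gives $\gnabla_{\dot\gamma}K_1=0$ on $\gamma$; since $K_1$ is holomorphic (Corollary \ref{cor:commutekilling}), the endomorphism $X\mapsto\gnabla_X K_1$ commutes with $J$, so $J\dot\gamma$ also lies in its kernel along $\gamma$. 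Hence $\ker(\gnabla K_1)$ contains the $J$-invariant $2$-plane $\mathrm{span}\{\dot\gamma,J\dot\gamma\}$ at every point of $\gamma$, and the same holds for $K_2$.

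Now I would distinguish three cases. (i) If $K_1\equiv 0$ on $M$, then $J\grad_g(\tr A)=2K_1=0$, so $\tr A$ is constant, hence $\Lambda=\tfrac14\grad_g(\tr A)=0$, and \eqref{eq:main} forces $A$ to be parallel, contradicting that $d\rho\neq 0$ almost everywhere on $U$. (ii) If $K_2\equiv 0$ on $M$, then $\mu_2=\rho\bar\rho=|\rho|^2$ is constant; but the restriction of $\rho$ to any leaf of the distribution $D$ (such leaves exist, since regular points are dense) is, by the complex Liouville normal form of Theorem \ref{thm:normalform}, a non-constant holomorphic function with respect to the natural complex structure on the leaf, and a holomorphic function of constant modulus is constant — a contradiction. (iii) If neither $K_1$ nor $K_2$ vanishes identically, then near $p_0$ the zero set of $K_i$ equals $\mathrm{exp}_{p_0}\big(\ker(\gnabla K_i)_{p_0}\cap B_\varepsilon\big)$ and is a totally geodesic submanifold of even codimension; since it contains the curve $\gamma$ its dimension is at least $2$, while it is not all of $M$, so its codimension equals $2$ and $\ker(\gnabla K_i)_{p_0}=\mathrm{span}\{\dot\gamma(0),J\dot\gamma(0)\}$ for $i=1,2$. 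Consequently both zero sets coincide, near $p_0$, with the single totally geodesic surface $N=\mathrm{exp}_{p_0}\big(\mathrm{span}\{\dot\gamma(0),J\dot\gamma(0)\}\cap B_\varepsilon\big)$. On $N$ we have $K_1=K_2=0$, hence $d\mu_1=d\mu_2=0$, which gives $d\mathcal{R}=0$ and, using $\mathcal{I}>0$, also $d\mathcal{I}=0$, so $d\rho=0$ everywhere on $N$. Thus every point of $N$ is singular, which by Lemma \ref{+3} forces $N\subseteq\gamma$ — impossible, since $N$ is a surface and $\gamma$ is a curve.

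In each of the three cases we obtain a contradiction, which proves Lemma \ref{+5}. Together with Lemma \ref{+4}, this excludes the second of the two alternatives listed above, so $\rho$ and $\bar\rho$ are constant eigenvalues of $A$ on $M$ and, by \eqref{eq:main}, $A$ is parallel, completing the proof of Theorem \ref{thm:closedcplxconjugate}. The step I expect to be the main obstacle is case (iii): it relies on the structure theorem for zero sets of Killing fields (that a neighbourhood of a zero is the exponential image of the kernel of the linearisation and is totally geodesic of even codimension), combined with the bookkeeping showing that the rank‑$2$ situation forces the two zero sets to have the same tangent plane along $\gamma$, hence to coincide.
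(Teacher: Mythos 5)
Your proof is correct, but it takes a genuinely different route from the paper's. The paper stays with the eigenvalue function itself: it builds a totally geodesic surface $N$ containing $\gamma$, smoothly extends the complex structure $J_N=\tfrac{1}{\mathcal I}(A_N-\mathcal R\cdot\mathrm{Id})$ across $\gamma$ to the other component, notes that $\rho$ is $J_N$-holomorphic on the component $N'$ and constant on its boundary curve $\gamma$, and invokes Morera's theorem (a reflection/removable-singularity argument) to conclude that $\rho$ is constant on $N'$, hence on an open subset of $M$ --- the same contradiction as in Lemma~\ref{+4}. You instead linearise the Hamiltonian Killing fields at their common zeros along $\gamma$: since $K_i$ vanishes along $\gamma$, one gets $\gnabla_{\dot\gamma}K_i=0$, and since $\gnabla K_i$ commutes with $J$, its kernel contains the $J$-invariant plane $\mathrm{span}\{\dot\gamma,J\dot\gamma\}$; in your case (iii) the zero set of $K_i$ near $p_0$ is therefore the totally geodesic surface $\exp_{p_0}\bigl(\mathrm{span}\{\dot\gamma(0),J\dot\gamma(0)\}\cap B_\varepsilon\bigr)$, which consists entirely of singular points and so contradicts Lemma~\ref{+3}. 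This neatly avoids the paper's somewhat delicate extension of $J_N$ across $\gamma$ and the appeal to Morera; the price is that you import the structure theorem for zero sets of Killing fields (that near a zero $p$ the zero set equals $\exp_p(\ker(\gnabla K)_p)$ and is totally geodesic of even codimension), which the paper never states but whose standard proof via $\phi_t\circ\exp_p=\exp_p\circ\, d\phi_t|_p$ does go through verbatim in arbitrary signature, and that you must dispose of the degenerate alternatives $K_i\equiv 0$ separately. Your cases (i) and (ii) are handled correctly, though they can be shortened: if either $K_1$ or $K_2$ vanishes identically then $V_1,V_2$ fail to be linearly independent anywhere, so $M^\circ=\emptyset$ by Lemma~\ref{lem:eigenvaluegradients}, contradicting Lemma~\ref{lem:propM0}. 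A cosmetic point: ``the zero set contains the curve $\gamma$'' by itself only gives dimension at least one; it is the even-codimension statement (or, directly, the inclusion of the $J$-invariant $2$-plane in the kernel) that forces dimension two, as you in effect use.
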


 \begin{figure}
    \includegraphics[scale=1.25]{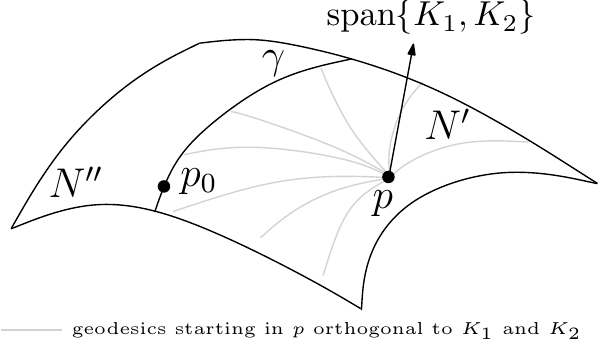}
 \caption{All points on $\gamma$ are singular.}\label{2}
        \end{figure}

\begin{proof}
By contradiction, assume that all points of $\gamma$ are singular.
Since the differential of $\rho$ vanishes at each singular point, $\rho$ is constant along $\gamma$. Let $p\in U$ be a regular point and let $V\subseteq T_p M$ be a star-like open neighborhood of $0$ such that it is mapped diffeomorphically onto $U$ by the exponential mapping. We define the two-dimensional submanifold $N$ to be the image of $V\cap \mathrm{span}\{K_1,K_2\}^\perp$ under the exponential mapping. The curve $\gamma$ lies on $N$ and divides it into two connected components, see figure \ref{2}. We denote by $N'$ the component containing $p$ joint with $\gamma((a,b))$. Consider the complex structure on $N'$ constructed similar as in the proof of Lemma \ref{+4} and smoothly extend it from $N'$ through the boundary $\gamma$ to the domain $N''$ so that, as the result, we obtain a complex structure on $N$. {More precisely, we have the field of operators $A_N=A|_{TN}$  defined on $N'$ as in Lemma \ref{+4}. This field $A_N$ is smooth on $N'$ including its boundary $\gamma$. We can smoothly extend this field from $N'$ to $N''$  in such a way that $A_N$ still has two distinct eigenvalues at every point, and then apply the formula $J_N = \frac{1}{\mathcal I}(A_N - \mathcal R\cdot \mathrm{Id})$ from Lemma \ref{+4} to define the complex structure.}

The function $\rho$ (restricted to $N'$) is holomorphic with respect to this complex structure and is constant on $\gamma$. Then, it is constant by Morera's theorem \cite{Morera} on $N'$. To come to a contradiction, we can now proceed completely analogous to the proceeding lemma and the claim follows.
\end{proof}

\subsection{The Liouville and degenerate cases}

It is our goal to prove the Yano-Obata conjecture Theorem \ref{YOconjecture}, that is, we want to show that the existence of an essential c-projective vector field $v$ on a closed connected pseudo-Riemannian K\"ahler surface $(M,g_0,J)$ implies that $g_0$ has constant holomorphic sectional curvature.

Having the normal forms of Theorem \ref{thm:normalformwithv} and Theorem \ref{thm:allmetricsintheclass} at hand, the proof of Theorem \ref{YOconjecture} is basically an exercise using your favourite computer algebra program. We will proceed by calculating the scalar curvature of the metrics $\hat{g}$, given by the formulas \eqref{eq:ghat1} and \eqref{eq:ghat3} in the Liouville case and degenerate case respectively. For certain values of the parameters $c,C$ in the formulas \eqref{eq:ghat1} and \eqref{eq:ghat3}, the metric $\hat g$ is the initial metric $g_0$. The condition that the restriction $\mathrm{Scal}(\gamma(\tau))$ of the scalar curvature to an integral curve $\gamma$ of the c-projective vector field $v$ cannot explode when $\tau$ approaches infinity, will provide restrictions on the parameters appearing in Theorem \ref{thm:normalformwithv}.

We will show that for the parameters such that the scalar curvature is bounded, the corresponding metrics have constant holomorphic sectional curvature. {Note that we do not calculate the scalar curvature of the metrics $g$ from Theorem \ref{thm:normalformwithv}, since for these metrics it follows immediately that the restriction $\rho(\gamma(\tau))$ of the nonconstant eigenvalue $\rho$ of the tensor $A\in \mathcal{A}(g,J)$ (given by the formulas from Theorem \ref{thm:normalform}) explodes if $\tau$ approaches infinity. These metrics cannot be globally defined and we work with the generic representatives \eqref{eq:ghat1} and \eqref{eq:ghat3} of the c-projective class of $g_0$ instead.}

Let us explain, that it is sufficient to consider the cases L2 and D2 in Theorem \ref{thm:normalformwithv}. Below we use the results and notation that have been introduced in the beginning of Section \ref{sec:solvinglie}. Let $v$ be an essential c-projective vector field. Without loss of generality, we can assume that $d(g,J)=2$ and thus, can introduce $h=h_g$ and $\hat{h}=-\mathcal{L}_v h$ as a basis in $\mathcal{S}([\gnabla],J)$ so that the matrix of $\mathcal{L}_v:\mathcal{S}([\gnabla],J)\rightarrow \mathcal{S}([\gnabla],J)$ in this basis becomes
\begin{align}
\mathcal{L}_v=\left(\begin{array}{cc}0&\alpha\\-1&\beta\end{array}\right)\label{eq:matrixofessentialv}
\end{align}
for certain constants $\alpha,\beta$. By Lemma \ref{key} and the equation \eqref{eq:LieA}, the eigenvalues $\rho$ of $A=\hat{h}h^{-1}$ satisfy $v(\rho)=\rho^2+\beta \rho +\alpha$ on the dense and open subset where they are smooth. In particular,  the restrictions $\rho(\tau)=\rho(\gamma(\tau))$ of the eigenvalues of $A$ to the integral curves $\gamma$ of $v$ satisfy the equation
\begin{align}
\frac{d\rho}{d\tau}=\rho^2+\beta \rho+\alpha.\label{eq:rhoalongv}
\end{align}

The behaviour of solutions of \eqref{eq:rhoalongv} depends on the constants $\alpha$ and $\beta$, more precisely, on the roots $\lambda_1, \lambda_2$ of the quadratic equation $\rho^2 + \beta \rho + \alpha=0$. On the other hand,  different types of the roots correspond to different cases in the classification Theorem \ref{thm:normalformwithv}. These relationships can be summarised as follows (in view of \S\ref{sect5} we do not include the complex Liouville case):
\begin{itemize}

\item $\lambda_1=\lambda_2=0$:  cases L1 and D1 in Theorem \ref{thm:normalformwithv}. The non-constant solutions of \eqref{eq:rhoalongv} are all unbounded,

\item $\lambda_1=\lambda_2\ne 0$:  cases L3 and D3 in Theorem \ref{thm:normalformwithv}. The non-constant solutions of \eqref{eq:rhoalongv} are all unbounded,

\item $\lambda_1,\lambda_2 \in \R$ and $\lambda_1\ne\lambda_2$: cases L2 and D2 in Theorem \ref{thm:normalformwithv}. There exist non-constant bounded solutions of \eqref{eq:rhoalongv} of the following form
\begin{align}
\rho(\tau)=-\tfrac{\beta}{2}-\sqrt{c}\,\mathrm{tanh}(\sqrt{c}(\tau+d)),\label{eq:rhot}
\end{align}
where $c=\tfrac{\beta^2}{4}-\alpha$ is necessarily positive and $d$ is some integration constant,

\item $\lambda_1$ and $\lambda_2$ are complex conjugate and $\lambda_1\ne\lambda_2$: case L4 in Theorem \ref{thm:normalformwithv}. The solutions of \eqref{eq:rhoalongv} are all unbounded.
\end{itemize}

This implies, that we can restrict to the cases L2 and D2 since only in these two cases the (non-constant) eigenvalues of $A$ might be bounded.

As mentioned in the beginning of this section, we will evaluate the scalar curvature of the metrics $\eqref{eq:ghat1}$ and $ \eqref{eq:ghat3}$ by using the coordinates provided in the cases L2 and D2 from Theorem \ref{thm:normalformwithv} respectively. These coordinates can be introduced in a neighborhood of every point of $M^\circ,$ the dense and open subset of $M$, where the eigenvalues $\rho,\sigma$ of $A$ satisfy $\rho\neq \sigma$ and $d\rho\neq0,d\sigma\neq 0$ unless they are constant.

To evaluate the behaviour of $\mathrm{Scal}(\gamma(\tau))$ along the integral curves $\gamma$ of $v$ by using the coordinates in Theorem \ref{thm:normalformwithv}, we have to show that these integral curves do not leave $M^\circ$.

\begin{lem}
The flow $\Phi^v_\tau$ of $v$ leaves $M^\circ$ invariant.
\end{lem}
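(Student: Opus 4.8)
The plan is to show that $M^\circ$ is invariant under the flow $\Phi^v_\tau$ of the c-projective vector field $v$ by analysing separately what can go wrong: a point of $M^\circ$ could flow either to a point where $\rho=\sigma$ (i.e.\ to $M^{\mathrm{sing}}$) or to a point where one of the non-constant eigenvalues has vanishing differential (i.e.\ out of $M^\circ$ but still in $M'$). First I would recall from Lemma \ref{key} that $\mathcal L_v A=-\delta A^2+(\beta-\gamma)A+\alpha\,\mathrm{Id}$, so that the flow of $v$ acts on $\mathcal A(g,J)$ (or rather, $(\Phi^v_\tau)^*$ carries $A$ into the pencil spanned by $A$ and $\mathrm{Id}$), and hence sends eigenvalues of $A$ to eigenvalues of the transformed endomorphism by the fractional-linear rule of Lemma \ref{lem:aboutL}; in particular $\Phi^v_\tau$ preserves the \emph{order} of $A$ and maps $M'$ to $M'$ and the condition $\rho\neq\sigma$ to itself (cf.\ Corollary \ref{cor:bols1}, which asserts precisely that $M'$, $M^\circ$ and the distributions $D^*,D,JD$ are c-projectively invariant — although that corollary is stated for passing between metrics $g,g'\in[g]$, the same argument applies to the diffeomorphisms $\Phi^v_\tau$, since $(\Phi^v_\tau)^*g$ is c-projectively equivalent to $g$).

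The cleanest way to organise this is as follows. Fix $p\in M^\circ$ and let $\tau\mapsto\gamma(\tau)$ be the integral curve of $v$ through $p$, defined on a maximal interval. Along $\gamma$ the eigenvalue $\rho$ satisfies the Riccati equation \eqref{eq:rhoalongv}, $\frac{d\rho}{d\tau}=\rho^2+\beta\rho+\alpha$ (and likewise $\sigma$, in the order-two case, with the same constants). Since the restriction $\mu_1=\rho+\sigma$ of $\tfrac14\tr A$ and $\mu_2=\rho\sigma$ are Hamiltonians of Killing fields $K_1,K_2$, and $v$ is c-projective, one can compute how $K_1,K_2$ interact with $v$; concretely, from Corollary \ref{cor:commutekilling} the distributions $D=\mathrm{span}\{V_1,V_2\}$ and $JD$ are c-projectively invariant (Corollary \ref{cor:bols1}), so $v$ preserves $D$ and $JD$ along $\gamma$. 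The key point is that $d\rho$ cannot vanish at any point of $\gamma$: if $d\rho(\gamma(\tau_0))=0$ for some $\tau_0$ in the maximal interval, then since $d\rho$ spans (a sub-line of) $D^*$, which is $v$-invariant, and since the Riccati flow \eqref{eq:rhoalongv} is an honest ODE with unique solutions, the function $\rho$ would be forced to be constant in an open neighbourhood of $\gamma(\tau_0)$ — here one uses Lemma \ref{lem:eigenvaluegradients}, that $\mathrm{grad}_g\rho$ lies in the $\rho$-eigenspace, together with the fact that $\Phi^v_{\tau}$ maps the $\rho$-eigenspace distribution to the $\rho(\tau)$-eigenspace distribution — contradicting $p\in M^\circ$ by Lemma \ref{lem:propM0} (or rather by the density/analyticity argument already used there). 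Similarly $\rho(\gamma(\tau))\neq\sigma(\gamma(\tau))$ for all $\tau$: the two solutions $\rho,\sigma$ of \eqref{eq:rhoalongv} with distinct initial values stay distinct by uniqueness for the Riccati ODE, and $M'\to M'$ under the flow rules out the transition to $M^{\mathrm{sing}}$ from that side.

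I expect the main obstacle to be making rigorous the passage "$\Phi^v_\tau$ respects the eigenvalue/eigenspace structure of $A$" — i.e.\ checking that $\Phi^v_\tau$ genuinely maps eigenspaces of $A$ to eigenspaces of the pulled-back solution and that the eigenvalues evolve by \eqref{eq:rhoalongv}. This is morally Lemma \ref{lem:aboutL} applied to the one-parameter family $g_\tau=(\Phi^v_\tau)^*g$, $A_\tau=A(g,g_\tau)$, but one must be slightly careful because along $\gamma$ the metric $g$ itself is not pulled back to $g$, only the c-projective class is preserved; the constants $a(\tau),b(\tau),c(\tau),d(\tau)$ of the fractional-linear transformation are determined by the matrix $\begin{pmatrix}0&\alpha\\-1&\beta\end{pmatrix}$ of $\mathcal L_v$ through integrating the corresponding linear flow on $\mathcal S([\gnabla],J)$. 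Once this is in place, the invariance of $M^\circ$ reduces to the elementary observation that solutions of the Riccati equation \eqref{eq:rhoalongv} with distinct initial conditions never collide and a non-constant solution never has a critical point, which finishes the proof.
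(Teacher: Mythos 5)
Your first paragraph already contains the paper's entire proof: the lemma is an immediate consequence of Corollary \ref{cor:bols1}, since $(\Phi^v_\tau)^*g$ is c-projectively equivalent to $g$ and $M^\circ$ is a c-projective invariant. The subsequent Riccati-equation and eigenspace-propagation discussion is unnecessary for this statement (and is the less robust part of your argument, e.g.\ $v(\rho)=0$ at a point does not by itself force $d\rho=0$ there), so you could safely delete everything after the first paragraph.
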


\begin{proof} 
This follows immediately from Corollary \ref{cor:bols1}.
\end{proof}

We also need to explain why we are allowed to use {\it local}  coordinates  from Theorem \ref{thm:normalformwithv} to study the {\it global} behaviour of the function $\mathrm{Scal}(\gamma(\tau))$.  The situation we are dealing with can be described as follows.  We have a K\"ahler surface $(M,g,J)$ with a c-projective vector field $v$ and  a canonical model given by Theorem \ref{thm:normalformwithv}. We know that locally these two geometric objects are isomorphic, i.e., there exists a local isometry between them sending one c-projective vector field to the other. In order to be sure that the evolution of $\mathrm{Scal}$ along the corresponding flows is identical for these two manifolds,  we need to show that this local isometry can be prolonged along a trajectory of $v$ as long as we wish.  Let us prove this fact.   

Let $(M, g, J)$  and $(M',g',J')$ be two K\"ahler manifolds with c-projective vector fields $v$ on $M$ and $v'$ on $M'$. Consider two points $p\in M$ and $p'\in M'$ and the trajectories $\gamma(\tau)$ and $\gamma'(\tau)$  of $v$ and $v'$ respectively such that $p=\gamma(0)$ and $p'=\gamma'(0)$.
Assume that the both trajectories are defined for $\tau\in [0, T]$ and  there is a local isometry $F_0:  U(p) \to U'(p')$ between some neighbourhoods of these points which sends $v$ to $v'$, i.e. $dF(v)=v'$.  Then using the flows $\Phi^v_\tau$ and $\Phi^{v'}_\tau$ we can define a local isomorphism $F_\tau : V(q) \to V'(q')$ between some neighbourhoods of $q=\gamma(\tau)$ and $q'=\gamma'(\tau)$,  $\tau\in [0,T]$ by putting:
$$
F_\tau = \Phi^{v'}_\tau \circ F_0 \circ {\Phi^{v}_{-\tau}}.
$$

\begin{lem}
\label{lem:bols2} 
 $F_\tau$ is a local isometry. 
\end{lem}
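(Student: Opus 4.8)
The plan is to show that the property \textquotedblleft{}$F_\tau$ is a local isometry near $\gamma(\tau)$ with $dF_\tau\circ J=J'\circ dF_\tau$ and $dF_\tau\circ v=v'\circ F_\tau$\textquotedblright{} propagates along the trajectory from $\tau=0$ to $\tau=T$ by a connectedness argument on the interval $[0,T]$. The crucial mechanism is a local rigidity phenomenon: once $F_{\tau_0}$ has these properties near $\gamma(\tau_0)$, the maps $F_\tau$ for $\tau$ close to $\tau_0$ actually coincide with $F_{\tau_0}$ near $\gamma(\tau_0)$, and hence automatically inherit the properties. In particular, $F_\tau$ is nothing but the local isometry $F_0$ transported along $\gamma$ by the flows, and the fact that $v,v'$ are merely c-projective (not Killing) never enters, because we never push a metric forward by a flow of $v$.

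First I would record the standard fact that a diffeomorphism intertwining two vector fields intertwines their local flows: since $dF_0\circ v=v'\circ F_0$ on $U(p)$, for $x\in U(p)$ both curves $t\mapsto F_0(\Phi^v_t(x))$ and $t\mapsto \Phi^{v'}_t(F_0(x))$ are integral curves of $v'$ through $F_0(x)$, hence agree as long as $\Phi^v_t(x)$ stays in $U(p)$. Next one checks that $F_\tau=\Phi^{v'}_\tau\circ F_0\circ\Phi^v_{-\tau}$ really is defined on an open neighbourhood of $q=\gamma(\tau)$ for every $\tau\in[0,T]$: $\Phi^v_{-\tau}(q)=\gamma(0)=p$ lies in the interior of $U(p)$, and by continuity of the flow in its initial condition $\Phi^v_{-\tau}$ maps a neighbourhood of $q$ into $U(p)$; then $F_0$ is applied near $p'$ and $\Phi^{v'}_\tau$ near $\gamma'(\tau)$, where it is defined because $\gamma'$ is an integral curve of $v'$ on $[0,T]$.

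The heart of the argument is the rigidity step. Suppose that on a neighbourhood $W$ of $q_0=\gamma(\tau_0)$ the map $F_{\tau_0}$ is a local isometry with $F_{\tau_0}^*J'=J$ and $dF_{\tau_0}\circ v=v'\circ F_{\tau_0}$. By the one-parameter group property of the flows we have $F_\tau=\Phi^{v'}_{\tau-\tau_0}\circ F_{\tau_0}\circ\Phi^v_{\tau_0-\tau}$; and since $dF_{\tau_0}\circ v=v'\circ F_{\tau_0}$ on $W$, the map $F_{\tau_0}$ intertwines the flows near $q_0$ as above, so for $\tau$ close to $\tau_0$ one gets $F_\tau=\Phi^{v'}_{\tau-\tau_0}\circ\Phi^{v'}_{\tau_0-\tau}\circ F_{\tau_0}=F_{\tau_0}$ on a neighbourhood of $q_0$, which also contains $\gamma(\tau)$. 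Hence $F_\tau$ has the same three properties there. This shows the set $S\subseteq[0,T]$ of parameters for which $F_\tau$ is a local isometry near $\gamma(\tau)$ compatible with $J$ and sending $v$ to $v'$ is open; it is nonempty since $0\in S$ by the hypothesis on $F_0$; and it is closed, because if $\tau_k\to\tau_*$ with $\tau_k\in S$, then for $k$ large the same identity gives $F_{\tau_*}=F_{\tau_k}$ on a neighbourhood of $\gamma(\tau_k)$, and since $\gamma(\tau_*)=\Phi^v_{\tau_*-\tau_k}(\gamma(\tau_k))$ is carried by the flow into the domain of $F_{\tau_k}$, this forces $F_{\tau_*}$ to have the required properties near $\gamma(\tau_*)$ as well. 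As $[0,T]$ is connected, $S=[0,T]$, which proves the lemma.

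The only point requiring care is the book-keeping of domains when composing the locally defined flows $\Phi^v_{\pm\tau},\Phi^{v'}_{\pm\tau}$ and when passing from the identity $F_\tau=F_{\tau_0}$ \emph{near $\gamma(\tau_0)$} to a statement about a neighbourhood of the \emph{moving} point $\gamma(\tau)$; all of this is routine once one shrinks the neighbourhoods, uses compactness of $[0,T]$, and uses that $\gamma$ is an integral curve, so that the relevant base points are transported into the already-chosen domains by the flows. I do not expect any genuinely geometric difficulty beyond this.
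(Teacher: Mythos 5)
There is a genuine gap, and it sits exactly where the paper has to do real work. Your rigidity step is correct as far as it goes: if $F_{\tau_0}$ intertwines $v$ and $v'$ on a neighbourhood $W$ of $\gamma(\tau_0)$, then $F_\tau=F_{\tau_0}$ for nearby $\tau$ --- but only on the set of points whose $v$-flow segment of length $|\tau-\tau_0|$ stays inside $W$, and that set is contained in $W$. Hence the witnessing neighbourhoods in your induction are nested, $W'\subseteq W\subseteq\dots\subseteq U(p)$, and the requirement $\gamma(\tau)\in W'$ forces $\gamma([0,\tau])\subseteq U(p)$. Your argument therefore only certifies that $F_\tau$ coincides with $F_0$ on the part of $U(p)$ that flows back into $U(p)$; it gives nothing once the trajectory leaves $U(p)$, and prolonging the isometry \emph{beyond} $U(p)$ is the entire content of the lemma. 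The closedness step inherits the same defect: without a uniform lower bound on the size of the witnesses $W_k$ (which you do not have, and which in fact degenerates at the exit time of $\gamma$ from $U(p)$), you cannot pass to the limit $\tau_*$.

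Your sentence that ``the fact that $v,v'$ are merely c-projective never enters'' is the red flag. If that were so, the lemma would hold for arbitrary vector fields intertwined by $F_0$, and it does not: writing $F_\tau^*g'=(\Phi^v_{-\tau})^*F_0^*\bigl((\Phi^{v'}_\tau)^*g'\bigr)$, the flow $\Phi^{v'}_\tau$ transports $F_0$ into a region where $(\Phi^{v'}_\tau)^*g'$ need bear no relation to $g'$, so $F_\tau$ fails to be an isometry as soon as $v'$ is not Killing --- unless some additional rigidity intervenes. The paper supplies exactly that: $F$ is a composition of c-projective maps, hence c-projective, so $F^*g'$ is a K\"ahler metric c-projectively equivalent to $g$ on the whole tube $U_T(p)$; it coincides with $g$ on the open set $U(p)$; and since the metrisability equation \eqref{eq:main} is linear and of finite type, a compatible metric is determined by finitely many initial data at a single point, whence $F^*g'=g$ on all of $U_T(p)$. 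This unique-continuation step is what carries the isometry property past the boundary of $U(p)$, and it has no counterpart in your argument.
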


\begin{proof} Without loss of generality we can assume that the flows  $\Phi^{v}_\tau$ and $\Phi^{v'}_\tau$ are well defined for $\tau\in [0,T]$ on the neighbourhoods $U(p)$ and $U'(p')$ respectively (otherwise we just take some smaller neighbourhoods).   Consider the ``orbits'' of these neighbourhoods under the action of these ``partial'' flows
$$
U_T (p) = \bigcup_{\tau\in [0,T]} \Phi^{v}_\tau (U(p))\quad \mbox{and} \quad  U'_T (p') = \bigcup_{\tau\in [0,T]} \Phi^{v'}_\tau (U'(p')).
$$

For simplicity,  we may assume that $\gamma: [0,T]\to M$ is an embedding (i.e., $\gamma$ is not closed) and $U(p)$ is sufficiently small so that $U_T(p)$ is a regular tubular neighbourhood of $\gamma([0,T])$.
It is a standard fact in the theory of dynamical systems that all the maps $F_\tau$ agree and can be glued together into a single map $F: U_T (p)  \to U'_T (p')$ that can be though of as a prolongation of $F_0$ along the flow(s). So actually we are going to prove that $F$ is a local isometry (we still say ``local'',  as in general $F$ is not necessarily one-to-one,  this map can behave as a covering).  However the property we need to verify is local,  so without loss of generality we may assume that $F$ is a diffeomorphism.

Since $F$ is a composition of three c-projective maps (two flows of c-projective vector fields and one isometry),  $F$ is a c-projective map itself.  By taking the pullback of the K\"ahler structure $(g', J')$ from   $U'_T (p')$  to $U_T (p)$  we obtain a K\"ahler structure on $U_T (p)$  that is  c-projectively equivalent to $(g,J)$ and coincides with it on $U(p)$.  Since equations \eqref{eq:main} are of finite type, then the family of such K\"ahler structures has finite dimension and, moreover, each such structure is defined by finitely many initial conditions at one point, so  we conclude that $(g,J)$ coincides with the $F$-pullback of $(g', J')$ on the whole of $U_T(p)$ and therefore $F$ is an isometry. \end{proof}

After these preliminaries, we can give the proof of the Yano-Obata conjecture in the Liouville case L2 and degenerate case D2.

\subsubsection{The Liouville case L2}
Let us calculate the scalar curvature $\mathrm{Scal}$ of the metric  {$\hat{g}$ given by formula \eqref{eq:ghat1} in the local Liouville coordinates L2 from Theorem \ref{thm:normalformwithv}} and restrict it to the flow lines of the c-projective vector field $v$.  By Theorem \ref{thm:normalformwithv}, the evolution of the coordinates $(x,y)$ along the flow $\Phi^v_\tau$ of $v$ is given by $\Phi^v_\tau(x,y)=(x+\tau,y+\tau)$. Starting from an arbitrary point $p\in M^\circ$, we can assume without loss of generality that $p$ has first coordinates equal to $x=0,y=0$.   Note that this necessarily implies  {that the constants $c_1,c_2$ from the case L2 of Theorem \ref{thm:normalformwithv} satisfy $c_1-c_2\neq 0$.  
  After having calculated $\mathrm{Scal}$, the restriction $\mathrm{Scal}(\tau)=\mathrm{Scal}(\Phi^v_\tau(p))$ is given by replacing $x$ and $y$ in $\mathrm{Scal}$ with $\tau$ since $\mathrm{Scal}$ does not depend on the coordinates $s,t$ and their evolution along the integral curves of $v$.} We obtain 
\begin{align}
\mathrm{Scal}(\tau)=\frac{3\,e^{3\,\tau}}{d_1^2d_2^2(c_1-c_2)}\cdot\frac{C_0c^7+C_1c^6e^{(\beta-1)\tau}+...+C_2c^5e^{2(\beta-1)\tau}+...+C_7e^{7(\beta-1)\tau}}{D_0c^4+D_1c^3e^{(\beta-1)\tau}+...+D_4 e^{4(\beta-1)\tau}}\label{eq:scalalongv}
\end{align}
where the constants $C_0,...,C_7$ and $D_0,...,D_7$ are given by
\begin{align}
\begin{array}{ll}
C_0=&2\beta(\beta+\tfrac{1}{2})(d_1^2-d_2^2),\vspace{1mm}\\
C_1=&6(c_2d_2^2-c_1d_1^2)(\beta+\tfrac{1}{2})\beta+4(c_1d_2^2-c_2d_1^2)(\beta+2)\beta,\vspace{1mm}\\
C_2=&6(c_1^2d_1^2-c_2^2d_2^2)(\beta+\tfrac{1}{2})\beta+2(c_2^2d_1^2-c_1^2d_2^2)(1+7\beta+\beta^2)+12c_1c_2(d_1^2-d_2^2)(\beta+2)\beta,\vspace{1mm}\\
C_3=&2(c_2^3d_2^2-c_1^3d_1^2)(\beta+\tfrac{1}{2})\beta+8(c_1^3d_2^2-c_2^3d_1^2)(\beta+\tfrac{1}{2})+6c_1c_2(c_1d_2^2-c_2d_1^2)(1+7\beta+\beta^2)\vspace{1mm}\\&+12c_1c_2(c_2d_2^2-c_1d_1^2)(\beta+2)\beta,\vspace{1mm}\\
C_4=&(c_2^4d_1^2-c_1^4d_2^2)(\beta+2)+4c_1c_2(c_1^2d_1^2-c_2^2d_2^2)(\beta+2)\beta+6c_1^2c_2^2(d_1^2-d_2^2)(1+7\beta+\beta^2)\vspace{1mm}\\&+24c_1c_2(c_2^2d_1^2-c_1^2d_2^2)( \beta+\tfrac{1}{2}),\vspace{1mm}\\
C_5=&3c_1c_2(c_1^3d_2^2-c_2^3d_1^2)(\beta+2)+24c_1^2c_2^2(c_1d_2^2-c_2d_1^2)(\beta+\tfrac{1}{2})\vspace{1mm}\\&+2c_1^2c_2^2(c_2d_2^2-c_1d_1^2)(1+7\beta+\beta^2),\vspace{1mm}\\
C_6=&3c_1^2c_2^2(c_2^2d_1^2-c_1^2d_2^2)(\beta+2)+8c_1^3c_2^3(d_1^2-d_2^2)(\beta+\tfrac{1}{2}),\vspace{1mm}\\
C_7=&c_1^3c_2^3(c_1d_2^2-c_2d_1^2)(\beta+2)
\end{array}\label{eq:scalconstantsC}
\end{align}
and
$$
D_0=1,D_1=-2(c_1+c_2),D_2=2c_1c_2+(c_1+c_2)^2,D_3=-2c_1c_2(c_1+c_2),D_4=c_1^2c_2^2.
$$
In the case  $\beta-1>0$, we see from \eqref{eq:scalalongv} that the condition $|\lim_{\tau\to\infty}\mathrm{Scal}|<\infty$ implies 
$$
C_4=C_5=C_6=C_7=0.
$$
From \eqref{eq:scalconstantsC}, we see that there is no choice of $c_1,c_2,d_1,d_2,\beta$ such that this condition is fulfilled.

Let us suppose that $\beta-1<0$. From $|\lim_{\tau\to\infty}\mathrm{Scal}|<\infty$ it follows that $C_0=0$ that is, we either have $\beta=0$ or $\beta=-\tfrac{1}{2}$ or $d_1^2-d_2^2=0$.

\emph{Case $\beta=0$:} Note that the condition $|\lim_{\tau\to-\infty}\mathrm{Scal}|<\infty$ is already satisfied. The condition $|\lim_{\tau\to\infty}\mathrm{Scal}|<\infty$ implies that $C_1=C_2=0$. Since $\beta=0$, we have $C_1=0$ and $C_2=0$ is equivalent to $c_1^2d_2^2=c_2^2d_1^2$. Inserting the conditions
$$
\beta=0,\,\,\,c_1^2d_2^2=c_2^2d_1^2
$$
into \eqref{eq:scalalongv} gives $\mathrm{Scal}(\tau)=-\frac{6c_1^2}{d_1^2}.$

Inserting $\beta=0$ and $c_1^2d_2^2=c_2^2d_1^2$ into the formula for the metric $\hat g$, a direct calculation shows that $\hat g$ has constant holomorphic sectional curvature as we claimed.

\emph{Case $\beta=-\tfrac{1}{2}$:} The condition $|\lim_{\tau\to-\infty}\mathrm{Scal}|<\infty$ implies that $C_7=0$, thus $c_1d_2^2-c_2d_1^2=0$. From $|\lim_{\tau\to\infty}\mathrm{Scal}|<\infty$ we conclude $C_1=0$, which is already satisfied. Inserting the conditions
$$
\beta=-\tfrac{1}{2},\,\,\,c_1d_2^2-c_2d_1^2=0
$$
into \eqref{eq:scalalongv} gives $\mathrm{Scal}(\tau)=-\frac{27c_1c}{2d_1^2}.$ As above, we can insert $\beta=-\tfrac{1}{2}$ and $c_1d_2^2=c_2d_1^2$ into the metric $\hat g$ and calculate that it has constant holomorphic sectional curvature as we claimed.

\emph{Case $d_1^2=d_2^2$:} This case splits into three subcases according to the sign of $\beta$.

\emph{Subcase $\beta>0$:} The condition $|\lim_{\tau\to-\infty}\mathrm{Scal}|<\infty$ is satisfied. From $|\lim_{\tau\to\infty}\mathrm{Scal}|<\infty$ it follows that $C_1=C_2=C_3=0$. This cannot be fulfilled (for example, $C_1=0$ already implies $\beta=-\tfrac{5}{2}$).

\emph{Subcase $\beta=0$:} Recall from the previously investigated case that this implies $c_1^2d_2^2=c_2^2d_1^2$ which implies that $\hat g$ has constant holomorphic sectional curvature as we wanted to show. 

\emph{Subcase $\beta<0$:} The condition $|\lim_{\tau\to-\infty}\mathrm{Scal}|<\infty$ implies $C_7=0$  {from which we obtain $\beta=-2$. Inserting the conditions
$$\beta=-2,\,\,\,d_1^2=d_2^2$$
into \eqref{eq:scalalongv} shows that $\mathrm{Scal}(\tau)=-\frac{54c^2}{d_1^2}$ and inserting it into $\hat g$, a straight-forward calculation shows that $\hat g$ has constant holomorphic sectional curvature as we claimed.} 

This completes the proof of Theorem \ref{YOconjecture} in the Liouville case.

\subsubsection{The degenerate case D2}

We proceed analogous to the last subsection and calculate the scalar curvature $\mathrm{Scal}$ of the metric {$\hat{g}$ given by formula \eqref{eq:ghat3} in the coordinates from the case D2 of Theorem \ref{thm:normalformwithv}.}

\emph{Subcase $\beta=-2$:} In this case, we obtain that $\mathrm{Scal}(\tau)$ is given by
$$
-\frac{c^2}{c_1d_1^2(c-c_1 e^{-3\tau})}\cdot\left(\left(36+d_1^2\frac{\partial^2 G}{\partial u_2^2}\right)c_1^2e^{-3\tau}-\left(18+2d_1^2\frac{\partial^2 G}{\partial u_2^2}\right)c_1c+\left(-18+d_1^2\frac{\partial^2 G}{\partial u_2^2}\right)c^2e^{3\tau}\right).
$$
We clearly have $|\lim_{\tau\to-\infty}\mathrm{Scal}|<\infty$. On the other hand, the condition $|\lim_{\tau\to\infty}\mathrm{Scal}|<\infty$ implies that $G(u_2)$ has to satisfy the \textsc{ode}
$$
d_1^2\frac{\partial^2 G}{\partial u_2^2}=18.
$$
Using this, the scalar curvature takes the form $\mathrm{Scal}(\tau)=\frac{54c^2}{d_1^2}.$ Inserting the solution $G(u_2)=\tfrac{9}{d_1^2}u_2^2+d_2u_2+d_3$ to the above \textsc{ode} into the formula for the metric $\hat g$, we obtain after a straight-forward calculation that $\hat g$ has constant holomorphic sectional curvature as we wanted to show.

\emph{Subcase $\beta\neq-2$:} Here we obtain
\begin{align}
\mathrm{Scal}(\tau)=\frac{ce^{3\tau}}{c_1d_1^2G(u_2)^3}\cdot\frac{C_0c^4+C_1c_1c^3e^{(\beta-1)\tau}+C_2c_1^2c^2e^{2(\beta-1)\tau}+C_3c_1^3ce^{3(\beta-1)\tau}+c_1^4C_4e^{4(\beta-1)\tau}}{c^2-2cc_1e^{(\beta-1)\tau}+c_1^2e^{2(\beta-1)\tau}},\label{eq:Scaldegenerate}
\end{align}
where the constants $C_0,C_1,C_2,C_3,C_4$ are given by
\begin{align}
\begin{array}{ll}
C_0=&6\beta(\beta+\tfrac{1}{2})G(u_2)^3+d_1^2\left(\left(\frac{\partial G}{\partial u_2}\right)^2-G(u_2)\frac{\partial^2 G}{\partial u_2^2}\right),\vspace{1mm}\\
C_1=&-12\beta(\beta+2)G(u_2)^3-3d_1^2\left(\left(\frac{\partial G}{\partial u_2}\right)^2-G(u_2)\frac{\partial^2 G}{\partial u_2^2}\right),\vspace{1mm}\\
C_2=&6(1+7\beta+\beta^2)G(u_2)^3+3d_1^2\left(\left(\frac{\partial G}{\partial u_2}\right)^2-G(u_2)\frac{\partial^2 G}{\partial u_2^2}\right),\vspace{1mm}\\
C_3=&-24(\beta+\tfrac{1}{2})G(u_2)^3-d_1^2\left(\left(\frac{\partial G}{\partial u_2}\right)^2-G(u_2)\frac{\partial^2 G}{\partial u_2^2}\right),\vspace{1mm}\\
C_4=&3(\beta+2)G(u_2)^3.
\end{array}\nonumber
\end{align}
We can exclude the case $\beta-1>0$. Indeed, the condition $|\lim_{\tau\to\infty}\mathrm{Scal}|<\infty$ implies $C_2=C_3=C_4=0$ but since $\beta\neq -2$, we cannot have $C_4=0$.

Suppose that $\beta-1<0$. The condition $|\lim_{\tau\to\infty}\mathrm{Scal}|<\infty$ implies  $C_0=0$, thus, $G$ has to satisfy the \textsc{ode}
\begin{align}
\frac{\partial^2 G}{\partial u_2^2}=\frac{1}{G(u_2)}\left(\left(\frac{\partial G}{\partial u_2}\right)^2+\frac{6}{d_1^2}\beta(\beta+\tfrac{1}{2})G(u_2)^3\right).\label{eq:ODEforG}
\end{align}
\weg{In the cases $\beta=0$ or $\beta=-\tfrac{1}{2}$, its solution is
$$
G(u_2)=d_2e^{d_3u_2}
$$
while for $\beta\neq0$ and $\beta\neq-\tfrac{1}{2}$, we obtain
$$
G(u_2)=\frac{1}{12}\frac{\mathrm{tanh}^2\left(\frac{u_2+d_3}{2d_1d_2}\right)-1}{d_2^2\beta(\beta+\tfrac{1}{2})}.
$$
Here, $d_2,d_3$ are certain constants.}
Using \eqref{eq:ODEforG}, we can rewrite the constants $C_1,C_2,C_3$ in the form
\begin{align}
\begin{array}{ll}
C_1=&\beta(6\beta-15)G(u_2)^3,\vspace{1mm}\\
C_2=&-(12\beta^2-33\beta-6)G(u_2)^3,\vspace{1mm}\\
C_3=&(6\beta^2-21\beta-12)G(u_2)^3.
\end{array}\label{eq:simplifiedconstants}
\end{align}
Let us evaluate further the condition $|\lim_{\tau\to\infty}\mathrm{Scal}|<\infty$.

Suppose first that $\beta>0$. We see from \eqref{eq:Scaldegenerate} that this implies $C_1=C_2=C_3=0$. But the solution $\beta=\tfrac{15}{6}$ of $C_1=0$ is already excluded since we assumed $\beta-1<0$.

Next suppose that $\beta=0$. Then, we see from \eqref{eq:Scaldegenerate} that in order to have $|\lim_{\tau\to\infty}\mathrm{Scal}|<\infty$ satisfied, we must have $C_1=C_2=0$. But as we see from  {\eqref{eq:simplifiedconstants}, $C_2$ is not zero for $\beta=0$.} It follows that the case $\beta=0$ cannot occur.  

Now suppose that $-2<\beta<0$. From \eqref{eq:Scaldegenerate} we see that $|\lim_{\tau\to\infty}\mathrm{Scal}|<\infty$ implies $C_1=0$.  {By \eqref{eq:simplifiedconstants}, $C_1=0$ is not fulfilled for $-2<\beta<0$ which henceforth excludes this case.}

For $\beta<-2$, the condition $|\lim_{\tau\to\infty}\mathrm{Scal}|<\infty$ is automatically satisfied. On the other hand,  the condition $|\lim_{\tau\to-\infty}\mathrm{Scal}|<\infty$ implies for example that $C_4$ has to vanish which is not fulfilled.

Finally, we obtain that for no choice of parameters in the case $\beta\neq -2$, the scalar curvature $\mathrm{Scal}$ is bounded. It follows that this case cannot occur. This completes the proof of Theorem \ref{YOconjecture}.

\end{document}